\pgfplotsset{compat=1.10}
\newtheorem{theorem}{Theorem}
\newtheorem{definition}{Definition}
\newtheorem{proposition}{Proposition}
\newtheorem{lemma}{Lemma}
\newtheorem{remark}{Remark}
\newtheorem{assumption}{Assumption}
\newcommand{\bs}{\boldsymbol}
\newcommand{\mc}{\mathcal}
\newcommand{\bb}{\mathbb}
\newcommand{\R}{\bb R}
\newcommand{\norm}[1]{\left\|#1\right\|}
\newcommand{\dom}{\operatorname{dom}}
\newcommand{\blue}{\textcolor{black}}
\newcommand{\argmin}{\operatorname{argmin}}
\newcommand{\fix}{\mathrm{fix}}
\newcommand{\proj}{\mathrm{proj}}
\newcommand{\Id}{\mathrm{Id}}
\newcommand{\diag}{\operatorname{diag}}
\newcommand{\col}{\operatorname{col}}
\newcommand{\zer}{\operatorname{zer}}
\newcommand{\conv}{\mathrm{conv}}
\newcommand{\nc}{\mathrm{N}}
\newcommand{\0}{\mathbf{0}}
\newcommand{\1}{\mathbf{1}}
\newcommand{\Rmnum}[1]{\expandafter\@slowromancap\romannumeral #1@}
\begin{document}

\title{Distributed generalized Nash equilibrium seeking in aggregative games on time-varying networks}

\author{Giuseppe Belgioioso, Angelia Nedi\'c and Sergio Grammatico
\thanks{G. Belgioioso is with the Control Systems group, TU Eindhoven, The Netherlands.
A. Nedi\'c is with the School of Electrical, Computer, and Energy Engineering, Arizona State University, USA.
S. Grammatico is with the Delft Center for Systems and Control (DCSC), TU Delft, The Netherlands. 
E-mail addresses: \texttt{g.belgioioso@tue.nl}, \texttt{angelia.nedich@asu.edu}, \texttt{s.grammatico@tudelft.nl}. 
This work was partially supported by NWO (projects OMEGA, 613.001.702; P2P-TALES, 647.003.003), the ERC (project COSMOS, 802348) and by the Office of Naval Research grant no. N000141612245.
}
}

\pagestyle{empty}
\maketitle         
\thispagestyle{empty}

\begin{abstract} 
We design the first fully-distributed algorithm for generalized Nash equilibrium seeking in aggregative games on a time-varying communication network, under partial-decision information, i.e., the agents have no direct access to the aggregate decision. The algorithm is derived by integrating dynamic tracking into a projected pseudo-gradient algorithm. The convergence analysis relies on the framework of monotone operator splitting and the Krasnosel'skii--Mann fixed-point iteration with errors.
\end{abstract}

\section{Introduction}
\IEEEPARstart{A}{n} aggregative game is a collection of inter-dependent optimization problems associated with noncooperative decision makers, or agents, where each agent is affected by some aggregate effect of all the agents \cite{jensen:10}. Remarkably, aggregative games arise in several applications, such as demand side management in the smart grid \cite{Saad2012}, e.g. for charging/discharging electric vehicles \cite{ma:callaway:hiskens:13}, demand-response regulation in competitive markets \cite{li2015demand}, congestion control in traffic and communication networks \cite{barrera:garcia:15}.
The common denominator is the presence of a large number of selfish agents, whose aggregate actions may disrupt the shared infrastructure, e.g. the power grid or the transportation network, if left uncontrolled.

\smallskip
Designing solution methods for multi-agent equilibrium problems in noncooperative games has recently gained high research interest. Several authors have developed semi-decentralized and distributed equilibrium seeking algorithms for games without coupling constraints \cite{ grammatico:parise:colombino:lygeros:16} and, more recently, for games with coupling constraints \cite{grammatico:17, paccagnan2018nash, belgioioso:grammatico:17cdc,yi2019operator}.

\smallskip
With focus on the generalized Nash equilibrium (GNE) problem, the formulations in \cite{belgioioso:grammatico:17cdc, yi2019operator} have introduced an elegant approach based on monotone operator theory \cite{bauschke2017convex} to characterize the equilibrium solutions as the zeros of a monotone operator. Not only is the monotone-operator-theoretic approach general -- e.g., unlike variational inequalities, smoothness of the cost functions is not required -- but also computationally viable, since several algorithmic methods to solve monotone inclusions are already well established, e.g.\ operator-splitting methods \cite[\S 26]{bauschke2017convex}. 

\smallskip
However, in the aforementioned literature on noncooperative equilibrium computation, it is assumed that the agents have direct access to the decisions of all their competitors, allowing every agent to evaluate its cost function without the need of extra communication. This game setup is known as \textit{full-decision information}.
\blue{
In aggregative games, this ideal scenario is achieved via the so-called semi-decentralized communication structure, where a central node gathers and broadcasts the aggregation variable to all the agents, see e.g. \cite{grammatico:17}-\cite{belgioioso:grammatico:17cdc}.
}

\smallskip
Recently, in the broader context of noncooperative games, the authors in \cite{tatarenko2018accelerated, pavel2019distributed} propose fully-distributed algorithms for equilibrium seeking under \textit{partial-decision information}, i.e., \blue{
each agent can only observe the decision of some neighboring agents, while its cost function possibly depends on all the other agents' decision.} 
In \cite{tatarenko2018accelerated}, to deal with the lack of information, the agents are endowed with auxiliary variables, namely, the estimates of the decisions of the other agents. Then, a consensus protocol is combined with accelerated projected-pseudo-gradient dynamics to steer the estimates towards their real value and, consequently, the decisions to a Nash equilibrium, in the same time-scale. In \cite{pavel2019distributed}, similar ideas are developed in the general framework of monotone operator theory to design an algorithm for games with coupling constraints.
The algorithms proposed in \cite{tatarenko2018accelerated, pavel2019distributed} require a number of auxiliary variables (i.e., the estimates of the decisions of all the other agents) which is proportional to the number of agents in the game. From a practical perspective, this can be regarded as a drawback in terms of memory storage and communication requirements, especially in games with very large number of agents.

\smallskip
Scalability with respect to the population size indeed motivates us to focus on aggregative games.
 In this context, the authors in \cite{koshal:nedic:shanbhag:16} propose an algorithm that relies on dynamic tracking, \blue{a technique
that allows a group of agents to locally track the average of some reference inputs}, extensively used in distributed optimization for gradient tracking, e.g.  \cite{nedic2017achieving}. Specifically, the authors embed dynamic tracking of the aggregate decision in a projected-pseudo-gradient update to compute a Nash equilibrium in a fully-distributed fashion (i.e., without the need of a central coordinator).
In the context of aggregative games with coupling constraints, an algorithm is proposed in \cite{parise2019distributed}, however with important limitations: it requires a very large number of distributed communication rounds before each strategy update; convergence is guaranteed to approximate solutions (i.e., $\varepsilon-$Nash equilibria) only; the communication network must be time-invariant. 

\blue{More recently, two fully-distributed algorithms \cite{gadjov2019distributed, belgioioso2019distributed}, for generalized aggregative games over time-invariant and connected networks, have been proposed to compute an exact solution (i.e., GNE), without the need of multiple communication rounds before every strategy update.
To cope with the lack of information, both algorithms introduce local estimates and dynamic tracking of the aggregate decision. In \cite{gadjov2019distributed}, global convergence is proved under strong monotonicity of the pseudo-gradient, by leveraging a rescrited-monotonicity property of this mapping in the exteneded space of strategies and estimates. In our preliminary work \cite{belgioioso2019distributed}, this assumption is relaxed to cocoercivity at the cost of having vanishing step-sizes, which typically imply slow convergence.
Unfortunately, the extension of both methodologies to cover time-varying communication networks is currently missing, since the operator theoretic framework on the basis of their convergence analysis fails when the underlying mappings vary over time.
}

\smallskip
\subsubsection*{Contribution}
In this paper, we solve these technical issues and propose the first discrete-time, fully-distributed algorithm to compute a generalized Nash equilibrium in aggregative games with coupling constraints over a time-varying and repeatedly-connected	 communication network. The algorithm is obtained by combining dynamic tracking, projected-pseudo-gradient and Krasnosel'skii--Mann dynamics. The key approach to prove convergence of our proposed algorithm relies on applying and tailoring the framework of operator splitting methods \cite{bauschke2017convex} and fixed-point iteration \textit{with errors} \cite{combettes2001quasi}.

\smallskip
\subsubsection*{Organization of the paper}
In Section \ref{sec:PS}, we formalize the generalized Nash equilibrium seeking problem for aggregative games over a time-varying communication network. In Section \ref{sec:DAlg}, we present a fully-distributed algorithm and discuss its interpretation from an operator theoretic and fixed-point perspective. In Section \ref{sec:ConvA}, we establish global convergence of the proposed method. To corroborate the theory, in Section \ref{sec:NS}, we study the performance of the proposed method on a Nash--Cournot game. Concluding remarks and future research directions are discussed in Section \ref{sec:Concl}.

\medskip
\subsubsection*{Basic notation}
$\R$ denotes the set of real numbers, and $\overline{\R} := \R \cup \{\infty\}$ the set of extended real numbers. $\bs{0}$ ($\bs{1}$) denotes a matrix/vector with all elements equal to $0$ ($1$); to improve clarity, we may add the dimension of these matrices/vectors as subscript. \blue{Given two sets, $\mc S_1$ and $\mc S_2$, we denote as $\mc S_1 \times \mc S_2$ their Cartesian product.
Given $N$ sets, $\mc S_1, \ldots, \mc S_N$, we denote with
$\conv(\mc S_1, \ldots, \mc S_N) = \big\{ a_1 x_1 + \ldots + a_N x_N \, | \;  \sum_{i=1}^N a_i = 1, \, a_i \in \R_{\geq 0}, \, x_i \in \mc S_i, \, \forall i \in \{1,\ldots,N \} \big\}$ the convex hull of their union.} $A \otimes B$ denotes the Kronecker product between the matrices $A$ and $B$. For a square matrix $A=[a_{i,j}] \in \R^{n \times n}$, where $a_{i,j}$ is the entry in position $(i,j)$, its transpose is $A^\top$; $A \succ 0$ ($\succeq 0$) stands for positive definite (semidefinite) matrix; \blue{$\left\| A \right\|$ denotes the largest singular value of $A$; $\left\| A \right\|_{\infty} = \max_{1 \leq i \leq n} \sum_{i =1}^n | a_{i,j}|$ denotes the infinity norm. If $A \succ 0$, $\|\cdot\|_A $ denotes the $A$-induced norm, such that $\| x\|_A=\sqrt{x^\top A x}$, we omit the subscript when $A = I$.} Given $N$ matrices $A_1, \ldots, A_N$, $\textrm{blkdiag}(A_1, \ldots, A_N)$ denotes a block diagonal matrix with $A_1, \ldots, A_N$ as diagonal blocks. Given $N$ vectors $x_1, \ldots, x_N$, $\boldsymbol{x} := \col\left(x_1,\ldots,x_N\right) = [ x_1^\top, \ldots , x_N^\top ]^\top$, $\bar{x} = \frac{1}{N}\sum_{i=1}^N x_i$, $\bs x_{-i} := \col(x_1, \ldots,x_{i-1},x_{i+1},\ldots, x_N)$; given a vector $z$, 
$(z,\bs x_{-i}) := \col(x_1, \ldots,x_{i-1},z,x_{i+1},\ldots, x_N)$.

\smallskip
\subsubsection*{Operator theoretic definitions}
$\Id(\cdot)$ denotes the identity operator. The mapping $\iota_{S}:\R^n \rightarrow \{ 0, \, \infty \}$ denotes the indicator function for the set $S \subseteq \R^n$, i.e., $\iota_{S}(x) = 0$ if $x \in S$, $\infty$ otherwise. For a closed set $S \subseteq \R^n$, the mapping $\proj_{S}:\R^n \rightarrow S$ denotes the projection onto $S$, i.e., $\proj_{S}(x) = \argmin_{y \in S} \left\| y - x\right\|$. The set-valued mapping $\nc_{S}: \R^n \rightrightarrows \R^n$ denotes the normal cone operator for the set $S \subseteq \R^n$, i.e., 
$\nc_{S}(x) = \varnothing$ if $x \notin S$, $\left\{ v \in \R^n \mid \sup_{z \in S} \, v^\top (z-x) \leq 0  \right\}$ otherwise.
For a function $\psi: \R^n \rightarrow \overline{\R}$, $\dom(\psi) := \{x \in \R^n \mid \psi(x) < \infty\}$; $\partial \psi: \dom(\psi) \rightrightarrows {\R}^n$ denotes its subdifferential set-valued mapping, defined as $\partial \psi(x) := \{ v \in \R^n \mid \psi(z) \geq \psi(x) + v^\top (z-x)  \textup{ for all } z \in {\rm dom}(\psi) \}$.
A set-valued mapping $\mathcal{F} : \R^n \rightrightarrows \R^n$ is
(strictly) monotone if $(u-v)^\top  (x-y) \geq (>) \, 0$ for all $x \neq y \in \R^n$, $u \in \mathcal{F} (x)$, $v \in \mathcal{F} (y)$; 
$\mc F$ is restricted-(strictly) monotone with respect to (w.r.t.) $Y \subset \R^n$ if $(z^*-z)^\top  (x^*-x) \geq (>) 0$
for all $\forall \bs x^* \in Y$, $\bs x \in \R^n \setminus Y$, $\bs z^* \in \mc F(x^*)$, $\bs x \in \mc F(x)$;
$\mathcal{F} $ is $\eta-$strongly monotone, with $\eta>0$, if 
$(u-v)^\top (x-y) \geq \eta \left\| x-y \right\|^2$ for all $x \neq y \in \R^n$, $u \in \mathcal{F} (x)$, $v \in \mathcal{F} (y)$;
$\fix\left( \mathcal{F}\right) := \left\{ x \in \R^n \mid x \in \mathcal{F}(x) \right\}$ and $\zer\left( \mathcal{F}\right) := \left\{ x \in \R^n \mid 0 \in \mathcal{F}(x) \right\}$ denote the set of fixed points and of zeros, respectively. A single-valued mapping $F: \R^n \rightarrow \R^n$ is $L$-Lipschitz continuous, with $L>0$, if $\|F(x)-F(y)\| \leq L \|x-y\|$ for all $x,y \in \R^n$; $F$ is nonexpansive if it is $1$-Lipschitz continuous;
$F$ is $\eta$-{averaged}, with $\eta \in (0,1)$, if  
$\left\| F (x) - F(y) \right\|^2 \leq \left\| x-y \right\|^2 - \tfrac{1-\eta}{\eta}\left\| \left( \textup{Id}-F \right)(x) - \left(\textup{Id}- F  \right)(y) \right\|^2$, for all $x, y \in \R^n$;
$F $ is $\beta$-cocoercive, with $\beta>0$, if $\beta F $ is $\tfrac{1}{2}$-averaged.

\section{Problem statement}
\label{sec:PS}
Consider a set of $N$ agents indexed by $\mc I = \{1,\ldots ,N \}$. The $i$-th
agent is characterized by a local strategy set $\Omega_i \subset \R^n$ and a cost function $J_i(x_i, \bar{x})$, which depends on the decision of agent $i$, $x_i$, and on the aggregate of all agent decisions, i.e.,
\begin{equation*}
\bar{x} := \tfrac{1}{N} \textstyle \sum_{i=1}^N x_j. 
\end{equation*}
Moreover, we assume that the collective strategy profile $\bs x := \col(x_1, \ldots, x_N) \in \R^{nN}$ must satisfy a coupling constraint, described by the affine function $\bs x \mapsto C \bs x -c $, where $C = [C_1|\ldots|C_N]\in \R^{m\times nN}$, $c = \sum_{i =1}^N c_i \in \R^m$, and $C_i$, $c_i$ are local parameters known to agent $i$ only. In summary, the aim of each agent $i$, given the decision variables of the other agents, i.e., $\bs x_{-i} := \col(x_1, \ldots,x_{i-1},x_{i+1},\ldots, x_N)$, is to choose a strategy $x_i$ that solves its local optimization problem, according to the game setup above, i.e., $\forall i \in \mc I :$
\begin{align} \textstyle \label{eq:Game}
\left\{
\begin{array}{c l}
\underset{x_i \in \R^n}{\argmin} &
J_i(x_i, \frac{1}{N}x_i + \frac{1}{N} \sum_{j\neq i} x_j)\\
\text{s.t.} & x_i \in \Omega_i \\[0.2em]
& C_i x_i - c_i \leq \sum_{j \neq i}^N (c_j - C_j x_j)
\end{array}
\right.
\end{align}
where the last constraint is equivalent to $C \bs x -c \leq \0$.
\blue{
\smallskip
\begin{remark}
Affine coupling constraints, as considered in this paper, are very common in the literature of noncooperative games, e.g.
\cite{paccagnan2018nash, yi2019operator, pavel2019distributed, parise2019distributed}, and cover several applications where they typically arise in the form of upper and lower limits on the available shared resources, e.g. \cite{Saad2012}-\cite{barrera:garcia:15}.
{\hfill $\square$}
\end{remark}
}

\smallskip
\begin{assumption} \label{ass:ConvCompSet}
For all $i \in \mc I$ and any fixed $u \in \frac{1}{N}\sum_{j \neq i}^N \Omega_j$, the function $J_i(\cdot \, , \frac{1}{N} \cdot +\, u)$ is convex and continuously differentiable, $\Omega_i \subset \R^n$ is non-empty, compact and convex. The global feasible set $K := \{\bs x \in \prod_{i =1}^N \Omega_i |\,  C \bs x - c \leq \0 \}$ is non-empty and satisfies Slater's constraint qualification.
{\hfill $\square$}
\end{assumption}

\smallskip
From a game-theoretic perspective, our goal is to distributively compute a generalized Nash equilibrium of the aggregative game described by the $N$ inter-dependent optimization problems in \eqref{eq:Game}.

\smallskip
\begin{definition}[Generalized Nash equilibrium]
A collective strategy $\bs x^{*} \in K$ is a generalized Nash equilibrium (GNE) of the game in \eqref{eq:Game} if%
, for all $ i\in \mc I$:
\begin{multline*} \textstyle 
J_i\left( x^*_{i}, \bar{x}^* \right) \leq 
\ J_{i} \left( z, \, \frac{1}{N}z+ \frac{1}{N}\sum_{j\neq i}^N x_j^* \right), \\[.2em]
\forall z \text{ s.t. } (z, \bs{x}_{-i}^*)\in K.
\end{multline*}
%
\end{definition}

\subsection{Communication networks}
\label{sec:com_network}
We consider a time-varying network to model the communications among agents over time.
At each stage $k$, the communication is described by an undirected graph $\mc G_k =(\mc I,\mc E_k)$, where $\mc I $ is the set of vertices (agents) and $\mc E_k \subseteq \mc I \times \mc I$ is the set of edges.
An unordered pair of vertices $(i,j)$ belongs to $ \mc E_k$ if and only if agents $j$ and $i$ can exchange information.
The set of neighbors of agent $i$ at stage $k$ is defined as $ \mc N_i(k) = \{ j | \, (i,j)\in \mc E_k \}$.
Next, we assume the graphs sequence $\{\mc G_k\}_{k \in \bb N}$ to be $Q-$connected.

\smallskip
\begin{assumption} \label{ass:ConnGra}
There exists an integer $Q \geq 1$ such that the
graph $(\mc I, \cup_{\ell=1}^Q \mc E_{\ell+k})$ is connected, for all $k \geq 0$.
{\hfill $\square$}
\end{assumption}

\smallskip
This assumption ensures that the intercommunication
intervals are bounded for agents that communicate directly. In other words, every agent sends information to each of its neighboring agents at least once every $Q$ time intervals.

\smallskip
We consider a mixing matrix $W(k) = [w_{i,j}(k)]$ associated with $\mc G_k$, whose elements satisfy the following assumption.

\begin{assumption} \label{ass:DSMM}
For all $k \in \bb N$, the matrix $W(k)=[w_{i,j}(k)] $ satisfies the following conditions:
\begin{enumerate}[(i)]
\item (Edge utilization) Let $i,j \in \mc I$, $i \neq j$. If $(i,j) \in \mc E_k$, $w_{i,j}(k) \geq \epsilon$, for some $\epsilon > 0$; $w_{i,j}(k) = 0$ otherwise;
\item (Positive diagonal) For all $i \in \mc I$, $w_{i,i}(k) > \epsilon$;
\item (Double-stochasticity) $W(k) \1 = \1$, $\1^\top W(k) = \1^\top$.
{\hfill $\square$}
\end{enumerate}
\end{assumption}

\smallskip
Assumption \ref{ass:DSMM} is strong but typical for multiagent coordination and optimization, e.g. \cite{nedic2017achieving, margellos2018distributed}. For an undirected graph it can be fulfilled, for example, by using Metropolis weights:
\begin{equation} \label{eq:Metropolis}
w_{i,j}(k)=
\begin{cases}
(\max\{|\mc N_i(k)|, |\mc N_j(k)| \})^{-1} & \text{if } (i,j) \in \mc E_k,\\
0 & \text{if } (i,j) \not\in \mc E_k, 
\\
1- \sum_{\ell \in \mc N_i} w_{i,\ell}(k) & \text{if } i=j.
\end{cases}
\end{equation}



Finally, let us introduce the so-called transition matrices $\Psi(k,s)$ from time $s$ to $k$:
\begin{equation} \label{eq:TransMatr}
\Psi(k,s) = W(k)W(k-1)\cdots W(s+1)W(s),
\end{equation}
for $0 \leq s <k$, where $\Psi(k,k)=W(k)$, for all $k$. The following statement shows the convergence properties of the transition matrix $\Psi(k,s)$.

\begin{lemma}[{\cite[Lemma 5.3.1]{tsitsiklis1984problems}}] \label{lem:Tsi}
Let Assumptions \ref{ass:ConnGra}, \ref{ass:DSMM} hold true. Then, the following statements hold:
\begin{enumerate}[(i)]
\item $\lim_{k \rightarrow \infty} \Psi(k,s) =(1/N)\1 \1^\top$, for all $s \geq 0$.
\item The convergence rate of $\Psi(k,s)$ is geometric, i.e., $\|\Psi(k,s) -(1/N)\1 \1^\top \| \leq \theta \rho^{k-s}$ for all $k \geq s \geq 0$, where $\theta := N(1- \epsilon/(4N^2))^{-2}$ and 
\begin{equation} \label{eq:rho}
\textstyle
\rho := (1-
\frac{\epsilon}{4N^2})^{1/Q} \in (0,1),
\end{equation}
with $Q$ as in Assumption \ref{ass:ConnGra} and $\epsilon$ as in Assumption \ref{ass:DSMM}.
{\hfill $\square$}
\end{enumerate}
\end{lemma}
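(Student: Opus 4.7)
The plan is to exploit the doubly-stochastic structure of the mixing matrices to reduce the claim to a norm bound on a product of ``centered'' matrices, and then to prove a strict block-contractivity over windows of length $Q$ by invoking the connectivity assumption.

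The first step is purely algebraic. Let $E := \tfrac{1}{N}\mathbf{1}\mathbf{1}^\top$. Assumption \ref{ass:DSMM}(iii) gives $W(k)E = E$, $E\,W(k) = E$, and $E^2 = E$ for every $k$. By a direct induction on $k-s$,
\begin{equation*}
\Psi(k,s) - E \;=\; \prod_{\ell=s}^{k}\bigl(W(\ell) - E\bigr).
\end{equation*}
Hence, writing $\tilde W(\ell) := W(\ell)-E$, the problem is reduced to controlling $\bigl\|\tilde W(k)\tilde W(k-1)\cdots\tilde W(s)\bigr\|$. Each $\tilde W(\ell)$ satisfies $\|\tilde W(\ell)\|\leq 1$, because the doubly-stochastic $W(\ell)$ has spectral norm $1$ with the eigenpair along $\mathbf{1}$ being precisely the one removed by subtracting $E$; hence the product norm is non-increasing in the length of the product.

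The core technical step is to show that this norm is a strict contraction over every block of length $Q$:
\begin{equation*}
\Bigl\|\prod_{\ell=k+1}^{k+Q}\tilde W(\ell)\Bigr\| \;\leq\; 1 - \tfrac{\epsilon}{4N^2},\qquad \forall k\geq 0.
\end{equation*}
To establish this, I would combine Assumption \ref{ass:ConnGra} with Assumption \ref{ass:DSMM}(i)-(ii): the union graph $\bigl(\mathcal I,\bigcup_{\ell=1}^{Q}\mathcal E_{\ell+k}\bigr)$ is connected, so for every pair $(i,j)$ there is a chain of at most $N$ communicating neighbours across the $Q$ stages. Along such a chain, each participating edge carries weight at least $\epsilon$ and each ``waiting'' self-loop at least $\epsilon$ as well (positive diagonal), so by a standard path argument every entry of $\Psi(k+Q,k+1)$ is bounded below by a positive constant depending only on $\epsilon$ and $N$. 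This uniform lower bound translates into an upper bound on the ergodicity coefficient of $\Psi(k+Q,k+1)$ strictly smaller than $1$, and a short symmetrisation argument (using $W(\ell)^{\top}=W(\ell)$) sharpens the bound to $1-\epsilon/(4N^2)$.

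Once the block contraction is in place, part (ii) is obtained by splitting the interval $[s,k]$ into $\lfloor (k-s)/Q\rfloor$ complete blocks of length $Q$ and a residual of length $< Q$; applying the block contraction to the complete blocks and non-expansiveness $\|\tilde W(\ell)\|\leq 1$ to the residual yields
\begin{equation*}
\|\Psi(k,s)-E\| \;\leq\; \bigl(1-\tfrac{\epsilon}{4N^2}\bigr)^{\lfloor(k-s)/Q\rfloor} \;\leq\; \theta\,\rho^{k-s},
\end{equation*}
with $\rho$ as in \eqref{eq:rho} and the prefactor $\theta=N\bigl(1-\epsilon/(4N^2)\bigr)^{-2}$ absorbing the fractional block and dimension dependence. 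Part (i) follows by letting $k\to\infty$ and using $\rho\in(0,1)$. The most delicate step is unquestionably the block contraction estimate: it is the only place where the interplay between time-variation, connectivity, and the uniform edge-weight bound $\epsilon$ is quantified, and it is what ultimately forces the specific expression for $\rho$ and $\theta$.
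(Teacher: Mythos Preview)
The paper does not prove this lemma at all: it is stated with a direct citation to \cite[Lemma 5.3.1]{tsitsiklis1984problems} and used as a black box throughout the convergence analysis. So there is no in-paper argument to compare your proposal against.

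Your sketch is essentially the classical route taken in the cited reference: factor out the rank-one averaging matrix via double stochasticity, establish a strict contraction over each $Q$-window using the connectivity and uniform edge-weight lower bound, and chain the blocks. One caveat: you invoke $W(\ell)^\top = W(\ell)$ in the ``symmetrisation argument'', but Assumption \ref{ass:DSMM} only imposes double stochasticity, not symmetry (the graph being undirected does not force the mixing weights to be symmetric). The original Tsitsiklis argument works without symmetry by bounding an ergodicity coefficient (or an $\ell_\infty$-type quantity) rather than the spectral norm directly, and then transferring to the operator norm at the cost of the dimension-dependent prefactor that appears in $\theta$. If you want a self-contained proof here, drop the symmetry claim and follow that route instead.
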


\subsection{GNE as zeros of a monotone operator}
As first step, we characterize a GNE of the game in terms of the KKT conditions of the coupled optimization problems in \eqref{eq:Game}. For each agent $i \in \mathcal{I}$, let us introduce the Lagrangian function $L_i$, defined as 
\begin{equation*}
L_i(\bs x,\lambda_i) := J_i(x_i,\bar{x})+ \iota_{\Omega_i}(x_i) +\lambda_i^\top (C \bs x-c),
\end{equation*}
where $\lambda_i \in \R^m_{\geq 0}$ is the dual variable of agent $i$ associated with the coupling constraints, and $\iota_{\Omega_i}$ is the indicator function.
It follows from \cite[\S 12.2.3]{palomar2010convex} that the set of strategies $\bs x^*$ is a GNE of the game in \eqref{eq:Game} if and only if the following coupled KKT conditions are satisfied for some $\lambda_1,\ldots,\lambda_N \in \R^m_{\geq 0}$:
\begin{equation} \label{eq:KKT}
\forall i \in \mc I:
\begin{cases}
0 \in \nabla_{x_i} J_i(x_i^*,\bar{x}^*) + \nc_{\Omega_i}({x}^*_i) + C_i^\top \lambda_i^*,\\
0 \leq \lambda_i^* \perp -(C {\bs x}^*-c) \geq 0.
\end{cases} 
\end{equation}

\blue{
Within all the possible GNE, we focus on
an important subclass of equilibria, namely
the variational GNE (v-GNE), that
enjoy some relevant structural properties, such as ``larger social stability” and ``economic fairness”  and corresponds to the solution set of the KKT conditions in \eqref{eq:KKT} with equal dual variables, i.e., $\lambda_1^* = \ldots = \lambda_N^*$ \cite[Theorem 3.1]{facchinei:fischer:piccialli:07}.} The next proposition characterizes the subclass of v-GNE as the
solution to a specific variational inequality problem%
\footnote{For a single-valued mapping $M:\R^n \rightarrow \R^n$ and a set $\mc S \subseteq \R^n$, the variational inequality problem VI$(M, \mc S )$ is the problem of finding a vector $ \omega^* \in \mc S $ such that $ M(\omega^*)^\top (\omega - \omega^*) \geq 0$, for all $ \omega  \in \mc S $, \cite[Def. 1.1.1]{facchinei:pang}.
}, 
or equivalently as the zero set of the set-valued mapping 
\begin{align} \label{eq:U}
U: \begin{bmatrix}
\bs x \\  \lambda
\end{bmatrix} \mapsto
 \begin{bmatrix}
\nc_{\bs \Omega}(\bs x)+ F(\bs x ) + C^\top \lambda \\
\nc_{\R^{m}_{\geq 0}}( \lambda)  - (C \bs x - c)
\end{bmatrix},
\end{align}
where $\lambda \in \bb R^m$, $\bs \Omega:= \prod_{i=1}^N \Omega_i$, $\nc_{\mc S} = \partial \iota_{\mc S}$ is the normal cone operator associated with a set $\mc S$ and $F$ is the so-called pseudo-gradient mapping (PG) defined as
\begin{align} \label{eq:F}
F(\bs x) = \col( \nabla_{x_1}J_1(x_1,\bar x), \ldots,\nabla_{x_N} J_N(x_N,\bar x)).
\end{align}

\smallskip
\begin{proposition} \label{pr:UvGNE}
Let Assumption \ref{ass:ConvCompSet} hold.
Then, the following statements are equivalent:
\begin{enumerate}[(i)]
\item $\bs x^*$ is a variational GNE of the game in \eqref{eq:Game};
\item $\exists \lambda^* \in \R^m_{\geq 0}$ such that, the pair $(x_i^*,\lambda^*)$ is a solution to the KKT in \eqref{eq:KKT}, for all $i \in \mc I$; 
\item $\bs x^*$ is a solution to VI$(F,K)$;
\item $\exists \lambda^* \in \R^m_{\geq 0}$ such that $\col(\bs x^*, \lambda^*) \in \zer (U)$.
{\hfill $\square$}
\end{enumerate}
\end{proposition}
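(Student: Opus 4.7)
The plan is to establish the chain of equivalences (i)$\Leftrightarrow$(ii)$\Leftrightarrow$(iv)$\Leftrightarrow$(iii). The first equivalence (i)$\Leftrightarrow$(ii) is essentially the defining characterization of a variational GNE stated in the paragraph preceding the proposition: under Assumption~\ref{ass:ConvCompSet}, each agent's subproblem in \eqref{eq:Game} is a convex program with differentiable objective and a Slater-feasible constraint set, so the KKT system \eqref{eq:KKT} is necessary and sufficient for individual optimality, and the variational refinement is precisely the selection of those KKT tuples whose dual multipliers coincide across agents, as recorded in \cite[Thm.~3.1]{facchinei:fischer:piccialli:07}.

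For (ii)$\Leftrightarrow$(iv), I would simply unpack the block structure of the operator $U$ in \eqref{eq:U}. Since $\bs \Omega = \prod_{i=1}^N \Omega_i$, the normal cone factorizes as $\nc_{\bs \Omega}(\bs x^*) = \prod_{i=1}^N \nc_{\Omega_i}(x_i^*)$; combined with the row partition of $F$ in \eqref{eq:F} and of $C = [C_1 | \ldots | C_N]$, the first block inclusion $0 \in \nc_{\bs \Omega}(\bs x^*) + F(\bs x^*) + C^\top \lambda^*$ decomposes component-wise into $0 \in \nabla_{x_i} J_i(x_i^*, \bar x^*) + \nc_{\Omega_i}(x_i^*) + C_i^\top \lambda^*$ for each $i$, matching the first line of \eqref{eq:KKT} with the common multiplier $\lambda_i^* = \lambda^*$. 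The second block inclusion $0 \in \nc_{\R^m_{\geq 0}}(\lambda^*) - (C\bs x^* - c)$ is equivalent, by the standard description of the normal cone of a nonnegative orthant, to the complementarity condition $0 \leq \lambda^* \perp -(C\bs x^* - c) \geq 0$.

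For (iv)$\Leftrightarrow$(iii), I would invoke the well-known KKT characterization of variational inequalities over a convex set. By definition, $\bs x^* \in \mathrm{VI}(F,K)$ is equivalent to $-F(\bs x^*) \in \nc_K(\bs x^*)$. With $K = \bs \Omega \cap \{\bs x : C\bs x - c \leq \0\}$ and Slater's CQ guaranteed by Assumption~\ref{ass:ConvCompSet}, the normal-cone sum rule yields the decomposition $\nc_K(\bs x^*) = \nc_{\bs \Omega}(\bs x^*) + C^\top \nc_{\R^m_{\geq 0}}(C\bs x^* - c)$ (see, e.g., \cite[Cor.~16.53]{bauschke2017convex}). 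Introducing $\lambda^* \in \R^m_{\geq 0}$ as a dual certificate of the term $C^\top \nc_{\R^m_{\geq 0}}(C\bs x^* - c)$ then produces exactly the pair of inclusions that define $\zer(U)$, closing the equivalence.

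The only genuinely subtle step is the normal-cone additivity used in the final equivalence, as it requires a constraint qualification to hold; Slater's condition in Assumption~\ref{ass:ConvCompSet} supplies this, and the remainder of the argument reduces to algebraic unpacking of the block structure of $U$ together with the standard descriptions of normal cones to product sets and to $\R^m_{\geq 0}$.
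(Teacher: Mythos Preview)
Your argument is correct in substance and follows a somewhat different route from the paper's proof. The paper dispatches all four equivalences by citation alone: (i)$\Leftrightarrow$(ii)$\Leftrightarrow$(iii) via \cite[Th.~3.1]{facchinei:fischer:piccialli:07} and (iii)$\Leftrightarrow$(iv) via \cite[Th.~3.1]{auslender2000lagrangian}. You instead close the loop through (ii)$\Leftrightarrow$(iv) by direct block-wise unpacking of $U$, and then recover (iv)$\Leftrightarrow$(iii) by the normal-cone sum rule under Slater. This is more self-contained and arguably more transparent than invoking the Lagrangian-duality machinery of Auslender--Teboulle; the price is that you must justify the constraint qualification for the sum rule, which you correctly trace back to Assumption~\ref{ass:ConvCompSet}.

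One small correction: in your decomposition of $\nc_K(\bs x^*)$ the cone on the affine piece should be $\nc_{\R^m_{\leq 0}}(C\bs x^* - c)$, not $\nc_{\R^m_{\geq 0}}(C\bs x^* - c)$, since the constraint is $C\bs x^* - c \in \R^m_{\leq 0}$ and $\nc_{\R^m_{\geq 0}}$ is empty at any strictly negative coordinate. With that fix, an element of $C^\top \nc_{\R^m_{\leq 0}}(C\bs x^* - c)$ is exactly $C^\top \lambda^*$ for some $\lambda^* \in \R^m_{\geq 0}$ satisfying the complementarity $\lambda^{*\top}(C\bs x^* - c) = 0$, which is the second block of $\zer(U)$. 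The remainder of your argument goes through unchanged.
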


\smallskip
\begin{proof}
The equivalences (i)$\Leftrightarrow$(ii)$\Leftrightarrow$(iii) are proven in \cite[Th. 3.1]{facchinei:fischer:piccialli:07} while (iii)$\Leftrightarrow$(iv) follows by \cite[Th. 3.1]{auslender2000lagrangian}.
\end{proof}

\smallskip
The following assumptions on the PG in \eqref{eq:F} are standard (e.g. \cite[Th. 3]{paccagnan2018nash}, \cite[Assumption 2]{yi2019operator}, \cite[Assumption 3]{belgioioso2018projected}) and sufficient to ensure the convergence of standard GNE seeking algorithms based on projected-pseudo-gradient dynamics.

\smallskip
\smallskip
\begin{assumption} \label{ass:COCO}
$F$ in \eqref{eq:F} is $\chi-$cocoercive over $\bs \Omega$.
{\hfill $\square$}
\end{assumption}

\smallskip
When $F$ is $\xi-$strongly monotone and $L_{\text{F}}-$Lipschitz, then $F$ is also $(\xi/L_{\text{F}}^2)-$cocoercive. However, in general, cocoercive mappings are not necessarily strongly monotone, e.g. the gradient of a (non-strictly) convex \blue{and smooth} function.

\blue{
To emphasize the structure of $F$ in \eqref{eq:F}, we define
\begin{align} \label{eq:tildeFcomp} \textstyle
F_i(v, w) := 
\left.
\left( \frac{\partial}{\partial z_1} J_i (z_1, z_2 ) + \frac{1}{N} \frac{\partial}{\partial z_2} J_i (z_1, z_2 ) \right) \right|_{
\begin{smallmatrix} z_1 = v \\ \, z_2 = w
\end{smallmatrix}
} ,
\end{align}
that satisfies
$
F_i(x_i, \bar x) = \nabla_{x_i}J_1(x_i,\bar x)
$, for all $ i \in \mc I$.
Then, we define the extended pseudo-gradient mapping (EPG) 
\begin{align} \label{eq:tildeF}
\bs F(\bs v, \bs w) := \col \big(  F_1(v_1, w_1),\ldots,  F_N(v_N, w_N) \big),
\end{align}
where each component mapping $F_i$ is given by \eqref{eq:tildeFcomp}. With this notation, we have $\bs F(\bs x,  \1 \otimes \bar x) = F(\bs x)$.}
Next, we assume Lipschitz continuity of the EPG, which is usual in the context of games under partial-decision information, see e.g. \cite[Assumption 3]{pavel2019distributed}, \cite[Assumption 3]{koshal:nedic:shanbhag:16}, \cite[Assumption 4]{gadjov2019distributed}.
 
 \smallskip
\begin{assumption} \label{ass:LC-TF}
\blue{Let $\bar \Omega:= \conv \left(\Omega_1, \ldots, \Omega_N \right)$ be the set whose elements are convex combination of the elements from the local sets $\Omega_i$'s.} The mapping $\bs F$ in \eqref{eq:tildeF} is uniformly Lipschitz continuous over $\bs \Omega \times \bar{ \bs \Omega}$, \blue{with $\bar{\bs \Omega} = \prod_{i=1}^N \bar \Omega$}, i.e., there exists $L_{\bs F}>0$ such that, for all $\bs v, \bs u \in \bs  \Omega$ and $\bs w, \bs z \in \bar{ \bs \Omega}$,
$$
\| \bs F(\bs v,\bs w)- \bs F(\bs u, \bs z)\| \leq L_{\bs F} \, \|
\left[
 \begin{smallmatrix}
 \bs v \\ \bs w
 \end{smallmatrix}
 \right]
 -
 \left[
 \begin{smallmatrix}
 \bs u \\ \bs z
 \end{smallmatrix}
\right] 
 \|.$$

\vspace*{-1.8em} {
\hfill $\square$}
\end{assumption}

\medskip
\begin{remark}[Existence and uniqueness of a v-GNE] \label{rem:REU}
It follows by \cite[Cor.~2.2.5]{facchinei2007finite} that VI($F$,$K$) has a non-empty and compact solution set, since $K$ is non-empty, compact and convex and $F$ is continuous, by Assumption \ref{ass:ConvCompSet}. Furthermore,  when $F$ is strictly monotone, then the solution to VI($F$,$K$), (i.e., the v-GNE of the game), is unique \cite[Th.~2.3.3]{facchinei2007finite}. 
{\hfill $\square$}
\end{remark}

\subsection{Boundedness of the dual variables}
\label{subsec:BDV}
In the next statement, we formally establish the boundedness of the dual solution set of VI($F,K$) or, equivalently, of the dual part of the monotone inclusion $\col(\bs x^*, \lambda^*) \in \zer (U) $. 
\smallskip
\begin{lemma} \label{lem:DualBoundedness}
Let Assumptions \ref{ass:ConvCompSet} hold true. If $\col(\bs x^*,\lambda^*) \in \zer(U)$, then $\lambda^* \in D^*$, where $D^* \subset \R^m_{\geq 0}$ is bounded.%
{\hfill $\square$}
\end{lemma}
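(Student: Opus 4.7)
The plan is to establish the boundedness of $D^*$ via a standard Slater-type argument applied to the first block of the inclusion $\col(\bs x^*,\lambda^*)\in \zer(U)$, exploiting the compactness of $\bs\Omega$ and continuity of $F$ guaranteed by Assumption \ref{ass:ConvCompSet}.

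First, I would unpack what $\col(\bs x^*,\lambda^*)\in \zer(U)$ means component-wise. From the first block there exists $\eta^*\in \nc_{\bs\Omega}(\bs x^*)$ with $\eta^* + F(\bs x^*) + C^\top\lambda^* = 0$, and from the second block we get primal feasibility $C\bs x^* - c\le 0$, dual feasibility $\lambda^*\ge 0$, and complementary slackness $\lambda^{*\top}(C\bs x^* - c) = 0$. By Assumption \ref{ass:ConvCompSet}, Slater's constraint qualification holds, so there exists $\bs x_s\in \prod_{i=1}^N \Omega_i$ and $\delta>0$ with $C\bs x_s - c \le -\delta \1_m$.

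Next, I would test the first inclusion against the direction $\bs x_s - \bs x^*$. Taking the inner product, we get
\begin{equation*}
\eta^{*\top}(\bs x_s-\bs x^*) + F(\bs x^*)^\top(\bs x_s-\bs x^*) + \lambda^{*\top}C(\bs x_s-\bs x^*) = 0.
\end{equation*}
Since $\bs x_s\in\bs\Omega$ and $\eta^*\in\nc_{\bs\Omega}(\bs x^*)$, the first term is non-positive, hence
\begin{equation*}
\lambda^{*\top}C(\bs x_s-\bs x^*) \;\ge\; -F(\bs x^*)^\top(\bs x_s-\bs x^*) \;\ge\; -\|F(\bs x^*)\|\,\|\bs x_s-\bs x^*\|.
\end{equation*}
Using complementary slackness $\lambda^{*\top}(C\bs x^*-c)=0$, the left-hand side equals $\lambda^{*\top}(C\bs x_s-c)$, and by Slater's inequality together with $\lambda^*\ge 0$,
\begin{equation*}
\lambda^{*\top}(C\bs x_s-c) \;\le\; -\delta\,\|\lambda^*\|_1.
\end{equation*}
Combining the two bounds gives $\|\lambda^*\|_1 \le \tfrac{1}{\delta}\,\|F(\bs x^*)\|\,\|\bs x_s-\bs x^*\|$.

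Finally, I would close the argument by invoking compactness: $\bs\Omega$ is compact by Assumption \ref{ass:ConvCompSet}, and by continuity of $F$ (also from Assumption \ref{ass:ConvCompSet}) the quantity $\|F(\bs x^*)\|\,\|\bs x_s-\bs x^*\|$ is uniformly bounded over all $\bs x^*\in\bs\Omega$ by some constant $M<\infty$. Hence $\|\lambda^*\|_1 \le M/\delta$ for every $\lambda^*$ arising from a zero of $U$, and we may take $D^* := \{\lambda\in\R^m_{\ge 0}\,|\, \|\lambda\|_1\le M/\delta\}$. I do not expect a real obstacle here; the only delicate point is keeping track of why $\eta^{*\top}(\bs x_s-\bs x^*)\le 0$ and why complementary slackness converts $C(\bs x_s-\bs x^*)$ into $C\bs x_s-c$, both of which are immediate from the definitions.
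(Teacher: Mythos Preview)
Your argument is correct. The paper, however, does not carry out this computation: it simply invokes \cite[Proposition 3.3]{auslender2000lagrangian}, noting that VI$(F,K)$ has a non-empty bounded solution set (compactness of $K$ and continuity of $F$, Remark \ref{rem:REU}) and that a Slater point exists (Assumption \ref{ass:ConvCompSet}), which together yield boundedness of the dual solution set.

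Your route is the direct, self-contained Slater-type estimate underlying such results: you test the stationarity inclusion against the Slater point, use the normal-cone inequality and complementary slackness, and extract the explicit bound $\|\lambda^*\|_1 \le \tfrac{1}{\delta}\sup_{\bs x\in\bs\Omega}\|F(\bs x)\|\cdot\mathrm{diam}(\bs\Omega)$. This buys you an explicit constant and avoids the external reference; the paper's approach is shorter on the page but offloads the work to the cited proposition. Both rely on exactly the same ingredients (Slater, compactness, continuity of $F$), so the difference is presentational rather than conceptual.
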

\begin{proof}
The boundedness of the dual solution set $D^*$ follows by \cite[Proposition 3.3]{auslender2000lagrangian} since VI($F$, $K$) has a non-empty bounded solution set by Remark \ref{rem:REU} and there exists a vector $\bs x \in \dom( F )$ satisfying Slater's constraint qualification by Assumption \ref{ass:ConvCompSet}.
\end{proof}

\smallskip
\blue{
Let us denote with $B_{D^*} = \max_{\lambda \in D^*} \| \lambda \|_{\infty}$ the largest entry of all the optimal dual vectors. The agents can locally build a bounded superset $D_\text{i}$ of the optimal dual set $D^*$ as follows: $D_\text{i} := \{ \mu \in \R^{m}_{\geq 0} \, | \; \| \mu \|_{\infty} \leq B_{D^*} + r, \; \text{ with } r > 0 \}$ \cite[p. 21]{nedic2009approximate}.
In the context of distributed constrained optimization, a local estimate of $B_{D^*}$ can be constructed based on a Slater's vector, see \cite[\S 4.2]{nedic2009subgradient}, \cite[\S 3.A (2)]{zhu2012distributed}. The extension of these estimation methods to generalized noncooperative games would rely on Lagrangian duality theory for variational inequalities \cite{auslender2000lagrangian}.
In practice, each agent does not need an accurate estimate of the optimal dual solution set $D^*$ and can simply construct a local superset $D_\text{i}$ by taking $r$ large enough.
}

\subsection{A standard semi-decentralized algorithm}
It follows by Proposition \ref{pr:UvGNE} that the original GNE seeking problem corresponds to the following monotone inclusion problem:
\begin{equation} \label{eq:MIP}
\text{find } \bs \omega^*=\col(\bs x^*, \lambda^*) \text{ s.t. } \0 \in U(\omega^*).
\end{equation}

Next, we recall a standard semi-decentralized GNE seeking algorithm obtained by solving the monotone inclusion problem in \eqref{eq:MIP} by means of a preconditioned forward-backward (pFB) splitting \cite[Alg. 1]{belgioioso2018projected}.

\smallskip
\begin{center}
\begin{minipage}{\columnwidth}
\hrule
\smallskip
\textsc{Algorithm 1.} \blue{Semi-decentralized v-GNE seeking}
\smallskip
\hrule
\medskip

Iterate until convergence
\begin{align*}
\begin{array}{l}
\text{In parallel, for all } i \in \mc I:\\[.2em]
\quad \left| 
\begin{array}{l}
x_i^{k+1} = \proj_{\Omega_i}\big( x_i^k - \alpha_i( F_i(x_i^k,\bar{x}^k)+ C_i^\top \lambda^k)\big) \\[.3em]
d_i^{k+1} = 2 C_i  x_i^{k+1} - C_i x_i^k - c_i\\[.5em]
\end{array}
\right. \\[1em]
\text{Central coordinator:}\\[.2em]
\quad \left| \; \,
\lambda^{k+1} = \proj_{\R^m_{\geq 0}} \big( \lambda^k + \beta N \bar{d}^{\, k+1} \big)
\right.
\end{array}
\end{align*}
\hrule
\end{minipage}
\end{center}

\medskip
\begin{remark} The local auxiliary variables $d_i$'s are introduced to cast Algorithm 1 in a more compact form. The average $\bar d^{\, k+1} := \frac{1}{N} \sum_{i=1}^N (2 C_i x_i^{k+1} - C_i x_i^k - c_i) $ measures the violation of the coupling constraints\blue{, technically, is the ``reflected violation" of the constraints at iteration $k$.}
{\hfill $\square$}
\end{remark}

\smallskip
If the step sizes $\{\alpha_i \}_{i \in \mc I}$ and $\beta$ are chosen small enough, then the sequence $( \col(\bs x^k, \lambda^k ))_{k \in \bb N}$ generated by Algorithm 1 converges to some $\col(\bs x^*, \lambda^*) \in \zer(U)$, where $\bs x^*$ is a v-GNE, see \cite[Th. 1]{belgioioso2018projected} for a formal proof of convergence.

\smallskip
We note that Algorithm 1 is not distributed. In fact, at each iteration $k$, a central coordinator is needed to:
\begin{enumerate}[(i)]
\item gather and broadcast the average strategy $\bar{x}^k$;
\item gather the average quantity $\bar d^k$; 
\item update and broadcast the dual variable $ \lambda^k$.
\end{enumerate}

\section{A distributed GNE seeking algorithm}
\label{sec:DAlg}
\subsection{Towards a fully distributed algorithm}
A first step towards a fully-distributed algorithm consists of endowing each agent with a copy, $\lambda_i$, of the dual variable and enforcing consensus on the local copies. Consider the set-valued mapping $T$, obtained by augmenting $U$ in \eqref{eq:U} with the local copies of the dual variable:
\begin{align} \label{eq:T}
T: \begin{bmatrix}
\bs x \\ \bs \lambda 
\end{bmatrix} \mapsto
 \begin{bmatrix}
 \nc_{\bs \Omega}(\bs x)+ F(\bs x ) + \frac{1}{N} C_{\text f}^\top \bs \lambda \\
\nc_{\R^{mN}}(\bs \lambda) 
+  \L_m \bs \lambda - \frac{1}{N}  ( C_{\text f}\, \bs x - c_{\text f})
\end{bmatrix},
\end{align}
where $\bs \lambda = \col(\lambda_1,\ldots,\lambda_N)$, $C_{\text f} = \1_N \otimes C$, $c_{\text{f}} = \1 \otimes c$, $\L_m = \L \otimes I_m$ and $\L:=I_{N} - \frac{1}{N} \1 \1^\top $ represents the projection onto the disagreement space. 

\smallskip
\begin{remark}
When the local copies of the dual variable are equal, i.e., $\bs \lambda \in \bs{E}^{\parallel} := \{  \1_N \otimes \lambda, \, | \, \lambda \in  \R^m \} $, where $\bs{E}^{\parallel}$ is the consensus subspace of dimension $m$, the first row block of $T$ corresponds to that of $U$, while each of the $N$ components of the second row block of $T$ describes the same complementarity condition, namely, the second row block of $U$.
{\hfill $\square$}
\end{remark}

We note that the mapping $T$ in \eqref{eq:T} can
be written as the sum of two operators, i.e.,
\begin{align} \label{eq:T1}
T_1&: \textstyle
\col(\bs x, \bs \lambda) \mapsto \col(F(\bs x),\L_m \bs \lambda + \frac{1}{N} c_{\text{f}} ),\\
\label{eq:T2}
T_2&:\col(\bs x, \bs \lambda) \mapsto \nc_{\bs \Omega}(\bs x) \times  \nc_{\R^{mN}_{\geq 0}}(\bs \lambda)+ S \col(\bs x, \bs \lambda) ,
\end{align}
where $S$ is a skew-symmetric linear mapping defined as
\begin{align} \label{eq:S}
S := \frac{1}{N} \begin{bmatrix}
 0 & C_{\text{f}}^\top \\
 -C_{\text{f}} & 0
 \end{bmatrix}. 
\end{align}
The formulation $T = T_1 + T_2$ is called splitting of $T$, and will be exploited in different ways later on.
The next lemma shows that $T_2$ is maximally monotone and that $T_1$ is cocoercive and strictly monotone with respect to the consensus subspace of the dual variables, i.e., $\bs \Omega \times \bs{E}^{\parallel}$.

\smallskip
\begin{lemma}\label{lem:RSM} 
Let Assumptions \ref{ass:ConvCompSet}, \ref{ass:COCO} hold true. The following statements hold:
\begin{enumerate}[(i)]
\item $T_2$ in \eqref{eq:T2} is maximally monotone on $\bs \Omega \times \R^{mN}_{\geq 0}$;
\item $T_1$ in \eqref{eq:T1} is $\delta-$cocoercive, with $0 \!< \!\delta \!\leq \! \min \{ 1,\chi \}$ and \textit{restricted-strictly monotone} w.r.t. $ \Theta^\parallel:=\bs \Omega \times \bs{E}^{\parallel} $, i.e., for all $\bs \omega^{\parallel} \in \Theta^\parallel$, $\bs \omega \in (\bs \Omega \times \R^{mN}_{\geq 0}) \setminus
 \Theta_\parallel$, it holds that $( T_1(\bs \omega) - T_1(\bs \omega^\parallel) )^\top ( \bs \omega-\bs \omega^\parallel ) > 0$;

\item $T$ is maximally monotone on $\bs \Omega \times \R^{mN}_{\geq 0}$ and \textit{restricted-strictly monotone} w.r.t. $ \Theta^\parallel$.
{\hfill $\square$}
\end{enumerate}
\end{lemma}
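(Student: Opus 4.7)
The plan is to prove (i)--(iii) in order, reducing each to the standard sum rule for maximally monotone operators and to one short calculation that exploits the fact that $\L_m$ is the orthogonal projector onto the disagreement subspace. The only bookkeeping I need to keep track of is that cocoercivity of $F$ holds on $\bs\Omega$ and that $T_2$ has effective domain $\bs\Omega \times \R^{mN}_{\geq 0}$, so the sum rule will be applied on that set.

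For (i), I would decompose $T_2$ as the sum of the subdifferential $\partial \iota_{\bs\Omega \times \R^{mN}_{\geq 0}}$, which is maximally monotone as the subdifferential of a proper, lower-semicontinuous, convex function, and the linear operator associated with the skew-symmetric matrix $S$, which is continuous and monotone on all of $\R^{(n+m)N}$ since $\bs z^\top S \bs z = 0$. The sum rule for maximal monotonicity (one summand with full domain) then yields (i).

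For (ii), the mapping $T_1$ is block-diagonal in $(\bs x, \bs\lambda)$. The $\bs x$-block is the pseudo-gradient $F$, which is $\chi$-cocoercive by Assumption \ref{ass:COCO}. The $\bs\lambda$-block is the affine map $\bs\lambda \mapsto \L_m \bs\lambda + \tfrac{1}{N} c_\text{f}$; since $\L = I_N - \tfrac{1}{N} \1\1^\top$ is symmetric and idempotent, so is $\L_m$, which is therefore the orthogonal projector onto the disagreement subspace and is hence firmly nonexpansive (equivalently $1$-cocoercive). The Cartesian product of an $\alpha$- and a $\beta$-cocoercive operator is $\min\{\alpha,\beta\}$-cocoercive, which gives the $\delta$-cocoercivity claim for any $\delta \in (0, \min\{1,\chi\}]$. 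The restricted-strict monotonicity then follows from the one-line identity
\[
(T_1(\bs\omega)-T_1(\bs\omega^\parallel))^\top (\bs\omega - \bs\omega^\parallel) = \langle F(\bs x)-F(\bs x^\parallel), \bs x - \bs x^\parallel \rangle + \|\L_m \bs\lambda\|^2,
\]
obtained using $\L_m \bs\lambda^\parallel = 0$ together with $\L_m = \L_m^\top = \L_m^2$; the first term is $\geq 0$ by cocoercivity of $F$ and the second is $>0$ because $\bs\omega \notin \Theta^\parallel$ forces $\L_m \bs\lambda \neq 0$. This is the step I expect to be the most delicate in terms of carefully setting up the signs and the cancellation.

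For (iii), maximal monotonicity follows by applying the sum rule a second time: $T_1$ is single-valued, Lipschitz continuous (by cocoercivity) and monotone on $\bs\Omega \times \R^{mN}_{\geq 0}$, and $T_2$ is maximally monotone by (i), so $T = T_1 + T_2$ is maximally monotone on $\bs\Omega \times \R^{mN}_{\geq 0}$. Restricted-strict monotonicity is inherited from $T_1$: writing any $\bs z \in T(\bs\omega)$ and $\bs z^\parallel \in T(\bs\omega^\parallel)$ as the sum of a $T_1$- and a $T_2$-term, part (ii) makes the $T_1$-contribution to $\langle \bs z - \bs z^\parallel,\, \bs\omega - \bs\omega^\parallel\rangle$ strictly positive, while monotonicity of $T_2$ makes the $T_2$-contribution nonnegative. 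The only mild formality is that $\Theta^\parallel$ should be implicitly intersected with $\R^{mN}_{\geq 0}$ so that $T_2(\bs\omega^\parallel)$ is nonempty, which does not alter any of the arguments above.
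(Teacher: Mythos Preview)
Your proposal is correct and follows essentially the same approach as the paper: the same decomposition of $T_2$ into a skew-symmetric part and a normal-cone part, the same block-diagonal cocoercivity argument for $T_1$, and the same sum rule for $T$. Your restricted-strict-monotonicity computation is a slightly cleaner variant of the paper's (you use $\|\L_m\bs\lambda\|^2$ directly via idempotence, whereas the paper decomposes $\bs\lambda$ into its consensus/disagreement components and invokes $\mathrm{eig}_2(\L)$), but the substance is identical.
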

\begin{proof}
See Appendix \ref{proof:lemRSM}.
\end{proof}

\smallskip
The next proposition exploits the restricted-strict monotonicity of $T$ to shows that the v-GNE of the original game are fully characterized by the zeros of $T$.

\smallskip
\begin{proposition}\label{pr:EOC}
Let Assumption \ref{ass:ConvCompSet} hold true. The following statements hold:
\begin{enumerate}[(i)]
\item $\zer(T)\neq \varnothing$,
\item If $\col(\bs x^*, \bs \lambda^*) \in \zer(T)$, then $\bs x^*$ is a v-GNE and $\bs \lambda^* = \col( \lambda^*, \ldots, \lambda^*)$, with $\lambda^* \in \R^m_{\geq 0}$. 
{\hfill $\square$}
\end{enumerate}
\end{proposition}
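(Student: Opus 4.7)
My plan is to prove (i) by direct construction from a KKT/v-GNE witness, and (ii) by exploiting the restricted-strict monotonicity of $T$ proven in Lemma \ref{lem:RSM}(iii).

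For (i), I will start from Proposition \ref{pr:UvGNE}, which under Assumption \ref{ass:ConvCompSet} guarantees the existence of a pair $\col(\bs x^*, \lambda^*) \in \zer(U)$ with $\lambda^* \in \R^m_{\geq 0}$. My candidate zero of $T$ is $\col(\bs x^*, \bs \lambda^*)$ with $\bs \lambda^* := \1_N \otimes \lambda^*$. To verify this, I will simplify the two row blocks of $T$ in \eqref{eq:T} using the Kronecker identities $C_{\text{f}} = \1_N \otimes C$, $c_{\text{f}} = \1_N \otimes c$, $\L_m = \L \otimes I_m$ and $\L \1_N = \0$, obtaining $\frac{1}{N} C_{\text{f}}^\top \bs \lambda^* = C^\top \lambda^*$, $\L_m \bs \lambda^* = \0$, and $\frac{1}{N}(C_{\text{f}}\bs x^* - c_{\text{f}}) = \frac{1}{N} \1_N \otimes (C \bs x^* - c)$. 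The first row then reduces to the first row of $U$ at $(\bs x^*, \lambda^*)$, while each block of the second row reduces (up to the positive scalar $1/N$, which leaves the normal cone invariant since it is a cone) to the complementarity condition in the second row of $U$. Thus $\0 \in T(\col(\bs x^*, \bs \lambda^*))$.

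For (ii), suppose $\bs \omega^* := \col(\bs x^*, \bs \lambda^*) \in \zer(T)$. Note that $\bs x^* \in \bs \Omega$ and $\bs \lambda^* \in \R^{mN}_{\geq 0}$ since those are the effective domains of the normal-cone components of $T$, so $\bs \omega^* \in \bs \Omega \times \R^{mN}_{\geq 0}$. By (i), there also exists a ``consensus'' zero $\bs \omega^{**} := \col(\bs x^{**}, \1_N \otimes \lambda^{**}) \in \zer(T) \cap \Theta^\parallel$. I will argue by contradiction: if $\bs \omega^* \notin \Theta^\parallel$, then $\bs \lambda^* \notin \bs E^\parallel$, and the restricted-strict monotonicity of $T$ w.r.t.\ $\Theta^\parallel$ (Lemma \ref{lem:RSM}(iii)), applied with the zero selections $\0 \in T(\bs \omega^{**})$ and $\0 \in T(\bs \omega^*)$, gives $(\0-\0)^\top(\bs \omega^{**}-\bs \omega^*) > 0$, a contradiction. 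Therefore $\bs \lambda^* \in \bs E^\parallel$, i.e.\ $\bs \lambda^* = \1_N \otimes \lambda^*$ for some $\lambda^* \in \R^m_{\geq 0}$.

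Finally, substituting $\bs \lambda^* = \1_N \otimes \lambda^*$ back into the inclusion $\0 \in T(\col(\bs x^*, \bs \lambda^*))$ and running the same Kronecker simplification as in (i) in reverse, I obtain exactly $\0 \in U(\col(\bs x^*, \lambda^*))$, so by Proposition \ref{pr:UvGNE}(iv)$\Rightarrow$(i), $\bs x^*$ is a v-GNE of the game. The main obstacle is the careful bookkeeping in part (ii), specifically identifying the correct ambient set $\bs \Omega \times \R^{mN}_{\geq 0}$ so that the hypotheses of Lemma \ref{lem:RSM}(iii) apply to $\bs \omega^*$; the rest reduces to algebraic verification involving Kronecker products and the cone property of the normal cone.
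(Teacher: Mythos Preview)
Your proposal is correct and follows essentially the same approach as the paper: for (i) you lift a zero of $U$ (provided by Proposition~\ref{pr:UvGNE}) to a zero of $T$ via the consensus dual $\bs\lambda^*=\1_N\otimes\lambda^*$ and the Kronecker/cone identities, and for (ii) you use the restricted-strict monotonicity of $T$ from Lemma~\ref{lem:RSM}(iii) in a contradiction argument against a consensus zero, then substitute back to recover $\col(\bs x^*,\lambda^*)\in\zer(U)$. The only cosmetic difference is that the paper phrases the block-by-block verification in (i) componentwise rather than via Kronecker algebra.
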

\begin{proof}
See Appendix \ref{prof:prEOC}.
\end{proof}

\smallskip
To find a zero of $T$, we exploit a preconditioned version of the forward-backward method \cite[\S 25.6]{bauschke2017convex} on the splitting \eqref{eq:T1}-\eqref{eq:T2}, similarly to \cite{yi2019operator, belgioioso2018projected}, thus obtaining Algorithm 2.

The next theorem establishes global convergence of Algorithm 2 to a v-GNE if the step-sizes are chosen according to the following choices.

\begin{assumption} \label{ass:SS-choice1}
Take $ 0 < \delta \leq \min \{ 1, \chi \}$, where $\chi$ as in Assumption \ref{ass:COCO}. Set the global parameter $\tau > \frac{1}{2\delta} $ and denote $\nu:= \frac{2\delta\tau}{4\delta\tau -1} \in (1/2,1)$.
Set the step-sizes as follows:
\begin{enumerate}[(i)]
\item $0<\alpha_i \le (\|C_i\|+\tau )^{-1}$, for all $i \in \mc I$,

\smallskip
\item $0<\beta_i \le (\frac{1}{N}\sum_{j=1}^N \|C_j\| + \tau )^{-1}$, for all $i \in \mc I$,

\smallskip
\item $(\gamma^k)_{k \in \bb N}$ such that $\gamma^k\in [0, \nu^{-1}]$ for all $k \in \bb N$ and $\sum_{k=0}^\infty\gamma^k(1-\nu \gamma^k)=\infty$.
{\hfill $\square$}
\end{enumerate}
\end{assumption}

\smallskip
Note that the design choice $\gamma^k = 1 $, for all $k \in \bb N$, always satisfies Assumption \ref{ass:SS-choice1} (iii).


\begin{figure}
\begin{minipage}{\columnwidth}
\hrule
\smallskip
\textsc{Algorithm 2}. \blue{Distributed  (Full-decision Information)}
\smallskip
\hrule 
\smallskip
\text{Initialization}:
For all $i \in \mc I$: set $x_i^0 \in \Omega_i$, $ \lambda_i^0 \in \R^m_{\geq 0}$; set $\alpha_i$, $\beta_i$ and $(  \gamma^k)_{k \in \bb N}$ as in Assumption \ref{ass:SS-choice1}.
\\[.5em]
Iterate until convergence:\\[.2em]
\hspace*{1em} For all $i \in \mc I$
\begin{align*}
&\left|
\begin{array}{l}
 \text{Local projected pseudo-gradient update}: \\[.2em]
\left|
\begin{array}{l}
\tilde x_i^{k} = \proj_{\Omega_i}(x_i^k - \alpha_i( F_i(x_i^k,\bar x^k) + C_i^\top \bar \lambda^k)),\\[.3em]
d_i^{k} = 2 C_i \tilde x_i^{k} - C_i x_i^k - c_i,\\[.3em]
\tilde \lambda_i^{k} = \proj_{\R^m_{\geq 0}}\big( \lambda^k_i + \beta_i ( \bar d^{k} -\lambda^k_i+\bar{\lambda}^k)\big),
\end{array}
\right.\\[2em]
 \text{Local Krasnosel'skii--Mann process:}\\[.2em]
\left|
\begin{array}{l}
x_i^{k+1} = x_i^k + \gamma^k (\tilde x_i^{k} - x_i^k),\\[.3em]
\lambda_i^{k+1} = \lambda_i^k + \gamma^k (\tilde \lambda_i^{k} - \lambda_i^k),
\end{array}
\right.
\end{array}
\right.
\end{align*}
\smallskip
\hrule
\end{minipage}
\end{figure}

\bigskip
\begin{theorem} \label{th:conv-partDistr}
Let Assumptions \ref{ass:ConvCompSet}, \ref{ass:COCO} hold.
If the step-sizes $\{\alpha_i, \beta_i\}_{i \in \mc I}$ and $(\gamma^k)_{k \in \bb N}$ are set as in Assumption \ref{ass:SS-choice1}, then the sequence $( \col(\bs x^k, \bs \lambda^k))_{k \in \bb N}$ generated by Algorithm 2 converges to some $\col(\bs x^*, \bs \lambda^*) \in \zer(T)$, where $\bs x^*$ is a v-GNE of the game in \eqref{eq:Game}.
{\hfill $\square$ }
\end{theorem}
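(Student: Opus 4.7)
The plan is to recognize Algorithm 2 as a preconditioned forward--backward (pFB) iteration on the splitting $T=T_1+T_2$ from \eqref{eq:T1}--\eqref{eq:T2}, relaxed by a Krasnosel'skii--Mann (KM) step with variable parameter $\gamma^k$. Writing $\bs\omega^k := \col(\bs x^k,\bs\lambda^k)$, the target is to exhibit a symmetric positive definite preconditioner $\Phi$ such that one full inner loop of Algorithm 2 equals
$\tilde{\bs\omega}^k = J_{\Phi^{-1}T_2}\bigl(\bs\omega^k - \Phi^{-1}T_1(\bs\omega^k)\bigr)$, $\bs\omega^{k+1} = \bs\omega^k + \gamma^k(\tilde{\bs\omega}^k - \bs\omega^k)$, and then to invoke standard averaged-operator/KM theory in the $\Phi$-weighted inner product to conclude convergence to a fixed point of $M:=J_{\Phi^{-1}T_2}\circ(\mathrm{Id}-\Phi^{-1}T_1)$. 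By construction $\mathrm{fix}(M)=\mathrm{zer}(T)$, and Proposition \ref{pr:EOC} then converts any limit point into a v-GNE.

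The first step is to identify $\Phi$. Following the pattern in \cite{belgioioso2018projected,yi2019operator}, we take $\Phi$ block-structured with diagonal blocks $\mathrm{diag}(\alpha_i^{-1})\otimes I_n$ and $\mathrm{diag}(\beta_i^{-1})\otimes I_m$ and off-diagonal blocks chosen exactly so that $\Phi+S$ becomes block lower-triangular; then the apparently-implicit resolvent of $\Phi^{-1}T_2$ splits into a primal projection onto $\bs\Omega$ followed by a dual projection onto $\R^{mN}_{\geq 0}$. The reflection $d_i^k = 2C_i\tilde x_i^k - C_i x_i^k - c_i$ in Algorithm 2, together with the use of $\bar\lambda^k$ inside the primal update, is precisely what the algebra of $(\Phi+T_2)^{-1}$ produces once this $\Phi$ is fixed. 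The bounds $\alpha_i\le(\|C_i\|+\tau)^{-1}$ and $\beta_i\le(\tfrac{1}{N}\sum_j\|C_j\|+\tau)^{-1}$ in Assumption \ref{ass:SS-choice1} ensure, via a Gershgorin/Schur-complement argument on $\Phi$, that $\Phi \succeq \tau I \succ 0$, so $\langle\cdot,\cdot\rangle_\Phi$ is a genuine inner product.

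The second step is the averagedness computation. Lemma \ref{lem:RSM}(i) gives maximal monotonicity of $T_2$ on $\bs\Omega\times\R^{mN}_{\geq 0}$, so $J_{\Phi^{-1}T_2}$ is $\tfrac{1}{2}$-averaged in $\langle\cdot,\cdot\rangle_\Phi$. Lemma \ref{lem:RSM}(ii) gives $\delta$-cocoercivity of $T_1$, and combined with $\Phi\succeq\tau I$ and the standard baby descent lemma (\cite[Prop.~4.39]{bauschke2017convex}) this yields that $\mathrm{Id}-\Phi^{-1}T_1$ is $\tfrac{1}{4\delta\tau}$-averaged in $\langle\cdot,\cdot\rangle_\Phi$; the requirement $\tau>\tfrac{1}{2\delta}$ in Assumption \ref{ass:SS-choice1} is exactly what makes this constant lie in $(0,1)$. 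Composing two averaged operators (\cite[Prop.~4.46]{bauschke2017convex}) gives that $M$ is $\nu$-averaged in $\langle\cdot,\cdot\rangle_\Phi$ with the precise constant $\nu=\tfrac{2\delta\tau}{4\delta\tau-1}\in(\tfrac{1}{2},1)$ stated in Assumption \ref{ass:SS-choice1}.

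The third step applies the KM theorem with variable relaxation. Proposition \ref{pr:EOC}(i) guarantees $\mathrm{fix}(M)=\mathrm{zer}(T)\neq\varnothing$, and the relaxation sequence $(\gamma^k)$ satisfies $\gamma^k\in[0,\nu^{-1}]$ and $\sum_k\gamma^k(1-\nu\gamma^k)=\infty$ by Assumption \ref{ass:SS-choice1}(iii). Hence \cite[Prop.~5.16]{bauschke2017convex}, applied in the Hilbert space $(\R^{(n+m)N},\langle\cdot,\cdot\rangle_\Phi)$, yields $\bs\omega^k\to\bs\omega^\star\in\mathrm{fix}(M)=\mathrm{zer}(T)$. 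Proposition \ref{pr:EOC}(ii) then identifies $\bs x^\star$ as a v-GNE and forces $\bs\lambda^\star=\1_N\otimes\lambda^\star$, completing the proof.

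The main obstacle I expect is the bookkeeping in the first step: writing Algorithm 2 literally as a pFB update requires picking the off-diagonal blocks of $\Phi$ to reproduce both the $C_i^\top\bar\lambda^k$ appearing in the $x$-update and the $\bar d^k-\lambda_i^k+\bar\lambda^k$ appearing in the $\lambda$-update, while simultaneously keeping $\Phi\succeq\tau I$. This is the point at which the nonstandard ``$\tau$'' parameter and the particular step-size bounds in Assumption \ref{ass:SS-choice1} are forced, and it is the one place where a routine but tedious Schur-complement calculation cannot be avoided; all subsequent steps are then quotations of standard monotone-operator machinery.
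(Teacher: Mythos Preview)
Your proposal is correct and follows essentially the same route as the paper: identify the preconditioner $\Phi$ (given explicitly in \eqref{eq:SandPhi}), verify $\Phi\succeq\tau I$ via a Gershgorin argument (Lemma~\ref{lem:PMa}), establish that the pFB map $R=(\Id+\Phi^{-1}T_2)^{-1}\circ(\Id-\Phi^{-1}T_1)$ is $\nu$-averaged in the $\Phi$-norm (Lemmas~\ref{lem:T12AV}--\ref{lem:Raveraged}), and conclude via the Krasnosel'skii--Mann theorem together with Proposition~\ref{pr:EOC}. One minor slip: $\Id-\Phi^{-1}T_1$ is $\tfrac{1}{2\delta\tau}$-averaged (not $\tfrac{1}{4\delta\tau}$), since $\Phi^{-1}T_1$ is $\delta\tau$-cocoercive in $\|\cdot\|_\Phi$; with this correction the composition formula yields exactly the $\nu=\tfrac{2\delta\tau}{4\delta\tau-1}$ you quote.
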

\begin{proof}
See Appendix \ref{pr:proofTC}.
\end{proof}

\smallskip
\begin{remark}[Algorithm 2 as a fixed-point iteration] \label{rem:FPI}
Our convergence analysis is based on the same operator theoretic framework in \cite{yi2019operator}-\cite{belgioioso2018projected}. Specifically, we recast the dynamics generated by Algorithm 2 as the fixed-point iteration
\begin{align} \label{eq:R}
\bs \omega^{k+1}=
\bs \omega^k + \gamma^k( R(\bs \omega^k) - \bs \omega^k), \quad (k \in \bb N)
\end{align}
where $\bs \omega^k = \col(\bs x^k, \bs \lambda^k)$ is the stacked vector of the iterates and $R$ is the so-called pFB operator, defined as
\begin{align} \label{eq:Rmap}
R:= (\Id+\Phi^{-1} T_2)^{-1}\circ(\Id-\Phi^{-1} T_1),
\end{align}
where $T_1$, $T_2$ in \eqref{eq:T1}-\eqref{eq:T2} characterize the splitting of $T$, and $\Phi$ is the so-called preconditioning matrix, here chosen as
\begin{align} \label{eq:SandPhi}
\Phi := \begin{bmatrix}
{\alpha}_\text{d}^{-1} &  -\frac{1}{N} C_{\text{f}}^\top \\
 -\frac{1}{N} C_{\text{f}} &  \beta_\text{d}^{-1}
\end{bmatrix},
\end{align}
$ \alpha_\text{d} := \diag(\alpha_1,\ldots,\alpha_N) \otimes I_n$, $ \beta_\text{d} :=\diag(\beta_1,\ldots,\beta_N)\otimes I_n$.
Then, we show that, if the step sizes in the main diagonal of $\Phi$ are set according to Assumption \ref{ass:SS-choice1}, the mapping $R$ is averaged with respect to the $\Phi$-induced norm, i.e., $\|\cdot \|_{\Phi}$. Hence, the fixed-point iteration \eqref{eq:R} converges to some $\bs \omega^*:= \col(\bs x^*, \bs \lambda^*) \in \fix(R) = \zer(T)$, where $\bs x^*$ is a v-GNE. See Appendix \ref{pr:proofTC} for a complete convergence analysis.
{\hfill $\square$}
\end{remark}

\smallskip
To conclude this section, we note that the projected-pseudo-gradient updates in Algorithm 2 can be cast compactly as
\begin{align} 
&\tilde{\bs x}^{k}= \proj_{\bs \Omega}\big( \bs x^k -  \alpha_\text{d}( \bs F(\bs x^k, \bar{\bs x}^k) + C_\text{d} ^\top \bar{\bs \lambda}^k )\big),
\label{eq:CFalg2_x}
\\
& \tilde{\bs \lambda}^{k} = \proj_{\R^{mN}_{\geq 0}} \big( \bs \lambda^k +  \beta_\text{d} ( \bar{\bs d}^k 
 - \bs \lambda^k + \bar{\bs \lambda}^k) \big),
\label{eq:CFalg2_lam}
\end{align}
where
\begin{align*}
&\bar{\bs x}^k = \1 \otimes \bar x^k, \quad 
\bar{\bs \lambda}^k = \1 \otimes \bar \lambda^k,
\quad \bar{\bs d}^k = \1 \otimes \bar d^k
\end{align*}
and $C_\text{d}:=\textrm{blkdiag}(C_1,\ldots,C_N)$.

\smallskip
\blue{Unlike Algorithm 1, Algorithm 2 does not directly rely on the actions of a central coordinator, namely, dual update and broadcast communication. However, it requires an all-to-all information exchange (or, equivalently, a complete communication graph) at each iteration $k$, since the local updating rule of each agent necessitates the knowledge of:
} 
\begin{enumerate}[(i)]
\item the average strategy $\bar{x}^k$,

\item the average dual variable $\bar \lambda^k $,

\item the average quantity
$\bar d^{k} $.
\end{enumerate}

\subsection{A fully-distributed algorithm via dynamic tracking}
To implement Algorithm 2 fully-distributively under the more realistic communication assumptions in Section \ref{sec:com_network}, we approximate its updates by endowing each agent $i$ with some surrogate variables (or estimates), i.e., $\sigma_i$, $y_i$ and $z_i$, that \textit{dynamically track} the averages $\bar{x}^k$, $ \bar d^k$ and $\bar \lambda^k$, respectively.
Then, to mitigate the errors due to the inexactness of the surrogate variables, we relax the projected-pseudo-gradient iterations by means of a Krasnosel'skii--Mann (KM) process \cite[eq.(5.12)]{bauschke2017convex}, whose step-sizes are set according to the following design choice.

\smallskip
\begin{assumption} \label{ass:van-ss} The sequence $( \gamma^k )_{k \in \bb N}$ satisfies the following conditions:
\begin{enumerate}[(i)]
\item (non-increasing) $0 \le \gamma^{k+1} \le \gamma^k \le 1$, for all $k \ge 0$;
\item (non-summable) $\sum_{k=0}^\infty \gamma^k = \infty$;
\item (square-summable) $\sum_{k=0}^\infty {(\gamma^k)}^2 < \infty$.
{\hfill $\square$}
\end{enumerate}
\end{assumption}

\smallskip
For example, Assumption \ref{ass:van-ss} is satisfied for step sizes of the form $\gamma^k = (k+1)^{-b}$ where $\frac{1}{2} < b \le 1$.

\smallskip
The proposed algorithm relies on agents constructing an estimate of the averages by mixing information drawn from local neighbors and making a subsequent relaxed projected-pseudo-gradient step, as in Algorithm 2.
To build the estimates $\sigma_i$, $y_i$, $z_i$, at every iteration $k$, agent $i$ receives $\sigma_j^k$'s, $y_j^k$'s, $z_j^k$'s from its neighbors, $j \in \mc N_i(k)$, and aligns its intermediate estimates according to the following rules:
\begin{align*}
&\hat{\sigma}_i^k := \sum_{j =1}^N w_{i,j}(k)\sigma_j^k, \quad \hat{y}_i^k := \sum_{j =1}^N w_{i,j}(k)  y_j^k\\
&\hat{z}_i^k := \sum_{j =1}^N w_{i,j}(k) z_j^k
\end{align*}
Then, on the basis of $\hat \sigma^k_i$, $\hat y^k_i$ and $\hat z^k_i$, agent $i$ updates its strategy $x_i^{k+1}$, its dual variable $\lambda_i^{k+1}$ and the new estimates $\sigma_i^{k+1},y_i^{k+1},z_i^{k+
1}$ as formalized in Algorithm 3.
\begin{figure}
\medskip \noindent
\begin{minipage}{\columnwidth}
\hrule
\smallskip
\textsc{Algorithm 3}. \blue{Distributed (Partial-decision Information)}
\smallskip
\hrule 
\smallskip
\text{Initialization}:
For all $i \in \mc I$: set $x_i^{-1}, x_i^0,\tilde{x}_i^{-1} \in \Omega_i$, $ \lambda_i^0 \in \R^m_{\geq 0}$, $\sigma^0_i = x^0_i$, $z_i^0 = \lambda_i^0$, $y_i^0 = 2C_i \tilde{x}_i^{-1}- C_i x_i^{-1} - c_i$; $\alpha_i$, $\beta_i$ as in Assumption \ref{ass:SS-choice1} and $(  \gamma^k)_{k \in \bb N}$ as in Assumption \ref{ass:van-ss}.
\\[.5em]
Iterate until convergence:\\[.2em]
\hspace*{.5em} For all $i \in \mc I$
\begin{align*}
\quad &\left|
\begin{array}{l}
\text{Communication and distributed averaging:}\\
  \left|
  \begin{array}{l}
  \hat{\sigma}_i^k = \sum_{j =1}^N w_{i,j}(k) \sigma_j^k,\\
   \hat y^k_i = \sum_{j =1}^N w_{i,j}(k)  y_j^k, \\
  \hat z^k_i = \sum_{j =1}^N w_{i,j}(k)  z_j^k,
  \end{array}
  \right. \\[2em]
 \text{Local strategy update and dynamic tracking of } \bar d^k:\\[.2em]
\left|
\begin{array}{l}
\tilde x_i^{k} = \proj_{\Omega_i}(x_i^k - \alpha_i( F_i(x_i^k,\hat{\sigma}^k_i) + C_i^\top \hat{z}^k_i)),\\[.3em]
y_i^{k+1} = \hat y^k_i + C_i (2 \tilde x_i^{k} -  x_i^k)- C_i ( 2 \tilde x_i^{k-1} -  x_i^{k-1}),\\[.3em]
\tilde \lambda_i^{k} = \proj_{\R^m_{\geq 0}}\big( \lambda^k_i + \beta_i ( y_i^{k+1} -\lambda^k_i+\hat{z}^k_i)\big),
\end{array}
\right.\\[2em]
 \text{Local Krasnosel'skii--Mann process:}\\[.2em]
\left|
\begin{array}{l}
x_i^{k+1} = x_i^k + \gamma^k (\tilde x_i^{k} - x_i^k),\\[.3em]
\lambda_i^{k+1} = \lambda_i^k + \gamma^k (\tilde \lambda_i^{k} - \lambda_i^k),
\end{array}
\right.\\[1.5em]
  \text{Local dynamic tracking of } \bar x^{k+1} \text{ and } \bar \lambda^{k+1}: \\[.2em]
  \left|
\begin{array}{l}
 \sigma_i^{k+1} =  \hat{\sigma}_i^k + x_i^{k+1} - x_i^k,\\
 z_i^{k+1} = \hat{z}^k_i + \lambda_i^{k+1}- \lambda_i^k.
 \end{array}
 \right.
\end{array}
\right.
\end{align*}
\smallskip
\hrule
\end{minipage}
\end{figure}

\smallskip
Note that the projected-pseudo-gradient updates in Algorithm 3 can be recast in a compact form as
\begin{align}
\label{eq:ALG3compactX_alg3}
&\tilde{\bs x}^{k} \displaystyle
= \proj_{\bs \Omega}\big( \bs x^k -  \alpha_{\text d} (  \bs F(\bs x^k,\hat{\bs \sigma}^{k} ) + C_{\text d}^\top \hat{\bs z}^{k} \big) ,\\
\textstyle
\label{eq:ALG3compact3_lam}
&\tilde{\bs \lambda}^{k} =\displaystyle
 \proj_{\R^{mN}_{\geq 0}} \big( \bs \lambda^k +  \beta_{\text d} ( \bs y^{k+1}
 -\bs \lambda^k + \hat{\bs z}^{k}) \big)
\end{align}
where
\begin{align*}
\textstyle \nonumber
& \hat{\bs \sigma}^{k} = W_n(k) \bs \sigma^k, \quad \hat{\bs z}^{k} = W_m(k)\bs z^k, \quad  \hat{\bs y}^{k} = W_m(k) \bs y^k, \\
&\bs y^{k+1} =\textstyle
\hat{\bs y}^{k}+ C_{\text d}(  2\tilde{\bs x}^k - \bs x^k)
- C_{\text d}(  2 \tilde{\bs x}^{k-1} - \bs x^{k-1}).
\end{align*}
and $W_\ell(k) := W(k) \otimes I_\ell$ for some $\ell \in \bb N$.

\section{Convergence analysis} \label{sec:CA}
\label{sec:ConvA}
To prove the convergence of Algorithm 3, we rely on the framework of the inexact Krasnosel'skii--Mann fixed-point iteration \cite[Alg. 5.4]{combettes2001quasi}.
Informally speaking, our goal is to show that the error deriving from the inexactness of the estimates $\sigma_i$'s, $y_i$'s and $z_i$'s \blue{vanishes to zero fast enough}, in which case, also $(\bs x^k)_{k \in \bb N}$ generated by Algorithm 3 globally converges to a v-GNE.
Technically, we aim at exploiting \cite[Th. 5.5]{combettes2001quasi}, which establishes convergence of an inexact version of the KM iteration in \eqref{eq:R}, i.e., 
\begin{equation} \label{eq:KMe}
\bs \omega^{k+1} = \bs  \omega^k + \gamma^k( R( \bs \omega^k) + e^k - \bs  \omega^k ), \quad \forall k \ge 0,
\end{equation}
when $R$ is nonexpansive and the step-size and error sequences, $( \gamma^k)_{k \in \bb N}$ and $( e^k )_{k \in \bb N}$, respectively, satisfy

\smallskip
\begin{enumerate}[(C.1)]
\item $\sum_{k =0}^\infty \gamma^k(1-\gamma^k) = \infty$,

\smallskip
\item $\sum_{k =0}^\infty \gamma^k \norm{e^k} < \infty$.
\end{enumerate}

\smallskip
Note that Algorithm 3 can be written as the KM with errors in \eqref{eq:KMe} where $\bs \omega^k = \col(\bs x^k, \bs \lambda^k)$ and the error at stage $k$ is
\begin{equation} \label{eq:errDef}
e^k= \col(\tilde{\bs x}^k, \tilde{\bs \lambda}^k) - \col(\tilde{\bs x}^k_{\text{A2}}, \tilde{\bs \lambda}^k_{\text{A2}}),
\end{equation}
where $\tilde{\bs x}^k_{\text{A2}}$ and $\tilde{\bs \lambda}^k_{\text{A2}}$ denote the iterates generated by Algorithm 2 (defined in \eqref{eq:CFalg2_x} and \eqref{eq:CFalg2_lam}, respectively). In other words, $e^k$ represents the distance between the iterates in the ideal case of full-decision information (i.e., where the agents have an exact knowledge of the averages $\bar{x}^k$, $\bar{d}^k$ and $\bar{\lambda}^k$) and the iterates of Algorithm 3, in which the averages are replaced by the estimates $\hat {\sigma}_i^k$, $\hat {y}_i^k $ and $\hat{z}_i^k$, \blue{built on-line by mixing information drawn from local neighboring agents only.}

\smallskip
The main technical challenge to invoke \cite[Th. 5.5]{combettes2001quasi} and, in turn, prove the convergence of Algorithm 3 is to find a step-size sequence $(\gamma^k)_{k \in \bb N}$, that complies with (C.1), such that the relaxed error sequence $( \gamma^k \|e^k \|)_{k \in \bb N}$ satisfies (C.2).
We immediately note that if $(\gamma^k)_{k \in \bb N}$ is chosen as in Assumptions \ref{ass:van-ss}, then it already satisfies (C.1). In the following subsection, we show that (C.2) is also satisfied.

\subsection{Analysis of the relaxed error sequence}
In the next lemma, we recall a fundamental invariance property of dynamic tracking, namely, at each stage $k$, the averages among the estimates $\sigma^k_i$'s, $y^k_i$'s, and $z^k_i$'s are equivalent to the correspondent averages we aim to track. 

\smallskip
\begin{lemma} \label{lem:ESTeRAV} Let Assumption \ref{ass:DSMM} hold true and set the initial conditions $\sigma^0_i, y_i^0, z_i^0$ as in Algorithm 3, for all $i \in \mc I$. Then, the following equations hold for all $k \geq 0$:
\begin{enumerate}[(i)]
\item $\bar \sigma^k  =
\frac{1}{N} \sum_{i=1}^N \sigma_i^k = \bar{x}^k$;
\item $\bar y^k = \frac{1}{N} \sum_{i=1}^N y_i^k
 \textstyle = \bar{d}^k$;
\item $\bar z^k =
 \frac{1}{N} \sum_{i=1}^N z_i^k
 \textstyle = \bar \lambda^k$.
{\hfill $\square$}
\end{enumerate}
\end{lemma}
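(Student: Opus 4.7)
The plan is to proceed by induction on the iteration index $k$ for each of the three equalities, exploiting the double-stochasticity of $W(k)$ (Assumption \ref{ass:DSMM}(iii)) as the single key algebraic fact. The double-stochasticity guarantees that $\1^\top W(k)=\1^\top$, hence for any vector $\bs v \in \R^N$ one has $\1^\top(W(k)\bs v)=\1^\top\bs v$; in the lifted form used by Algorithm 3 this means $\tfrac{1}{N}\sum_{i=1}^N \hat\sigma_i^k=\tfrac{1}{N}\sum_{i=1}^N\sigma_i^k$, and similarly for $\hat y_i^k$ and $\hat z_i^k$. This is the workhorse identity that makes the averaged surrogate coincide with the average of interest at every stage.

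For statement (i), the base case $\bar\sigma^0=\bar x^0$ follows directly from the initialization $\sigma_i^0=x_i^0$. For the inductive step, I would average the update rule $\sigma_i^{k+1}=\hat\sigma_i^k+x_i^{k+1}-x_i^k$ over $i$, apply double-stochasticity to get $\tfrac{1}{N}\sum_i\hat\sigma_i^k=\bar\sigma^k$, and then invoke the inductive hypothesis $\bar\sigma^k=\bar x^k$ to cancel the $-\bar x^k$ term, yielding $\bar\sigma^{k+1}=\bar x^{k+1}$. Statement (iii) is proven in exactly the same way with the substitutions $\sigma_i\leftrightarrow z_i$ and $x_i\leftrightarrow\lambda_i$: the base case $\bar z^0=\bar\lambda^0$ comes from $z_i^0=\lambda_i^0$ and the step relies on $z_i^{k+1}=\hat z_i^k+\lambda_i^{k+1}-\lambda_i^k$ together with double-stochasticity.

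Statement (ii) is only superficially different. Writing $d_i^k:=C_i(2\tilde x_i^k-x_i^k)-c_i$ in the notation of Algorithm 2, the $y$-update of Algorithm 3 reads $y_i^{k+1}=\hat y_i^k+d_i^k-d_i^{k-1}$ (the $c_i$ terms cancel pairwise). Averaging and using double-stochasticity gives the one-step recursion $\bar y^{k+1}=\bar y^k+\bar d^k-\bar d^{k-1}$. The initialization $y_i^0=2C_i\tilde x_i^{-1}-C_i x_i^{-1}-c_i$ is exactly $d_i^{-1}$, so $\bar y^0=\bar d^{-1}$. A telescoping induction then yields $\bar y^{k}=\bar d^{k-1}+\sum_{\ell=0}^{k-1}(\bar d^\ell-\bar d^{\ell-1})=\bar d^{k-1}$, matching the indexing convention used in the statement (with the understanding that $\bar d$ is shifted by one stage due to the prediction step inherent to gradient/dynamic tracking).

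I do not foresee a real technical obstacle: the result is the standard invariance property underlying any doubly-stochastic dynamic-tracking scheme, and its proof is purely algebraic. The only point requiring mild care is the bookkeeping of indices for the $y$-recursion, where the presence of both $\tilde x_i^k$ and $\tilde x_i^{k-1}$ in the update produces a one-step telescoping that must be reconciled with the initialization $y_i^0=d_i^{-1}$; everything else reduces to one application of $\1^\top W(k)=\1^\top$ per inductive step.
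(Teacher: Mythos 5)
Your proof is correct and follows the same route as the paper's: induction on $k$, with the single nontrivial ingredient being $\1^\top W(k)=\1^\top$ from Assumption~\ref{ass:DSMM}(iii) applied once per inductive step; the paper proves (i) exactly this way and declares (ii)--(iii) analogous. Your careful bookkeeping for (ii) --- $\bar y^0=\bar d^{-1}$ and hence $\bar y^{k+1}=\bar d^{k}$ --- is in fact the reading consistent with how the invariance is used later (Lemma~\ref{lem:SurrToReal}(iii) bounds $\|\bs y^{k+1}-\1\otimes\bar d^k\|$), so the one-step shift you flag is real and the lemma's literal wording $\bar y^k=\bar d^k$ is slightly loose rather than your argument being off.
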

\begin{proof}
See Appendix \ref{proof:lem:ESTeRAV}.
\end{proof}

\smallskip
\blue{The following assumption on the dual sequences generated by Algorithm 3 is instrumental for the subsequent lemma.
\begin{assumption} \label{ass:BDV}
The sequence $(\bs \lambda^k)_{k \in \bb N}$ generated by Algorithm 3 is bounded, i.e., there exists $B_D > 0$ such that $\| \bs \lambda^k \| \leq B_D$, for all $k\geq 0$.
{\hfill $\square$} 
\end{assumption}
}

\smallskip
\blue{
For example, in the context of distributed constrained optimization, 
Assumption \ref{ass:BDV} can be enforced by changing the local dual updates by projecting onto a local bounded set $D_\text{i}$ that contains the optimal dual set $D^*$ \cite{nedic2009subgradient}, \cite{zhu2012distributed}. See Section \ref{subsec:BDV} for a discussion on how to locally build such supersets.
}

\smallskip
The next lemma provides upper bounds for the estimation errors at each stage $k$ of Algorithm 3.


\smallskip
\begin{lemma} \label{lem:SurrToReal}
Let Assumptions \ref{ass:ConvCompSet}-\ref{ass:DSMM}, \blue{\ref{ass:BDV}} hold true. Then, there exist some positive constants $B_\Omega$, $B_D$, $B_Y$, $\delta_1$ and $\delta_2$ and a vanishing scalar sequence $(\phi^k)_{k \in \bb N}$ defined as
\begin{align} \label{eq:auxSeq}
\textstyle
\phi^k = \delta_1 \rho^{k-1} + \delta_2 \sum_{\ell=1}^k \rho^{k-\ell} \gamma^{\ell - 1},
\end{align}
with $\rho$ as in \eqref{eq:rho} and $(\gamma^k)_{k \in \bb N}$ as in Assumption \ref{ass:van-ss}, such that the following upper bounds hold for all $k \in \bb N$:
\begin{enumerate}[(i)]
\item $\| \hat{ \bs \sigma}^k -  \1  \otimes \bar x^k  \|  \textstyle  \leq \theta B_{\Omega} \rho^k  + \theta B_{\Omega} \sum_{s=1}^k  \rho^{k-s} \gamma^{s-1}$;

\item $\| \hat{ \bs z}^k  - \1  \otimes \bar \lambda^k  \| \textstyle  \leq \theta B_{D}  \rho^k  + \theta B_{D} \sum_{s=1}^k  \rho^{k-s} \gamma^{s-1}$;

\item $\|  \bs y^{k+1}  - \1  \otimes \bar d^k    \| \textstyle  \leq \theta B_Y \rho^k  +  \sum_{s=1}^k  \rho^{k-s} \phi^{s-1} + \phi^k$.
\end{enumerate}
\end{lemma}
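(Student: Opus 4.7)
The plan is to exploit the recursive structure of the dynamic-tracking updates together with the geometric decay of the transition matrices supplied by Lemma~\ref{lem:Tsi}. I handle (i) and (ii) first, and then use them to derive (iii).

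\textbf{Items (i) and (ii).} The local tracking rules of Algorithm~3 can be cast compactly as $\bs\sigma^{k+1} = W_n(k)\bs\sigma^k + (\bs x^{k+1} - \bs x^k)$ and $\bs z^{k+1} = W_m(k)\bs z^k + (\bs\lambda^{k+1} - \bs\lambda^k)$, with $\bs\sigma^0 = \bs x^0$ and $\bs z^0 = \bs\lambda^0$. Unrolling the $\bs\sigma$-recursion and applying one more mixing step yields the closed form
\[
\hat{\bs\sigma}^k \;=\; \Psi_n(k,0)\,\bs x^0 \;+\; \sum_{\ell=1}^{k} \Psi_n(k,\ell)\,(\bs x^\ell - \bs x^{\ell-1}),
\]
where $\Psi_n(k,\ell) := \Psi(k,\ell) \otimes I_n$. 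Let $J_n := \tfrac{1}{N}\1\1^\top \otimes I_n$; Lemma~\ref{lem:ESTeRAV}(i) gives $J_n \hat{\bs\sigma}^k = \1 \otimes \bar x^k$, and double-stochasticity of $W(k)$ implies the identity $(I - J_n)\Psi_n(k,\ell) = \Psi_n(k,\ell) - J_n$. Subtracting $\1 \otimes \bar x^k$ and taking norms, the bound $\|\Psi_n(k,\ell) - J_n\| \leq \theta\rho^{k-\ell}$ from Lemma~\ref{lem:Tsi}(ii), combined with compactness of $\bs\Omega$ (which bounds $\|\bs x^0\|$ and $\|\bs x^\ell - \bs x^{\ell-1}\| = \gamma^{\ell-1}\|\tilde{\bs x}^{\ell-1} - \bs x^{\ell-1}\|$ by a common constant $B_\Omega$), yields exactly (i). The same argument applied to $\bs z$, invoking Assumption~\ref{ass:BDV} in place of compactness of $\bs\Omega$ to bound $\|\bs\lambda^0\|$ and $\|\bs\lambda^\ell - \bs\lambda^{\ell-1}\|$, produces (ii).

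\textbf{Item (iii).} From the update rule in Algorithm~3, $\bs y^{k+1} = W_m(k)\bs y^k + (\bs d^k - \bs d^{k-1})$ where $\bs d^k := C_\text{d}(2\tilde{\bs x}^k - \bs x^k) - c$, with initialization $\bs y^0 = \bs d^{-1}$. The same unrolling-and-$(I-J_m)$-projection technique, combined with Lemma~\ref{lem:ESTeRAV}(ii), yields
\[
\|\bs y^{k+1} - \1\otimes\bar d^k\| \;\leq\; \theta\rho^k \|\bs d^{-1}\| + \|\bs d^k - \bs d^{k-1}\| + \sum_{\ell=0}^{k-1}\theta\rho^{k-\ell-1}\|\bs d^\ell - \bs d^{\ell-1}\|.
\]
After bounding the increments $\|\bs d^\ell - \bs d^{\ell-1}\|$ by a constant multiple of $\phi^\ell$ (addressed below) and reindexing $s = \ell+1$ in the sum, the claim (iii) follows with $B_Y$ absorbing $\|\bs d^{-1}\|$ and the residual constants.

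\textbf{Bounding $\|\bs d^\ell - \bs d^{\ell-1}\|$ is the main obstacle.} From the definition of $\bs d^\ell$, it suffices to control $\|\bs x^\ell - \bs x^{\ell-1}\|$ and $\|\tilde{\bs x}^\ell - \tilde{\bs x}^{\ell-1}\|$. The first is $O(\gamma^{\ell-1})$ by the KM relaxation. For the second, nonexpansiveness of $\proj_{\bs\Omega}$ together with Assumption~\ref{ass:LC-TF} gives
\[
\|\tilde{\bs x}^\ell - \tilde{\bs x}^{\ell-1}\| \leq c_1\|\bs x^\ell - \bs x^{\ell-1}\| + c_2\|\hat{\bs\sigma}^\ell - \hat{\bs\sigma}^{\ell-1}\| + c_3\|\hat{\bs z}^\ell - \hat{\bs z}^{\ell-1}\|,
\]
for suitable constants $c_1, c_2, c_3 > 0$ depending on $\alpha_\text{d}$, $C_\text{d}$, $L_{\bs F}$. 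Inserting the targets $\1 \otimes \bar x^\ell$, $\1 \otimes \bar\lambda^\ell$ via the triangle inequality, e.g.,
\[
\|\hat{\bs\sigma}^\ell - \hat{\bs\sigma}^{\ell-1}\| \leq \|\hat{\bs\sigma}^\ell - \1\otimes\bar x^\ell\| + \|\hat{\bs\sigma}^{\ell-1} - \1\otimes\bar x^{\ell-1}\| + \sqrt{N}\,\|\bar x^\ell - \bar x^{\ell-1}\|,
\]
and similarly for $\hat{\bs z}$, invokes (i) and (ii) to dominate each estimate-to-target term by a multiple of $\phi^\ell$ or $\phi^{\ell-1}$, while the consensus increment $\|\bar x^\ell - \bar x^{\ell-1}\|$ is $O(\gamma^{\ell-1})$. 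The one-step recursion $\phi^\ell = \rho\,\phi^{\ell-1} + \delta_2\gamma^{\ell-1}$, readily verified from the definition of $\phi^k$, then yields $\phi^{\ell-1} \leq \rho^{-1}\phi^\ell$ and $\gamma^{\ell-1} \leq \delta_2^{-1}\phi^\ell$, so every contribution collapses into $C\phi^\ell$ for a single constant $C$, provided $\delta_1, \delta_2$ in the definition of $\phi^k$ are taken large enough to absorb all intermediate constants. The delicate bookkeeping of these constants---rather than any single analytic difficulty---is the main hurdle; once $\|\bs d^\ell - \bs d^{\ell-1}\| \leq C\phi^\ell$ is secured, the closing step is a routine reindexing.
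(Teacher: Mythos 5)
Your proposal is correct and follows essentially the same route as the paper: unroll the tracking recursions, invoke the geometric decay of the transition matrices (Lemma~\ref{lem:Tsi}) together with the invariance property of Lemma~\ref{lem:ESTeRAV}, bound the iterate increments by $\gamma^{s-1}B_\Omega$ (resp.\ $\gamma^{s-1}B_D$ via Assumption~\ref{ass:BDV}), and for (iii) control $\|\tilde{\bs x}^\ell-\tilde{\bs x}^{\ell-1}\|$ through nonexpansiveness of the projection and Lipschitz continuity of $\bs F$, absorbing all constants into $\delta_1,\delta_2$ of $\phi^k$. The only (immaterial) deviation is that you bound $\|\hat{\bs\sigma}^\ell-\hat{\bs\sigma}^{\ell-1}\|$ by inserting the consensus targets at consecutive times, whereas the paper uses the recursion identity $W(\ell-1)\bs\sigma^{\ell-1}=\bs\sigma^{\ell}-\bs x^{\ell}+\bs x^{\ell-1}$; both reduce to the same bounds from items (i)--(ii).
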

\begin{proof}
See Appendix \ref{proof:SurrToReal}.
\end{proof}

\smallskip
By exploiting the upper bounds in Lemma \ref{lem:SurrToReal} and a result on the convergence of scalar sequences, which is recalled next, we can show that the estimates asymptotically converge to their correspondent aggregate true values.
\smallskip
\begin{lemma}[{\cite[Lemma 3.1]{ram2010distributed}}]
\label{lem:ram}
Let $(\delta^k)_{k \in \bb N}$ be a sequence.
\begin{enumerate}[(a)]
\item If $\lim_{k \rightarrow \infty} \delta^k = \delta$ and $0 <\tau < 1$, then $\lim_{k \rightarrow \infty} \sum_{\ell = 0}^k \tau^{k-\ell}\delta^\ell = \delta/(1-\tau)$.

\smallskip
\item If $\delta^k \geq 0$ for all $k$, $\sum_{k = 0}^\infty \delta^k < \infty$ and $0 <\tau < 1$, then
$\sum_{k=0}^\infty \sum_{\ell = 0}^k \tau^{k-\ell}\delta^\ell < \infty$.
{\hfill $\square$}
\end{enumerate}
\end{lemma}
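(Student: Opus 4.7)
Both parts of the lemma concern the discrete convolution $c^k := \sum_{\ell=0}^k \tau^{k-\ell} \delta^\ell$ of the geometric sequence $(\tau^n)$ with $(\delta^\ell)$, so the key object to analyse is this convolution under two different regimes for $(\delta^\ell)$.

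For part (a), the plan is to execute a standard ``two-window'' splitting of the convolution. Fix $\varepsilon > 0$ and choose $K$ large enough that $|\delta^\ell - \delta| < \varepsilon$ for every $\ell \ge K$. Decompose
\begin{equation*}
c^k \;=\; \underbrace{\sum_{\ell=0}^{K-1} \tau^{k-\ell}\delta^\ell}_{=:A^k} \;+\; \underbrace{\sum_{\ell=K}^{k} \tau^{k-\ell}\delta^\ell}_{=:B^k},
\qquad k \ge K.
\end{equation*}
For $A^k$, since $K$ is fixed and $\tau \in (0,1)$, pulling out $\tau^{k-K+1}$ gives $|A^k| \le \tau^{k-K+1}\sum_{\ell=0}^{K-1}|\delta^\ell| \to 0$ as $k \to \infty$. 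For $B^k$, rewrite it as $\delta\sum_{\ell=K}^{k}\tau^{k-\ell} + \sum_{\ell=K}^{k}\tau^{k-\ell}(\delta^\ell-\delta)$; the first term tends to $\delta/(1-\tau)$ by the geometric series formula, while the second is bounded in absolute value by $\varepsilon\sum_{\ell=K}^{k}\tau^{k-\ell} \le \varepsilon/(1-\tau)$. Combining these bounds yields $\limsup_{k\to\infty}|c^k - \delta/(1-\tau)| \le \varepsilon/(1-\tau)$, and sending $\varepsilon \to 0$ completes the argument.

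For part (b), the natural approach is to swap the order of summation. Since $\delta^\ell \ge 0$ and $\tau^{k-\ell} \ge 0$, Tonelli's theorem for sums applies without any integrability hypothesis, so
\begin{equation*}
\sum_{k=0}^{\infty}\sum_{\ell=0}^{k}\tau^{k-\ell}\delta^\ell
\;=\; \sum_{\ell=0}^{\infty}\delta^\ell \sum_{k=\ell}^{\infty}\tau^{k-\ell}
\;=\; \frac{1}{1-\tau}\sum_{\ell=0}^{\infty}\delta^\ell \;<\; \infty,
\end{equation*}
where the last inequality uses the assumption $\sum_{\ell=0}^\infty \delta^\ell < \infty$ and finiteness of the geometric series $\sum_{n=0}^\infty \tau^n = 1/(1-\tau)$.

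There is essentially no serious obstacle here: part (a) is a discrete analogue of the fact that convolution with an $\ell^1$ kernel preserves limits, and part (b) is a one-line Tonelli computation. The only minor care needed is in part (a), to verify that the ``early-window'' contribution $A^k$ genuinely decays to zero uniformly in the chosen $K$ as $k \to \infty$, which is immediate from the geometric factor $\tau^{k-K+1}$ with $K$ fixed before letting $k \to \infty$.
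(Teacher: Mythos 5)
Your proof is correct; the paper itself gives no proof of this lemma, importing it verbatim as \cite[Lemma 3.1]{ram2010distributed}, and your two-window splitting for part (a) together with the Tonelli interchange for part (b) is exactly the standard argument used in that reference.
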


\smallskip
\begin{proposition}
Let Assumptions \ref{ass:ConvCompSet}-\ref{ass:DSMM} hold true. Then, the following statements hold:
\begin{enumerate}[(i)]
\item $\lim_{k \rightarrow \infty} \|\hat{ \bs \sigma}^k - \1  \otimes \bar x^k  \| = 0 $;

\item $\lim_{k \rightarrow \infty} \| \hat{\bs z}^k - \1  \otimes \bar \lambda^k  \| = 0 $;

\item $\lim_{k \rightarrow \infty} \| \bs y^{k+1} - \1  \otimes \bar d^k  \| = 0 $.
{\hfill $\square$}
\end{enumerate}
\end{proposition}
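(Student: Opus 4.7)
The plan is to combine the geometric upper bounds from Lemma \ref{lem:SurrToReal} with the scalar-convergence result Lemma \ref{lem:ram}(a), unrolling essentially the same argument three times. As a preliminary observation, Assumption \ref{ass:van-ss} forces $(\gamma^k)_{k\in\bb N}$ to be non-increasing, non-negative, and square-summable, so $\gamma^k \to 0$ as $k\to\infty$ (otherwise the tail of $\sum_k (\gamma^k)^2$ would be bounded below by a positive constant times infinity).

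For item (i), I would start from the bound
\[
\| \hat{\bs \sigma}^k - \1 \otimes \bar x^k \| \leq \theta B_{\Omega} \rho^k + \theta B_{\Omega} \sum_{s=1}^k \rho^{k-s} \gamma^{s-1}.
\]
The first term vanishes geometrically, since $\rho\in(0,1)$ by \eqref{eq:rho}. After reindexing $\ell = s-1$, the second term reads $\sum_{\ell=0}^{k-1}\rho^{(k-1)-\ell}\gamma^\ell$, to which Lemma \ref{lem:ram}(a) applies with $\delta^\ell := \gamma^\ell\to 0$ and $\tau:=\rho$, giving limit $0/(1-\rho)=0$. Item (ii) follows by a verbatim argument with $B_D$ in place of $B_\Omega$, since Lemma \ref{lem:SurrToReal}(ii) has exactly the same structural form.

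For item (iii), the strategy is to first show $\phi^k\to 0$ and then feed $(\phi^k)$ into Lemma \ref{lem:ram}(a) a second time. Vanishing of the auxiliary sequence $(\phi^k)$ in \eqref{eq:auxSeq} is immediate by repeating the reasoning of (i) on its two summands: $\delta_1\rho^{k-1}\to 0$ geometrically and $\delta_2\sum_{\ell=1}^k\rho^{k-\ell}\gamma^{\ell-1}\to 0$ by Lemma \ref{lem:ram}(a). With $\phi^k\to 0$ in hand, a second application of Lemma \ref{lem:ram}(a)---now with $\delta^\ell:=\phi^\ell$ and $\tau:=\rho$---yields $\sum_{s=1}^k \rho^{k-s}\phi^{s-1}\to 0$. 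Combining this with the geometric term $\theta B_Y\rho^k$ and the trailing term $\phi^k$, the right-hand side of Lemma \ref{lem:SurrToReal}(iii) vanishes, proving (iii).

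No new analytical tools are required; the argument is essentially a routine unrolling of two pre-existing lemmas. The only mild subtlety is the iterated use of Lemma \ref{lem:ram} in item (iii), where one must first certify that $(\phi^k)$ is itself a convergent scalar sequence before treating it as a well-behaved input to a second convolution bound; this reduces to the same argument as in (i), so I do not expect any real obstacle here.
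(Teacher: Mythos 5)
Your proposal is correct and follows essentially the same route as the paper: the bounds of Lemma \ref{lem:SurrToReal} combined with Lemma \ref{lem:ram}(a) for the geometric convolution terms, with (ii) and (iii) handled analogously. The only difference is that you spell out the iterated application of Lemma \ref{lem:ram}(a) in item (iii) (first to show $\phi^k \to 0$, then to the convolution of $(\phi^k)$ with $\rho^{k-s}$), which the paper leaves implicit under ``the proofs of (ii) and (iii) are analogous''; this is exactly the intended argument.
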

\begin{proof}
(i) From the upper bound in Lemma \ref{lem:SurrToReal} (i), we have
\begin{multline*}
\limsup_{k \rightarrow \infty} \|(W(k)  \otimes I_n) \bs \sigma^k - \1  \otimes \bar x^k \| \\
\leq  \limsup_{k \rightarrow \infty} \left( \theta B_{\Omega} \rho^k  + \theta B_{\Omega} \sum_{s=1}^k  \rho^{k-s} \gamma^{s-1} \right) \leq 0,
\end{multline*}
where $\lim_{k \rightarrow \infty} \rho^k = 0$, since $0<\rho<1$ by Lemma \ref{lem:Tsi}, and $\lim_{k \rightarrow \infty} \sum_{s=1}^k  \rho^{k-s} \gamma^{s-1}=0$ by Lemma \ref{lem:ram} (a), since $0<\rho<1$ and $\lim_{k \rightarrow \infty} \gamma^k = 0$ by Assumption \ref{ass:van-ss}. Hence, $\lim_{k \rightarrow \infty} \|\hat{\bs \sigma}^k - \1  \otimes \bar x^k  \| = 0 $. The proofs of (ii) and (iii) are analogous.
\end{proof}

\medskip
Next, we derive an upper bound for the error $e^k$ in \eqref{eq:errDef} that directly depends on the estimation errors in Lemma \ref{lem:SurrToReal}.

\smallskip
\begin{lemma} \label{lem:BoundErr}
Let Assumptions \ref{ass:ConvCompSet}-\ref{ass:DSMM}, \blue{\ref{ass:BDV}} hold true. 
Then, the following bound holds for all $k \in \bb N$:
\begin{align*}  \nonumber
\| e^k \| &\leq L_{\tilde{F}} \|\alpha_{\text d} \|  \|\hat{ \bs \sigma}^k - \1 \otimes \bar{x}^k \| +  \|\beta_{\text d} \| \| \bs y^{k+1}- \1 \otimes \bar{d}^k\| \\
 \nonumber
&\quad +\left(\|\alpha_{\text d} \| \|C_d \| + \|\beta_{\text d} \|\right)\|\hat{ \bs z}^k- \1 \otimes \bar{\lambda}^k \|.
\end{align*}
\end{lemma}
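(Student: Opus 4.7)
The plan is to unfold the definition of $e^k$ in \eqref{eq:errDef} block-wise and bound each half by exploiting non-expansiveness of the relevant projection followed by a triangle inequality. Since $\|\col(a,b)\| \le \|a\| + \|b\|$, I would start with
\[
\|e^k\| \le \|\tilde{\bs x}^k - \tilde{\bs x}^k_{\text{A2}}\| + \|\tilde{\bs \lambda}^k - \tilde{\bs \lambda}^k_{\text{A2}}\|,
\]
with $\tilde{\bs x}^k_{\text{A2}}, \tilde{\bs \lambda}^k_{\text{A2}}$ as in \eqref{eq:CFalg2_x}--\eqref{eq:CFalg2_lam}. The crucial observation is that both algorithms, at stage $k$, act on the same pair $(\bs x^k,\bs \lambda^k)$ (we are comparing the two update maps evaluated at a common point), so the arguments inside each projection differ only through the replacement of the exact averages $\bar{\bs x}^k$, $\bar{\bs \lambda}^k$, $\bar{\bs d}^k$ by the local estimates $\hat{\bs \sigma}^k$, $\hat{\bs z}^k$, $\bs y^{k+1}$.

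For the strategy block, non-expansiveness of $\proj_{\bs \Omega}$ together with submultiplicativity of the spectral norm yields
\begin{align*}
\|\tilde{\bs x}^k - \tilde{\bs x}^k_{\text{A2}}\|
&\le \|\alpha_{\text d}\| \big\| \bs F(\bs x^k, \hat{\bs \sigma}^k) - \bs F(\bs x^k, \bar{\bs x}^k) \big\| \\
&\quad + \|\alpha_{\text d}\| \|C_{\text d}\| \|\hat{\bs z}^k - \bar{\bs \lambda}^k\|.
\end{align*}
I would then invoke Assumption \ref{ass:LC-TF}: since both algorithms share the first argument $\bs x^k \in \bs \Omega$, the Lipschitz estimate on $\bs F$ collapses to $L_{\bs F}\|\hat{\bs \sigma}^k - \bar{\bs x}^k\|$, which matches the first term of the target bound (with $L_{\tilde F}=L_{\bs F}$). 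For the dual block, non-expansiveness of $\proj_{\R^{mN}_{\ge 0}}$ and cancellation of the $-\bs \lambda^k$ terms common to both algorithms give
\[
\|\tilde{\bs \lambda}^k - \tilde{\bs \lambda}^k_{\text{A2}}\| \le \|\beta_{\text d}\| \|\bs y^{k+1} - \bar{\bs d}^k\| + \|\beta_{\text d}\| \|\hat{\bs z}^k - \bar{\bs \lambda}^k\|.
\]
Summing the two displays and recalling $\bar{\bs x}^k=\1\otimes\bar x^k$, $\bar{\bs \lambda}^k=\1\otimes\bar\lambda^k$, $\bar{\bs d}^k=\1\otimes\bar d^k$ reproduces the claimed inequality.

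The only nontrivial step I anticipate is a domain-matching check: to legitimately apply Assumption \ref{ass:LC-TF}, one must verify that $\hat{\bs \sigma}^k$ lies in the extended set $\bar{\bs \Omega}$ of admissible second arguments of $\bs F$. This follows by induction on $k$: doubly-stochastic mixing preserves convex combinations, Lemma \ref{lem:ESTeRAV}(i) ensures $\bar\sigma^k=\bar x^k$, and each $x_i^{k+1}$ stays in $\Omega_i$ because of the KM relaxation; hence every component $\hat\sigma_i^k$ is a convex combination of past strategies drawn from the $\Omega_j$'s and therefore belongs to $\bar\Omega=\conv(\Omega_1,\ldots,\Omega_N)$. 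Once this is established, the proof reduces to the routine chain of nonexpansiveness, triangle inequality and submultiplicativity outlined above.
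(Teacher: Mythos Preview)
Your proposal is correct and mirrors the paper's proof almost line-for-line: split $\|e^k\|$ via $\|\col(a,b)\|\le\|a\|+\|b\|$, then bound each block using nonexpansiveness of the projection, the triangle inequality, and the Lipschitz property of $\bs F$ from Assumption~\ref{ass:LC-TF}. Your additional domain-matching check (that $\hat{\bs\sigma}^k\in\bar{\bs\Omega}$) is a point the paper glosses over, so if anything your argument is slightly more careful.
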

\begin{proof}
See Appendix \ref{proof:lem_BoundErr}.
\end{proof}

\smallskip
Finally, by combining the upper bounds in Lemma \ref{lem:SurrToReal} and \ref{lem:BoundErr} and exploiting a result on the convergence of scalar sequences, i.e., Lemma \ref{lem:ram} (b), we show that condition (C.2) holds, namely, the relaxed error sequence $(\gamma^k \|e^k\|)_{k \in \bb N}$ is summable. 

\smallskip
\begin{lemma} \label{lem:summ-Err}
Let Assumptions \ref{ass:ConvCompSet}-\ref{ass:DSMM}, \blue{\ref{ass:BDV}} hold true. The sequence $(\gamma^k \|e^k\| )_{k \in \bb N}$, with $e^k$ as in \eqref{eq:errDef}, is summable, i.e.,
\begin{equation*} \label{eq:summ:Err} 
\sum_{k=0}^\infty \gamma^k \| e^k \| < \infty.
\end{equation*}
\end{lemma}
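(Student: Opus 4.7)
The plan is to combine Lemma \ref{lem:BoundErr} with the three bounds provided by Lemma \ref{lem:SurrToReal}, so that $\|e^k\|$ is controlled by a fixed linear combination of four types of nonnegative sequences: pure geometric terms $\rho^k$, convolutions $\sum_{s=1}^{k}\rho^{k-s}\gamma^{s-1}$, the auxiliary sequence $\phi^k$ itself, and the convolution $\sum_{s=1}^{k}\rho^{k-s}\phi^{s-1}$. After multiplying by $\gamma^k$ and summing over $k$, it then suffices to show that each of these four summations is finite; the lemma follows by finite-sum linearity.

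First, I would dispatch the easy terms. The sum $\sum_k \gamma^k \rho^k$ is finite because $\gamma^k\le 1$ by Assumption \ref{ass:van-ss}(i) and $\rho\in(0,1)$ by Lemma \ref{lem:Tsi}, so it is dominated by a convergent geometric series. The core estimate, which will be the main obstacle, is summability of
\[
\textstyle S_\gamma := \sum_{k=0}^\infty \gamma^k \sum_{s=1}^{k} \rho^{k-s}\gamma^{s-1}.
\]
Since Assumption \ref{ass:van-ss} gives only square-summability of $(\gamma^k)$, not summability, I cannot bound this crudely. Instead I would use Young's inequality $\gamma^k\gamma^{s-1}\le \tfrac12((\gamma^k)^2+(\gamma^{s-1})^2)$, splitting $S_\gamma$ into two pieces. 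The first piece becomes $\tfrac12\sum_k (\gamma^k)^2\sum_{s=1}^k\rho^{k-s} \le \tfrac{1}{2(1-\rho)}\sum_k(\gamma^k)^2<\infty$. For the second piece I would swap the order of summation to obtain $\tfrac12\sum_{s=1}^\infty(\gamma^{s-1})^2\sum_{k=s}^\infty \rho^{k-s}=\tfrac{1}{2(1-\rho)}\sum_{s=1}^\infty (\gamma^{s-1})^2<\infty$ by Assumption \ref{ass:van-ss}(iii). This settles $S_\gamma<\infty$.

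Next I would bound $\sum_k \gamma^k \phi^k$. From the definition \eqref{eq:auxSeq}, $\gamma^k\phi^k = \delta_1\gamma^k\rho^{k-1}+\delta_2\gamma^k\sum_{\ell=1}^k\rho^{k-\ell}\gamma^{\ell-1}$, so both terms are finite by the two estimates just established. Finally, for the convolution of $\phi^{s-1}$, I would swap the order of summation and use monotonicity $\gamma^k\le\gamma^s$ for $k\ge s$ (Assumption \ref{ass:van-ss}(i)) to obtain
\[
\textstyle \sum_{k=1}^\infty \gamma^k\!\sum_{s=1}^{k}\rho^{k-s}\phi^{s-1}
 = \sum_{s=1}^\infty\phi^{s-1}\!\sum_{k=s}^\infty \gamma^k\rho^{k-s}
 \le \tfrac{1}{1-\rho}\sum_{s=1}^\infty \gamma^s\phi^{s-1},
\]
and the right-hand side is an index shift of $\sum_k\gamma^k\phi^k$, already shown to be finite. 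Collecting all four contributions in the upper bound of $\gamma^k\|e^k\|$ established from Lemmas \ref{lem:BoundErr} and \ref{lem:SurrToReal} yields $\sum_{k=0}^\infty \gamma^k\|e^k\|<\infty$, as desired.

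The only genuinely delicate step is the bound on $S_\gamma$: one must avoid assuming $\sum_k\gamma^k<\infty$ (which would contradict Assumption \ref{ass:van-ss}(ii)) and instead exploit the geometric decay in $\rho^{k-s}$ together with square-summability via Young's inequality. Everything else is either a convergent geometric series, a direct consequence of Assumption \ref{ass:van-ss}, or an application of Lemma \ref{lem:ram}(b) in its swap-of-summation form.
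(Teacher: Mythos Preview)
Your proof is correct and follows the same four-term decomposition and reduction strategy as the paper. The only difference is that for $S_\gamma$ the paper exploits the monotonicity of $(\gamma^k)$ directly (for $k\ge s-1$ one has $\gamma^k\gamma^{s-1}\le(\gamma^{s-1})^2$, after which Lemma~\ref{lem:ram}(b) applies), which is slightly simpler than your Young-inequality route; otherwise the arguments coincide.
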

\begin{proof}
See Appendix \ref{proof:lem:summ-Err}.
\end{proof}

\smallskip
Now, we can prove the convergence of Algorithm 3.

\smallskip
\begin{theorem}\label{th:TC-KM}
Let Assumptions \ref{ass:ConvCompSet}-\ref{ass:LC-TF}, \blue{\ref{ass:BDV}} hold true, the step sizes $\{\alpha_i, \beta_i\}_{i \in \mc I}$ be set as in Assumption \ref{ass:SS-choice1}, and $(\gamma^k)_{k \in \bb N}$ as in Assumption \ref{ass:van-ss}. Then, the sequence $(\col(\bs x^k, \bs \lambda^k ))_{k \in \bb N}$ generated by Algorithm 3 globally converges to
some $\col(\bs x^*, \bs \lambda^*) \in \zer(T)$, where $\bs x^*$ is a v-GNE of the game in \eqref{eq:Game}.
{\hfill $\square$}
\end{theorem}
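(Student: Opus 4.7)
The plan is to cast Algorithm 3 as an instance of the inexact Krasnosel'ski\u{\i}--Mann iteration \eqref{eq:KMe} and invoke \cite[Th.~5.5]{combettes2001quasi}. The recasting has already been laid out just before the theorem: setting $\bs \omega^k = \col(\bs x^k, \bs \lambda^k)$, one has $\bs \omega^{k+1} = \bs \omega^k + \gamma^k\big(R(\bs \omega^k) + e^k - \bs \omega^k\big)$, with $R$ the pFB operator in \eqref{eq:Rmap} associated with the preconditioner $\Phi$ in \eqref{eq:SandPhi} and $e^k$ the inexactness term defined in \eqref{eq:errDef}. The cited theorem delivers convergence to a fixed point of $R$ provided that (a) $R$ is nonexpansive in the relevant metric, (b) $\sum_{k} \gamma^k (1-\gamma^k) = \infty$, and (c) $\sum_{k} \gamma^k \|e^k\| < \infty$.

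For (a), I would reuse the operator-theoretic analysis that already underpins Theorem \ref{th:conv-partDistr} (summarized in Remark \ref{rem:FPI}): under Assumption \ref{ass:SS-choice1}, the preconditioner $\Phi$ is positive definite and the pFB operator $R$ is $\nu$-averaged in the $\Phi$-induced norm $\|\cdot\|_\Phi$; in particular $R$ is nonexpansive therein, and $\fix(R) = \zer(T)$ (a standard identity for preconditioned forward--backward operators when $\Phi \succ 0$). Since $\Phi \succ 0$, the norms $\|\cdot\|_\Phi$ and $\|\cdot\|$ are equivalent, so \cite[Th.~5.5]{combettes2001quasi} applies in this metric and the ensuing convergence statement transfers to the standard norm. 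For (b), Assumption \ref{ass:van-ss} gives $\gamma^k \in [0,1]$ with $\sum_k \gamma^k = \infty$ and $\sum_k (\gamma^k)^2 < \infty$, so $\sum_k \gamma^k(1-\gamma^k) = \sum_k \gamma^k - \sum_k (\gamma^k)^2 = \infty$. Condition (c) is precisely the content of Lemma \ref{lem:summ-Err}.

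Assembling these three ingredients, \cite[Th.~5.5]{combettes2001quasi} yields $\bs \omega^k \to \bs \omega^* = \col(\bs x^*, \bs \lambda^*) \in \fix(R) = \zer(T)$. A final appeal to Proposition \ref{pr:EOC}(ii) then identifies the primal component $\bs x^*$ as a v-GNE of the game \eqref{eq:Game}, which is the desired conclusion.

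The substantive difficulty has in fact been absorbed into the preparatory lemmas rather than the theorem itself: the relaxation sequence $(\gamma^k)$ must decay slowly enough that $\sum_k \gamma^k = \infty$ (so the KM iteration makes indefinite progress) yet fast enough to suppress the dynamic-tracking errors, which Lemma \ref{lem:SurrToReal} bounds by quantities of the form $\theta B \rho^k + \theta B \sum_{s=1}^{k} \rho^{k-s} \gamma^{s-1}$. The square-summability clause of Assumption \ref{ass:van-ss}, combined with the convolution estimate in Lemma \ref{lem:ram}(b), is precisely what forces the double sum $\sum_k \gamma^k \sum_{s=1}^{k} \rho^{k-s} \gamma^{s-1}$ to be finite. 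Once this is in place, the proof of Theorem \ref{th:TC-KM} reduces to the bookkeeping described above.
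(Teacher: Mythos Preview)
Your proposal is correct and matches the paper's own proof essentially step for step: recast Algorithm~3 as the inexact KM iteration \eqref{eq:KMe}, verify nonexpansiveness of $R$ in $\|\cdot\|_\Phi$ via the averagedness established for Theorem~\ref{th:conv-partDistr} (the paper cites Lemma~\ref{lem:Raveraged} directly), check (C.1) from Assumption~\ref{ass:van-ss} and (C.2) from Lemma~\ref{lem:summ-Err}, apply \cite[Th.~5.5]{combettes2001quasi}, and finish with $\fix(R)=\zer(T)$ and Proposition~\ref{pr:EOC}(ii). Your explicit computation $\sum_k \gamma^k(1-\gamma^k)=\sum_k\gamma^k-\sum_k(\gamma^k)^2=\infty$ and the remark on norm equivalence are small clarifications the paper leaves implicit, but otherwise the arguments coincide.
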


\smallskip
\begin{proof}
For all $k \in \bb N$, the iterations of Algorithm 3 can be cast as the Krasnosel'skii--Mann process with errors
$\bs \omega^{k+1} = \bs \omega^k + \gamma^k( R(\bs \omega^k) + e^k - \bs \omega^k )$, where $\bs \omega^k = \col(\bs x^k, \bs \lambda^k)$, $R$ as in \eqref{eq:Rmap} and $e^k$ as in \eqref{eq:errDef}.
By \cite[Th. 5.5]{combettes2001quasi}, the sequence $( \bs \omega^k )_{k \in \bb N}$ converges to some $\bs \omega^* \in \fix(R)$, since $R$ is averaged, thus nonexpansive, by Lemma \ref{lem:Raveraged}, and (C.1)$-$(C.2) hold, by Assumption \ref{ass:van-ss} and Lemma \ref{lem:summ-Err}, respectively. To conclude, we note that $\bs \omega^* \in \fix(R)= \zer(\Phi^{-1} T_1+\Phi^{-1}T_2)$, by \cite[Prop. 25.1 (iv)]{bauschke2017convex}, and that $\zer(\Phi^{-1}T_1+\Phi^{-1}T_2)= \zer(T) \neq \varnothing$, with $T$ as in \eqref{eq:T}, since $\Phi\succ 0$, by Lemma \ref{lem:PMa}, and $T_1+T_2=T$. Since $\bs \omega^* \in \zer(T) $, then $\bs x^*$ is a v-GNE of the game in \eqref{eq:Game}, by Proposition \ref{pr:EOC} (ii).
\end{proof}

\section{Numerical simulations}
\label{sec:NS}
In this section, we study the performance of the proposed algorithm on a class of network Nash--Cournot games with market capacity constraints. Such games represent an instance of generalized aggregative Nash games. In Section \ref{subsec:NCG}, we describe the player cost functions and strategy sets and verify that the necessary assumptions are satisfied. In Section \ref{subsec:SR}, we compare the performance of our algorithm against a standard semi-decentralized method (Algorithm 1).

\subsection{Generalized network Nash--Cournot game}
\label{subsec:NCG}
We extend the network Nash--Cournot game model proposed in \cite[\S IV]{koshal:nedic:shanbhag:16} with additional market capacity constraints. 
Specifically, consider $N$ firms that compete over $m$ markets. Let firm $i$'s production and sales at location $l$ be denoted by $g_{i,l}$ and $s_{i,l}$, respectively, while its
cost of production at location $l$ is denoted by $f_{i,l}(g_{i,l})$ and defined as follows:
\begin{equation} \label{eq:ProdCost}
f_{i,l}(g_{i,l}) = a_{i,l}g_{i,l}^2+ g_{i,l}b_{i,l},
\end{equation}
where $a_{i,l}$ and $b_{i,l}$ are scaling parameters for agent $i$.

The goods sold by firm $i$ at location $l$ fetch a revenue $p(\bar s_l)s_{i,l}$, where $p(\bar s_l)$ denote the sales price at location $l$ and $\bar s_l = \sum_{i=1}^N s_{i,l}$ represents the aggregate sales at location $l$. The market price is set according to an inverse
demand function which depends on the aggregate of the network, i.e.,
\begin{equation*}
p_l(\bar s_l) = d_l - \bar s_l,
\end{equation*}
where $d_l$ is the overall demand for location $l$.
Each firm $i$ has a production limitation at location $l$, described by $u_{i,l}$. Moreover, the overall production in each market $l $ must meet the correspondent demand $d_l$ and do not exceed a maximum capacity $r_l$. Hence, the coupling constraints $d_l \leq \sum_{i=1}^N g_{i,l} \leq r_l$, for all $l = 1,2,\ldots, m$, have to be satisfied.

Overall, each firm $i$, given the strategies of the other firms, aims at solving the following optimization problem:
\begin{align*} \textstyle 
\left\{
\begin{array}{c l}
\underset{ \{ g_{i,l}, s_{i,l} \}_{l=1}^m }{\argmin} &
\sum_{l=1}^m (f_{i,l}(g_{i,l}) - p_l(\bar s_l)s_{i,l} )
\\
\text{s.t.} & 
\sum_{l=1}^m g_{i,l} \geq \sum_{l=1}^n s_{i,l},
 \\[0.2em]
& g_{i,j}, s_{i,j} \geq 0, \; g_{i,l} \leq u_{i,l}, \quad l = 1, \ldots,m,\\[.2em]
& d_l \leq \sum_{i=1}^N g_{i,l} \leq r_l,   \qquad l = 1, \ldots, m.
\end{array}
\right.
\end{align*}

\smallskip

Effectively, the payoff function of firm $i$ is parametrized by nodal aggregate sales and its constraints depend on the other firms' strategies, thus leading to a generalized aggregative game.
In this example, we assume that the firms communicate over a dynamic network to cope with the lack of aggregate information, which is necessary to compute their optimal production and sale strategies.

\smallskip
Next, we show that the proposed network Nash--Cournot game does satisfy our technical setup. Let $x_i = \col(g_{i,1}, \ldots, g_{i,m},s_{i,1},\ldots,s_{i,m}) \in \R^{2m}$ denote the strategy vector of agent $i$ and $\bs x = \col(x_1, \ldots, x_N)$ denote the collective strategy profile. The cost function of agent $i$ is quadratic, convex in $x_i$, continuously differentiable and can be cast in a compact form as 
\begin{equation}
J_i(x_i, \bar{x}) = x_i^\top A_i x_i + b_i^\top x_i +(\Delta \bar{x})^\top x_i,
\end{equation}
where $A_i := \diag(a_{i,1},\ldots,a_{i,m},0,\ldots,0)$,  $\Delta= \diag(\0,I_m)$ and $b_i := \col(b_{i,1},\ldots,b_{i,m}, -d_{1},\ldots, -d_{n})$. The local feasible set of firm $i$ is non-empty (for an adequate choice of $u_{i,l}$'s ), convex, compact and reads as $\Omega_i := \{x_i \in \R^{2n} \, | \, \sum_{l=1}^n g_{i,l} \geq \sum_{l=1}^n s_{i,l}, \; g_{i,j}, s_{i,j} \geq 0, \; g_{i,l} \leq u_{i,l}, \;l = 1, \ldots, m, \}$. 

The coupling constraints are affine and can be written in compact form as in \eqref{eq:Game}, with $C_i =
\left[
\begin{smallmatrix}
\0 & I_m \\
\0 & -I_m
\end{smallmatrix}
\right]
$ and $c_i = \frac{1}{N} \col(r_1,\ldots,r_m, -d_1, \ldots, -d_m)$, for all $i \in \mc I$. Thus, Assumption \ref{ass:ConvCompSet} is satisfied.

The pseudo gradient mapping $F$ is affine and reads as
\begin{align} \textstyle
F(\bs x) = P \bs x
 + b,
\end{align}
with
\begin{equation} \textstyle \label{eq:matrixP}
P = 2A + \frac{1}{N}I\otimes \Delta + \frac{1}{N}(\1 \1^\top \otimes \Delta),
\end{equation}
$A=\textrm{blkdiag}(A_1,\ldots,A_N)$ and $b = \col(b_i,\ldots,b_N)$.
By a direct inspection of the eigenvalues of $P$, we can show that $F$ is strongly monotone and Lipschitz continuous, when the coefficients $a_{i,j}$'s are positive. Hence, Assumption \ref{ass:COCO} is satisfied. In particular, it follows by \cite[p.79]{facchinei2007finite} that $F$ is $\chi-$cocoercive with $\chi := \| P\|^{-1}$. Moreover, since $F$ is strongly monotone and the sets $\Omega_i$ are compact, it follows by Remark \ref{rem:REU} that there exists a unique v-GNE.
The mapping $\tilde{F}$ is affine and reads as
\begin{align*} \textstyle
\bs F(\bs x,\bs \sigma) = (2A + \frac{1}{N}I\otimes \Delta) \bs x
+ (I \otimes \Delta) \bs \sigma + b.
\end{align*}
Similarly, it can be shown that $\bs F$ is $L_{\bs F}-$Lipschitz continuous with $L_{\bs F} := \max_{ij} \{a_{i,j}, 1 \}$. Thus, Assumption \ref{ass:LC-TF} is satisfied.

\subsection{Simulations studies}
\label{subsec:SR}
In our numerical study we consider a network Nash-Cournot game played by $20$ firms, i.e., $N=20$, over $10$ markets, i.e., $m=10$. All the parameters of the game are drawn from uniform distributions and fixed over the course of the entire simulations. Specifically, for all $i \in \mc I$ and $l \in \{1,\ldots,m \}$, we set the parameters of production cost in \eqref{eq:ProdCost} as $a_{i,l} \in \mc U(2,3)$ and $b_{i,l} \in \mc U(2,12)$, where $\mc U(t,\tau)$ denotes the uniform distribution over an interval $[t,\tau]$ with $t < \tau$. We set the production capacities of firm $i$ as $u_{i,l} \in \mc U(50,100)$ for all $l \in \{1, \ldots,n\ \}$ and for all $i \in \mc I$. Moreover, the demand at market $l$ is set as $d_l \in \mc U(90,100)$, while the market capacity as $r_l \in \mc U (d_l,2 d_l)$ for all $l \in \{1, \ldots,m\ \}$.

\smallskip
At each iteration $k$, the firms communicate according to a randomly generated and connected small world, where each node has 4 neighbors. To create a doubly stochastic mixing matrix $W(k)$, we exploit the Metropolis weighting rules in \eqref{eq:Metropolis}. Thus, Assumptions \ref{ass:ConnGra} and \ref{ass:DSMM} are satisfied.
The agents update their decisions and their estimates as in Algorithm 3. The step-sizes $\{\alpha_i, \beta_i \}_{i \in \mc I}$ are set according to Assumption \ref{ass:SS-choice1}, where the global parameter $\tau$ is set $5 \%$ larger than the theoretical lower bound $\frac{1}{2\delta}$, where $\delta = \min\{1, \|P\| \}$ and $P$ as in \eqref{eq:matrixP}.

\smallskip
In Figure \ref{fig:compAlg}, we show the trajectories of the sequences of normalized residuals $\|\bs x^k - \bs x^* \|/ \| \bs x^0-\bs x^* \|$ for different choices of the step-size sequence $(\gamma^k)_{k\in \bb N}$.
Moreover, we compare the trajectories of Algorithm 3 with those obtained with Algorithm 1 \cite[Alg. 1]{belgioioso2018projected}, which is a semi-decentralized algorithm and works under the assumption of full-decision information, i.e., the firms have access to the real aggregate information at each stage $k$ of the algorithm. As expected, the semi-decentralized algorithm converges faster than the fully-distributed counterpart. Interestingly, we notice that convergence is achieved also in the case of fixed relaxation step in the KM process, e.g. $\gamma^k = 1$ for all $k \geq 0 $, which is not supported by our theoretical analysis.

In Figure \ref{fig:compAlgCD}, we compare the trajectories of the consensus disagreement of the dual variables $\|(\L \otimes I_m) \bs \lambda^k\|$ for two choices of the step-size sequence $(\gamma^k)_{k\in \bb N}$.

\begin{figure}[t]
\includegraphics[width=\columnwidth]{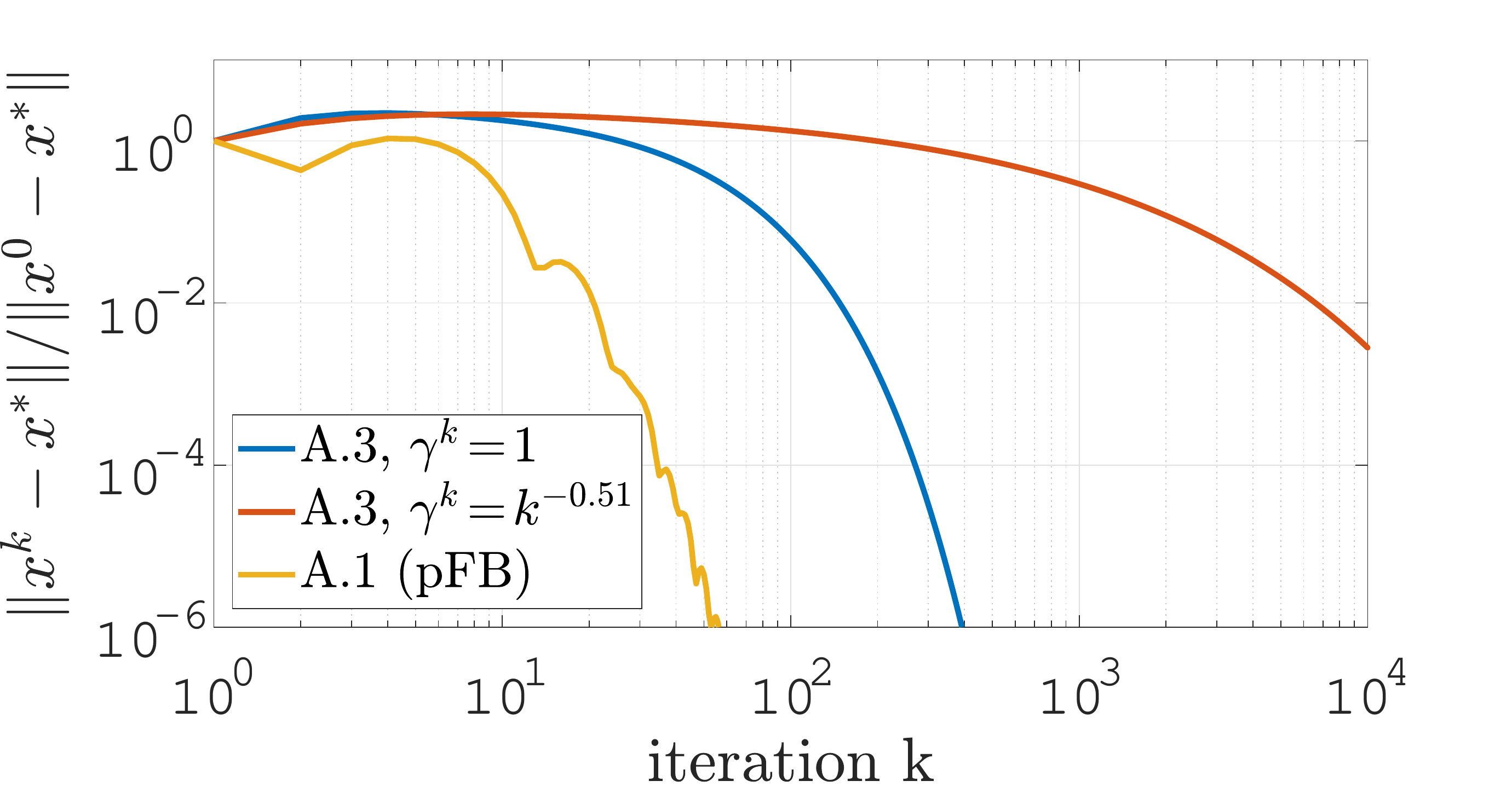}
\caption{The trajectories of the residual $\|\bs x^k - \bs x^* \|/\| \bs x^0 - \bs x^*  \|$ for pFB \cite[Alg. 1]{belgioioso2018projected},
Alg. 3 with $\gamma^k = k^{-0.51}$ and Alg. 3 with $\gamma^k =1$.}
\label{fig:compAlg}
\end{figure}
\begin{figure}[t]
\includegraphics[width=\columnwidth]{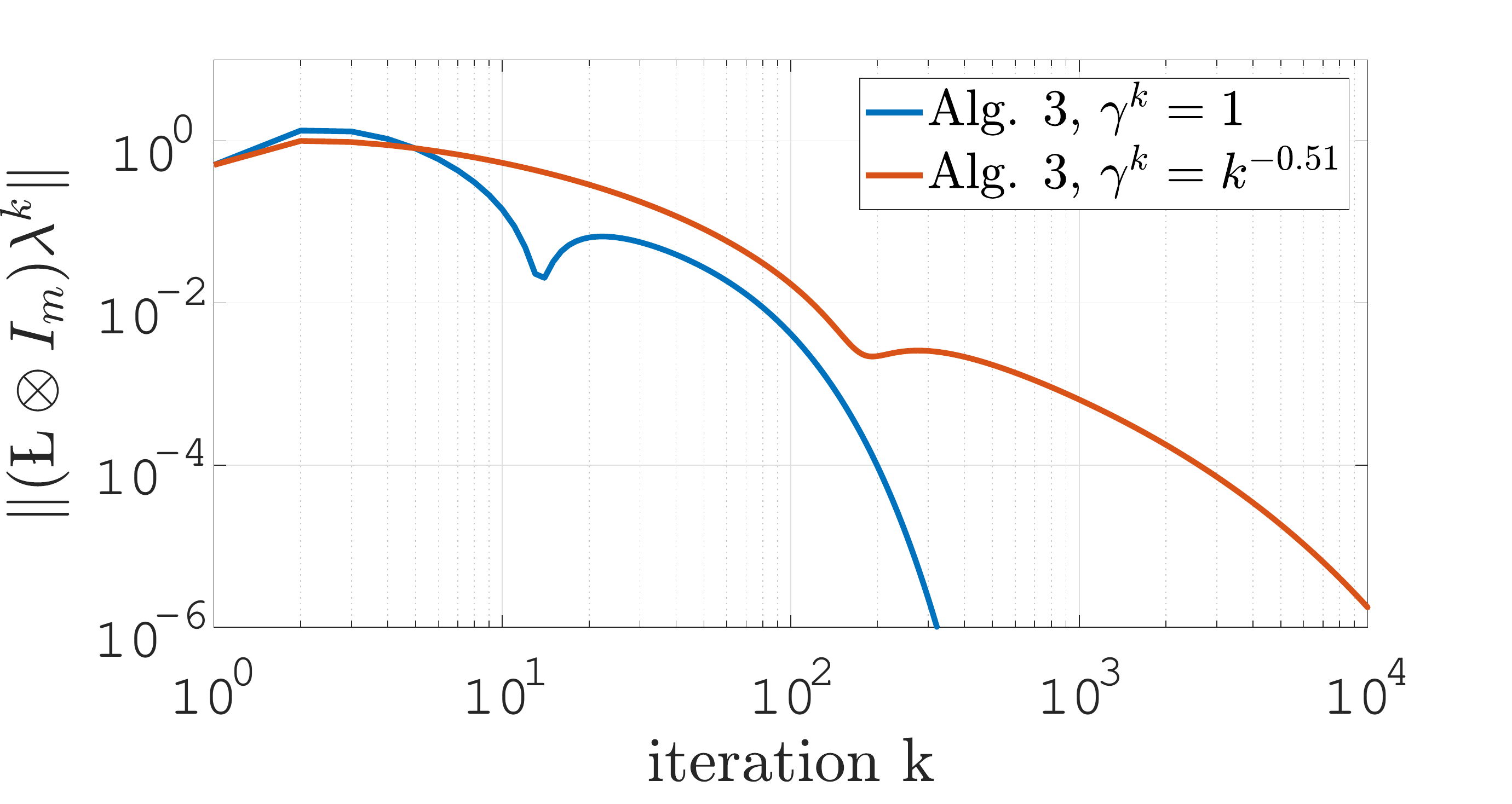}
\caption{The trajectories of the consensus disagreement $\|(\L \otimes I_m) \bs \lambda^k\|$ for Alg. 3 with $\gamma^k = k^{-0.51}$ and Alg. 3 with $\gamma^k =1$.}
\label{fig:compAlgCD}
\end{figure}	
\begin{figure}[h]
\includegraphics[width=\columnwidth]{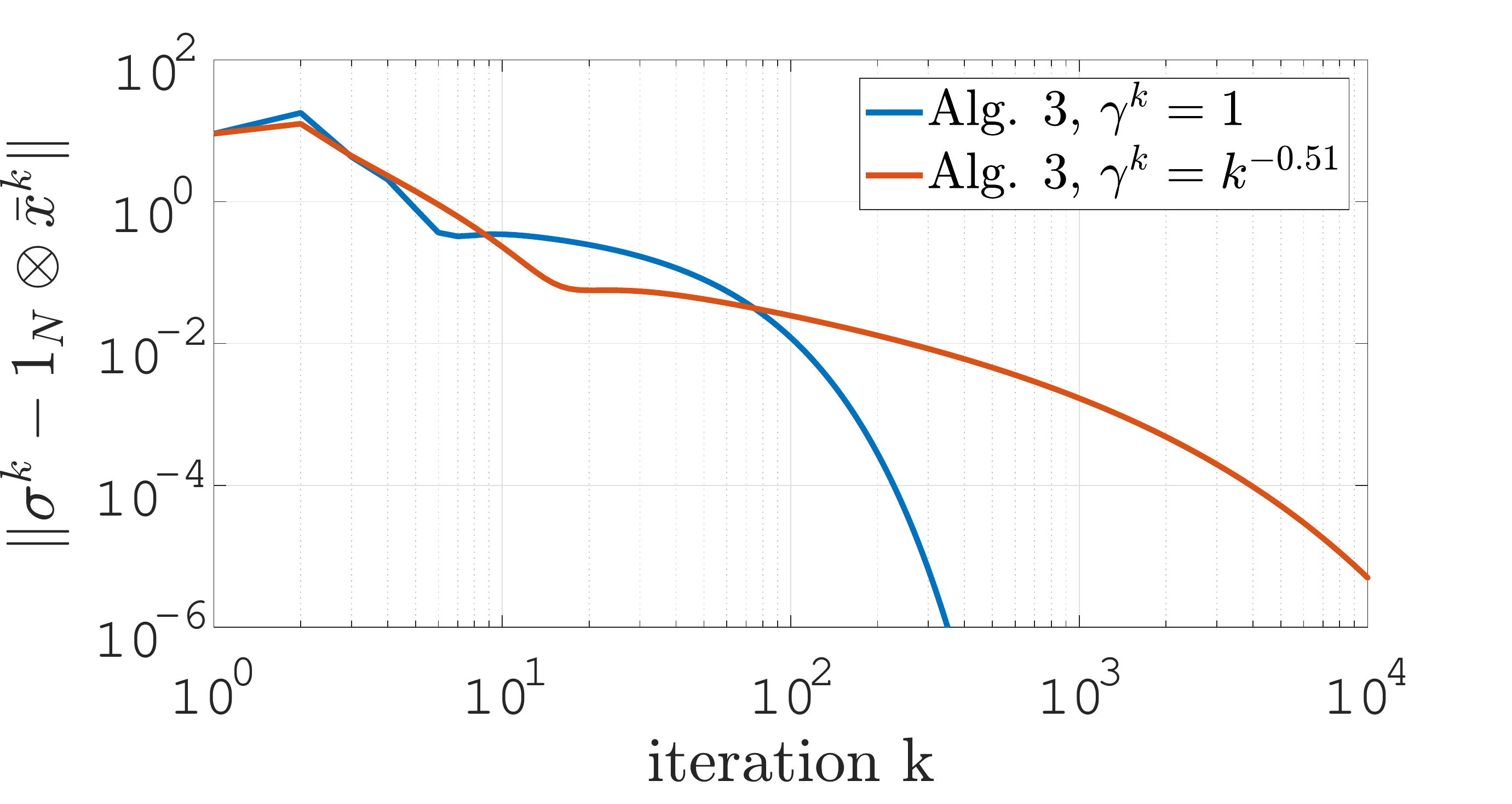}
\caption{The trajectories of the overall tracking error $\|\bs \sigma^k - \1  \otimes \bar x^k\|$ for Alg. 3 with $\gamma^k = k^{-0.51}$ and Alg. 3 with $\gamma^k =1$.}
\label{fig:compAlgTE}
\end{figure}
\begin{figure}[h]
\includegraphics[width=\columnwidth]{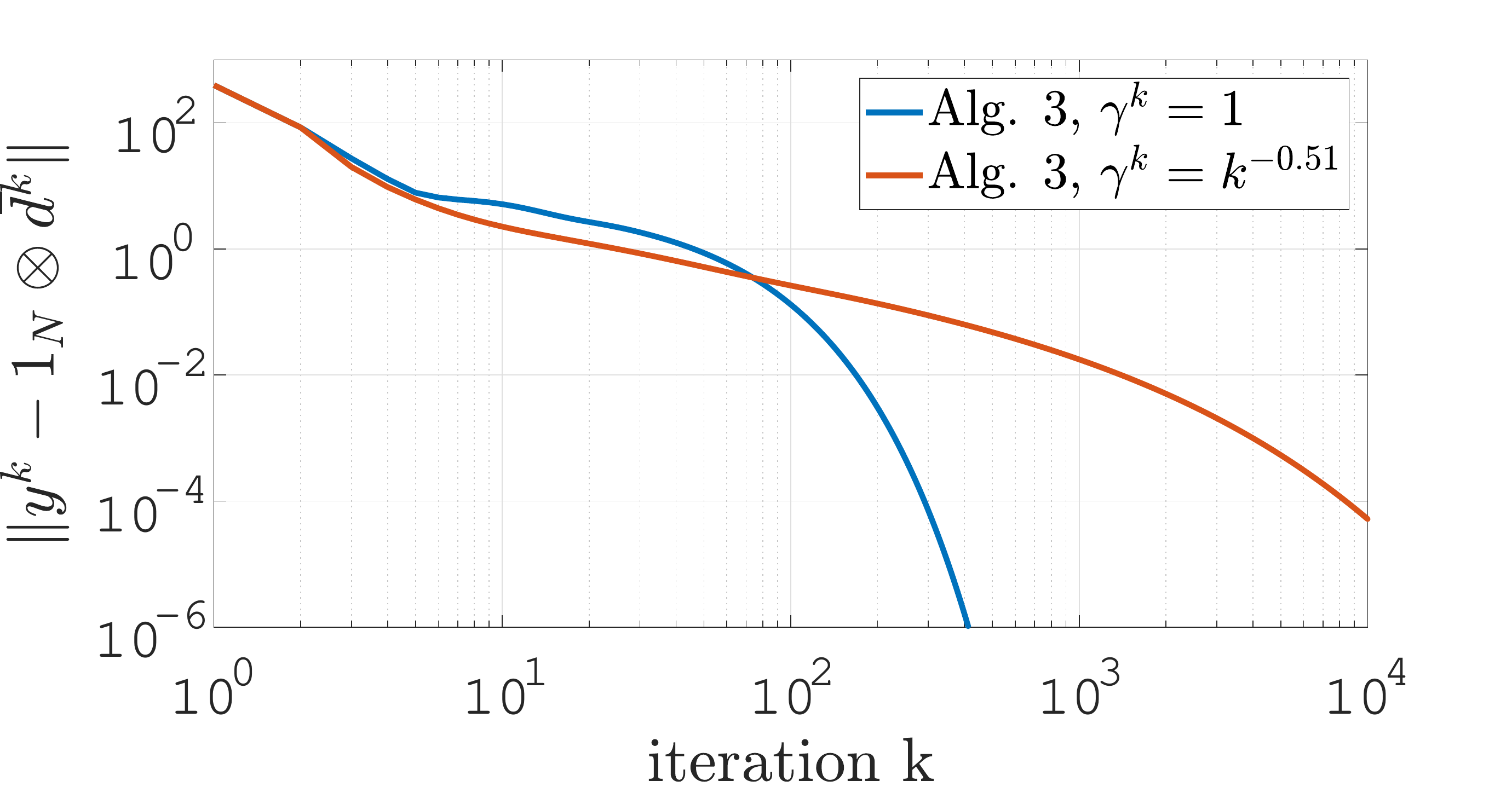}
\caption{The trajectories of the overall tracking error $\|\bs y^k - \1  \otimes \bar d^k\|$ for Alg. 3 with $\gamma^k = k^{-0.51}$ and Alg. 3 with $\gamma^k =1$.}
\label{fig:compAlgTECV}
\end{figure}	

\section{Conclusion}
\label{sec:Concl}
For a general class of aggregative games with linear coupling constraints over time-varying communication networks, we have designed the first single-layer, fully-distributed algorithm to compute a variational generalized Nash equilibrium. Global convergence can be established via monotone-operator-theoretic and fixed-point arguments, integrated with a dynamic tracking methodology. 

The analysis approach in this paper is genuinely novel, hence opens up a number of new research directions. Motivated by the numerical results of Section \ref{sec:NS}, it would be valuable to explore the computational aspects of the proposed method, e.g. how the connectivity of the communication networks influences the convergence speed. 
Whether or not the proposed algorithm converges with fixed step sizes in the Krasnosel'skii-Mann process is currently an open question. Finally, it would be highly valuable to relax the assumption of double-stochasticity of the mixing matrices.

\appendix

\subsection{Proof of Lemma \ref{lem:RSM}}
\label{proof:lemRSM}
(i) $T_2$ is the sum of two terms: $S$ in \eqref{eq:SandPhi} which is a linear, skew symmetric mapping, thus maximally monotone \cite[Ex. 20.30]{bauschke2017convex}; and $\nc_{\bs \Omega} \times \nc_{\R^{mN}_{\geq 0}}$ which is maximally monotone since  is the direct sum of maximally monotone operators \cite[Prop. 20.23]{bauschke2017convex} (i.e, the normal cones of the closed convex sets $\bs \Omega$ and $\R^{mN}_{\geq 0}$). Hence, the maximal monotonicity of $S+\nc_{\bs \Omega} \times \nc_{\R^{mN}_{\geq 0}}=T_2$ follows by \cite[Cor. 24.4 (i)]{bauschke2017convex} since $\dom(S)=\R^{(n+m)N}$.

(ii) 
%
$F$ is $\chi-$cocoercive, by Assumption \ref{ass:COCO}, and $\L_m$ is $1-$cocoercive by \cite[p.79]{facchinei:pang}, since $\L_m$ is a linear, positive semi-definite mapping with $\|\L_m\| = 1$. It follows that the direct sum $T_1(\cdot) = F(\cdot) \times (\L_m \cdot + \frac{1}{N} c_{\text f})$ is $\delta-$cocoercive, for all $\delta $ such that $0 < \delta \leq \min \{ 1,\chi \}$.
Now, we show that $T_1$ is restricted-strictly monotone w.r.t. $\Theta^\parallel = \bs \Omega \times \bs E^\parallel$. \blue{Let us recall that $\bs{E}^{\parallel}$ and $\bs{E}^{\perp}$ are the ($m$-dimensional) consensus and disagreement subspaces, respectively. Moreover, each vector $v \in \R^m$, can be split as $v=v_\parallel+v_\perp$, with $v_\parallel \in \bs{E}^{\parallel}$ and $v_\perp \in \bs{E}^{\perp}$}.
Consider now $\bs \omega = \col(\bs x, \bs \lambda) \not \in \Theta^\parallel$, hence $\bs \lambda = \bs \lambda_\parallel + \bs \lambda_\perp$, with $\bs \lambda_\parallel \in \bs{E}^{\parallel}$ and $\0 \neq \bs \lambda_\perp \in \bs{E}^{\perp}$.
Let $\bs \omega'=\col(\bs x',\bs \lambda') \in \Theta^\parallel$, hence $\bs \lambda' = \bs \lambda'_\parallel \in \bs{E}^{\parallel}$ and $\bs \lambda'_\perp = \0$. The following inequalities show that $T_1$ in \eqref{eq:T2} is restricted-strictly monotone w.r.t. $\Theta^\parallel$:
\begin{align*} \textstyle
&\textstyle (T_2(\bs \omega)-T_2(\bs \omega'))^\top (\bs \omega - \bs \omega') \\
& \textstyle
= (F(\bs x)-F(\bs x'))^\top (\bs x - \bs x') 
+ (\bs \lambda - \bs \lambda')^\top \L_m(\bs \lambda - \bs \lambda')\\
 &\textstyle 
 \ge \chi \|F(\bs x) - F(\bs x') \|^2 +  ( \bs \lambda_\perp)^\top \L_m \bs \lambda_\perp\\
 & \textstyle 
 \geq \text{eig}_2(\L)\|\bs \lambda_\perp \|^2>0,
\end{align*}
where $\L_m = (\L \otimes I_m)$, \blue{with $\L$ projection onto the disagreement subspace} and $\text{eig}_2(\L)=1$ is the second smallest eigenvalue of $\L = I -\frac{1}{N} \1 \1^\top$. The first inequality follows by the cocoercivity of $F$ (Assumption \ref{ass:COCO}) and since $\L_m \bs \lambda_\parallel = \L_m \bs \lambda' =0$, \blue{
namely, the projection onto the disagreement subspace of the consensual terms is zero}. 

(iii): The maximal monotonicity of $T=T_1+T_2$ follows by \cite[Cor. 24.4 (i)]{bauschke2017convex}, since $T_1$ is cocoercive (thus maximally monotone \cite[Example 20.31]{bauschke2017convex}), $T_2$ is maximally monotone and $\dom(T_1)=\R^{(n+m)N}$.
Moreover, since $T_1$ is also restricted-strictly monotone with respect to $\Theta_\parallel$ then $T $ enjoys the same property.
{\hfill $\blacksquare$}

\subsection{Proof of Proposition \ref{pr:EOC}}
\label{prof:prEOC}
(i) By Proposition \ref{pr:UvGNE}, there exists $ \lambda^* \in \R^m_{\geq 0} $ such that $\col(\bs x^*,\lambda^*)\in \zer(U)$, where $\bs x^*$ is a v-GNE. Define $\bs \omega^* = \col(\bs x^*,\bs \lambda^*)$, with $\bs \lambda^*=\1_N \otimes \lambda^*$, then we have $T(\bs \omega^*) \ni \0$. In fact, each component of the first row block of $T(\bs \omega^*)$ reads as $\nc_{\Omega_i}({x}^*_i) +\nabla_{x_i} J_i(x_i^*,\bar{x}^*) +  C_i^\top \lambda^* \ni \0$. While, each component of the second row block of $T(\bs \omega^*)$ reads as $\nc_{\R^{m}_{\geq 0}}( \lambda^*)  - \frac{1}{N}( C \bs x - c) \ni \0$, since $\nc_{\R^{m}_{\geq 0}}( \lambda^*)  - ( C \bs x^* - c) \ni \0$ and $\frac{1}{N} \nc_{\R_{\ge 0}^m} = \nc_{\R_{\ge 0}^m}$. Hence, $\zer(T)\neq \varnothing$.

(ii) From the first part of the proof, we know that there exists $\bs \omega^* \in \Theta^\parallel$ such that $\bs \omega^* \in \zer(T)$.
Now, we show that all the zeros of $T$ lie in $\Theta^\parallel$. By contradiction, let $\bs \omega' \in \zer(T)$ and assume $\bs \omega' \notin \Theta^\parallel$. Then, $\0 \in T(\bs \omega^* )$, $\0 \in T(\bs \omega' )$ and
Lemma \ref{lem:RSM} (iii) yields $0= (\0-\0)^\top(\bs \omega^* -\bs \omega') > 0$, which is impossible. 
Therefore, $\bs \omega' \in \Theta^\parallel$, namely $ \bs \omega'=\col(\bs x', \1 \otimes \lambda')$. Now, by substituting $\bs \omega'$ into $T$ (since $(\L \otimes I_m)(\1 \otimes \lambda')=\0$) we recover that  $\bs \omega' \in \zer(T) \Rightarrow \col(\bs x', \lambda') \in \zer(U)$, which, by Proposition \ref{pr:UvGNE}, holds if and only if $\bs x'$ is a v-GNE.
%
{\hfill $\blacksquare$}

\subsection{Proof of Theorem \ref{th:conv-partDistr}}
\label{pr:proofTC}
To prove convergence of Algorithm 2 we follow the same technical reasoning of the proof in \cite[Alg. 1]{yi2019operator}. 
Specifically, the proof is divided in two parts to show that:
\begin{enumerate}[(1)]
\item
Algorithm 2 corresponds to the fixed-point iteration in \eqref{eq:R}, i.e., $\bs \omega^{k+1}= \bs \omega^k + \gamma^k(
R(\bs \omega^k)-\bs \omega^k)$, where $R:= (\Id+\Phi^{-1} T_2)^{-1}\circ(\Id-\Phi^{-1} T_1)$ is the so-called pFB operator.
\item If the step sizes are set as in Assumption \ref{ass:SS-choice1}, then $R$ is an averaged operator. Hence, \eqref{eq:R} globally converges to some $\bs \omega^*:=\col(\bs x^* \bs \lambda^*) \in \fix(R)$. Since $\fix(R) = \zer(T)$, with $T$ as in \eqref{eq:T}, then $\bs x^*$ is a v-GNE, by Proposition \ref{pr:EOC}.
\end{enumerate}

\smallskip
(1): Let us recast Algorithm 2 in a compact form as
\begin{align}
\textstyle
\label{eq:ALG2compact1}
&\tilde{\bs x}^{k} \displaystyle
= \proj_{\bs \Omega}\big( \bs x^k -  \alpha_{\text{d}}(  \bs F(\bs x^k, \bar{\bs x}^k) +  C^\top_{\text d} \bar {\bs \lambda}^k )\big) ,\\
\textstyle
\label{eq:ALG2compact2}
&\tilde{\bs \lambda}^{k} =\displaystyle
 \proj_{\R^{mN}_{\geq 0}} \big( \bs \lambda^k + \beta_{\text{d}}( \bar{\bs d}^{k}
 -\bs \lambda^k + \bar{\bs \lambda}^k ) \big)\\
 \label{eq:ALG2compact3}
& \bs x^{k+1} = \bs x^k + \gamma^k (\tilde{\bs x}^{k}- \bs  x^k),\\
 \label{eq:ALG2compact4}
& \bs \lambda^{k+1} = \bs \lambda^k + \gamma^k (\tilde{\bs \lambda}^{k}- \bs \lambda^k),
\end{align}
%
Since $\proj_{\bs \Omega} = (\Id+\nc_{\bs \Omega})^{-1}$, $\bs F(\bs x^k, \1 \otimes \bar{ x}^k) = F(\bs x^k)$ and $C^\top_{\text d} \bar {\bs \lambda}^k =C_{\text d}^\top (\1 \otimes \bar \lambda^k) = \frac{1}{N} C_{\text{f}}^\top \bs \lambda^{k}$, it follows from \eqref{eq:ALG2compact1} that $(\Id+\nc_{\bs \Omega})(\tilde{\bs x}^{k}) \ni \bs x^k - \alpha_{\text{d}}( F(\bs x^k)+ \frac{1}{N} C_{\text{f}}^\top \bs \lambda^k )$, which leads to
\begin{multline} \label{eq:Alg2-X}
\textstyle
-F(\bs x^k) \in \nc_{\bs \Omega}(\tilde{\bs x}^{k}) + \frac{1}{N}  C_{\text f}^\top \tilde{\bs \lambda}^{k}  \\
\textstyle
+ \alpha_{\text d}^{-1}(\tilde{\bs x}^{k}-\bs x^k)
- \frac{1}{N} C_{\text{f}}^\top (\tilde{\bs \lambda}^{k} -\bs \lambda^k),
\end{multline}
where we used $ \alpha_{\text d}^{-1} \nc_{\bs \Omega}(\tilde{\bs x}^{k}) = \nc_{\bs \Omega}(\tilde{\bs x}^{k})$. Similarly, since $ \1 \otimes \bar{d}^k= \frac{1}{N}(2  C_{\text{f}} \tilde{\bs x}^{k} -  C_{\text{f}} \bs x^k- c_{\text{f}})$ and $\bs \lambda^k - \bar{\bs \lambda}^k = ((I - \frac{1}{N} \1 \1^\top)\otimes I_m) \bs \lambda^k = (\L \otimes I_m)\bs \lambda^k = \L_m \bs \lambda^k$, it follows from \eqref{eq:ALG2compact2} that $(\Id+\nc_{\R^{mN}_{\geq 0}})(\tilde{\bs \lambda}^{k}) \in \bs \lambda^k +  \beta_{\text d} ( \frac{1}{N}(2  C_{\text{f}} \tilde{\bs x}^{k} -  C_{\text{f}} \bs x^k- c_{\text{f}} ) - \L_m \bs \lambda^k)$, which leads to
\begin{multline} \label{eq:Alg2-L}
\textstyle
- \L_m \bs \lambda^k - \frac{1}{N} c_{\text{f}}
\in \nc_{\R^{mN}_{\geq 0}}(\tilde{\bs \lambda}^{k}) - \frac{1}{N} C_{\text{f}} \tilde{\bs x}^{k}   \\
\textstyle
- \frac{1}{N}C_{\text{f}}(\tilde{\bs x}^{k}-\bs x^{k}) +  \beta_{\text d} ^{-1}(\tilde{\bs \lambda}^{k}-\bs \lambda^{k} ).
\end{multline}
Let $\bs \omega^k := \col(\bs x^k,\bs \lambda^k)$, then the inclusions in \eqref{eq:Alg2-X}$-$\eqref{eq:Alg2-L} can be cast in compact form as 
\begin{align} \label{eq:Alg2-incl-comp}
\textstyle
-T_1(\bs \omega^k) \in T_2(\tilde{\bs \omega}^{k}) +\Phi (\tilde{\bs \omega}^{k}- \bs \omega^k),
\end{align}
where $T_1$, $T_2$ and $\Phi$ as in \eqref{eq:T1}, \eqref{eq:T2} and \eqref{eq:SandPhi}, respectively.
By making $\tilde{\bs \omega}^{k}$ explicit in \eqref{eq:Alg2-incl-comp}, we obtain
\begin{align}
\tilde{\bs \omega}^{k} = (\Id+\Phi^{-1} T_2)^{-1}\circ(\Id-\Phi^{-1} T_1)(\bs \omega^{k}),
\end{align}
which corresponds to $\tilde{\bs \omega}^{k} = R (\bs \omega^{k})$, where $R$ is the pFB operator in \eqref{eq:Rmap}. Finally, it follows by \eqref{eq:ALG2compact3}$-$\eqref{eq:ALG2compact4} that $\bs \omega^{k+1}= \bs \omega^k + \gamma^k(
R(\bs \omega^k)-\bs \omega^k)$, which concludes the proof.

\smallskip
(2): Next, we introduce some technical statements that we exploit later on in this proof.


\smallskip
\begin{lemma} \label{lem:PMa}
Let the step-sizes $\{\alpha_i,\beta_i\}_{i \in \mc I}$ satisfy Assumption \ref{ass:SS-choice1}. Then the following statements hold:
\begin{enumerate}[(i)]
\item $\Phi - \tau I \succeq 0$, with $\tau$ as in Assumption \ref{ass:SS-choice1},
\item $\|\Phi^{-1}\| \leq \tau^{-1} $.
{\hfill $\square$}
\end{enumerate}
\end{lemma}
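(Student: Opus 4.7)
The plan is to leverage the diagonal-dominance structure induced by the step-size choices in Assumption \ref{ass:SS-choice1}, and then deduce (ii) as an immediate consequence of (i) together with the symmetry of $\Phi$.

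First I would observe that, by the step-size bounds, $\alpha_i^{-1} - \tau \geq \|C_i\|$ for every $i \in \mc I$ and $\beta_i^{-1} - \tau \geq \tfrac{1}{N}\sum_{j=1}^N \|C_j\|$ for every $i \in \mc I$. Hence, for any test vector $v = \col(\bs x, \bs \lambda) \in \R^{(n+m)N}$, the quadratic form $v^\top(\Phi - \tau I) v$ satisfies
\begin{align*}
v^\top (\Phi - \tau I) v \geq \sum_{i \in \mc I} \|C_i\| \|x_i\|^2 + \tfrac{1}{N} \Big( \sum_{j \in \mc I} \|C_j\| \Big) \sum_{i \in \mc I} \|\lambda_i\|^2 - \tfrac{2}{N} \bs x^\top C_{\text f}^\top \bs \lambda.
\end{align*}
The main task is therefore to show that the cross term is dominated by the two diagonal contributions. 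Expanding $\bs x^\top C_{\text f}^\top \bs \lambda = \sum_{i,j} x_j^\top C_j^\top \lambda_i$ and applying the Cauchy--Schwarz inequality followed by Young's inequality $2\|x_j\|\|\lambda_i\| \leq \|x_j\|^2 + \|\lambda_i\|^2$, the cross term is bounded above by $\sum_j \|C_j\|\|x_j\|^2 + \tfrac{1}{N}(\sum_j\|C_j\|)\sum_i \|\lambda_i\|^2$, which exactly cancels the two diagonal contributions above. This yields $v^\top(\Phi - \tau I)v \geq 0$, i.e.\ statement (i).

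For (ii), I would note that $\Phi$ is symmetric by construction (the diagonal blocks are diagonal and the off-diagonal blocks are transposes of each other), and statement (i) gives $\Phi \succeq \tau I \succ 0$ since $\tau > \tfrac{1}{2\delta} > 0$. Hence $\Phi$ is invertible with smallest eigenvalue at least $\tau$, so $\Phi^{-1} \preceq \tau^{-1} I$ and, since $\Phi^{-1}$ is symmetric, $\|\Phi^{-1}\| = \lambda_{\max}(\Phi^{-1}) \leq \tau^{-1}$.

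The only nontrivial step is the Young-type bound on the cross term in (i); the main technical subtlety lies in matching the diagonal coefficients $\|C_i\|$ (rows indexed by $i$) and $\tfrac{1}{N}\sum_j\|C_j\|$ (rows indexed by $j$) to the double sum $\sum_{i,j}\|C_j\|\|x_j\|\|\lambda_i\|$, which is precisely where the asymmetric choice of the two step-size bounds in Assumption \ref{ass:SS-choice1}(i)--(ii) becomes essential.
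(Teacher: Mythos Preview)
Your proof is correct and takes a genuinely different route from the paper's. The paper proves (i) by invoking the generalized (block) Gershgorin circle theorem \cite[Th.~2]{feingold1962block}: each eigenvalue $\mu$ of $\Phi$ lies in at least one Gershgorin disc, and the step-size bounds in Assumption~\ref{ass:SS-choice1} ensure that every such disc is contained in $[\tau,\infty)$, whence $\mu_{\min}(\Phi)\geq\tau$. You instead work directly with the quadratic form $v^\top(\Phi-\tau I)v$, reduce it to a diagonal-dominance inequality, and close it with Cauchy--Schwarz plus Young. Both arguments exploit the same structural fact---the off-diagonal blocks are dominated by the diagonal margins $\alpha_i^{-1}-\tau$ and $\beta_i^{-1}-\tau$---but yours is more self-contained (no external Gershgorin reference needed) and makes explicit why the \emph{asymmetric} step-size conditions $(\|C_i\|+\tau)^{-1}$ versus $(\tfrac{1}{N}\sum_j\|C_j\|+\tau)^{-1}$ are exactly what is required. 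The Gershgorin route is slightly shorter once the theorem is in hand. For (ii), both proofs are essentially identical: symmetry of $\Phi$ plus $\mu_{\min}(\Phi)\geq\tau$ immediately give $\|\Phi^{-1}\|=1/\mu_{\min}(\Phi)\leq\tau^{-1}$.
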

\begin{proof} (i):
By the generalized Gershgorin circular theorem \cite[Th.~2]{feingold1962block}, each eigenvalue $\mu$ of the matrix $\Phi$ in \eqref{eq:SandPhi} satisfies at least one of the following inequalities:
\begin{align} \label{eq:GGCT1}
\mu & \geq \alpha_i^{-1} - \| C_{i}^\top\| , \quad  &\forall i \in \mc I,\\
\mu & \geq \textstyle \beta_i^{-1} - \frac{1}{N} \sum_{j = 1}^N \| C_{j}^\top\|, \quad  &\forall i \in \mc I. 
\label{eq:GGCT3}
\end{align}
Hence, if we set the step-sizes $\alpha_i,\beta_i$ as in Assumption \ref{ass:SS-choice1}, the inequalities \eqref{eq:GGCT1}-\eqref{eq:GGCT3} yield to $\mu \geq \tau $. It follows that the smallest eigenvalue of $\Phi$, i.e., $\mu_{\min}(\Phi)$, satisfies $\mu_{\min}(\Phi) \geq \tau>0$. Thus, $\Phi-\tau I$ is positive semi-definite.

(ii): Let $\mu_{\max}(\Phi)$ be the largest eigenvalue of $\Phi$. We have that $\mu_{\max}(\Phi)\ge \mu_{\min}(\Phi) \geq \tau$. Moreover, $\| \Phi \|=\mu_{\max}(\Phi) \ge \mu_{\min}(\Phi) = \frac{1}{\| \Phi^{-1} \|} \geq \tau$. Hence $\| \Phi^{-1} \| \leq \tau^{-1}$. 
\end{proof}

\smallskip
\begin{lemma} \label{lem:T12AV}
Let Assumptions \ref{ass:ConvCompSet} and \ref{ass:COCO} hold and the step-sizes $\{\alpha_i,\beta_i\}_{i \in \mc I}$ satisfy Assumption \ref{ass:SS-choice1}. The following properties hold in the $\Phi$-induced norm (i.e., $\|\cdot\|_\Phi$):
\begin{enumerate}[(i)]
\item $\Phi^{-1} T_1$ is $\delta \tau-$cocoercive and $(\Id-\Phi^{-1} T_1)$ is $\frac{1}{2 \delta \tau}-$averaged;
\item $\Phi^{-1} T_2$ is maximally monotone and $(\Id-\Phi^{-1} T_2)^{-1}$ is $\frac{1}{2}-$averaged.
{\hfill $\square$}
\end{enumerate}
\end{lemma}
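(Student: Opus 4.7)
The plan is to exploit the fact that both cocoercivity and maximal monotonicity transfer neatly when we switch from the standard Euclidean inner product to the $\Phi$-induced inner product $\langle \cdot, \cdot \rangle_\Phi := \langle \cdot, \Phi \cdot \rangle$ (well-defined because $\Phi \succ 0$ by Lemma \ref{lem:PMa}(i)), and then use the classical equivalence between cocoercivity of $F$ and averagedness of $\mathrm{Id}-F$ (see \cite[Prop.~4.39]{bauschke2017convex}), and between maximal monotonicity of an operator and firm nonexpansiveness of its resolvent (\cite[Prop.~23.8]{bauschke2017convex}). The key numerical ingredients are $\Phi \succeq \tau I$ and $\|\Phi^{-1}\| \le \tau^{-1}$, already established in Lemma \ref{lem:PMa}.

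For (i), I will start from the $\delta$-cocoercivity of $T_1$ in the standard norm, guaranteed by Lemma \ref{lem:RSM}(ii). For any $\bs\omega, \bs\omega'$, I write
\begin{align*}
\langle \Phi^{-1}T_1(\bs\omega) - \Phi^{-1}T_1(\bs\omega'), \bs\omega - \bs\omega'\rangle_\Phi
&= \langle T_1(\bs\omega) - T_1(\bs\omega'), \bs\omega - \bs\omega'\rangle \\
&\ge \delta \| T_1(\bs\omega) - T_1(\bs\omega')\|^2.
\end{align*}
Then I bound $\|u\|^2 \ge \tau \, u^\top \Phi^{-1} u = \tau \|\Phi^{-1} u\|_\Phi^2$ (applied with $u = T_1(\bs\omega)-T_1(\bs\omega')$, using $\|\Phi^{-1}\|\le \tau^{-1}$), obtaining $\delta\tau$-cocoercivity of $\Phi^{-1}T_1$ under $\|\cdot\|_\Phi$. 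The averagedness of $\mathrm{Id}-\Phi^{-1}T_1$ with parameter $\tfrac{1}{2\delta\tau}$ follows from the standard equivalence, noting that Assumption \ref{ass:SS-choice1} forces $2\delta\tau > 1$ so that $\tfrac{1}{2\delta\tau} \in (0,1)$.

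For (ii), a similar change of inner product handles $T_2$: for $\bs u \in T_2(\bs\omega)$, $\bs v \in T_2(\bs\omega')$,
\begin{equation*}
\langle \Phi^{-1}\bs u - \Phi^{-1}\bs v, \bs\omega - \bs\omega'\rangle_\Phi
= \langle \bs u - \bs v, \bs\omega - \bs\omega'\rangle \ge 0,
\end{equation*}
by the maximal monotonicity of $T_2$ from Lemma \ref{lem:RSM}(i); maximality is preserved because $\Phi$ is a bounded invertible linear map (so $\mathrm{range}(\mathrm{Id}+\Phi^{-1}T_2) = \Phi^{-1}\mathrm{range}(\Phi+T_2) = \R^{(n+m)N}$ by Minty's theorem). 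The resolvent $(\mathrm{Id}+\Phi^{-1}T_2)^{-1}$ is therefore firmly nonexpansive, hence $\tfrac{1}{2}$-averaged, in the $\|\cdot\|_\Phi$-norm. The only mildly delicate point I anticipate is keeping the inner product explicit throughout, since averagedness and cocoercivity constants are norm-dependent; once the bookkeeping in $\|\cdot\|_\Phi$ is done consistently, both claims follow from textbook equivalences.
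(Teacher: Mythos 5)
Your proposal is correct and follows essentially the same route as the paper's proof: the same change of inner product using $\Phi \succeq \tau I$ and $\|\Phi^{-1}\| \le \tau^{-1}$ from Lemma \ref{lem:PMa} to transfer the $\delta$-cocoercivity of $T_1$ into $\delta\tau$-cocoercivity of $\Phi^{-1}T_1$ in $\|\cdot\|_\Phi$, followed by the textbook equivalences with averagedness and firm nonexpansiveness of the resolvent. Your extra remark justifying preservation of maximality via Minty's theorem is a small elaboration of a step the paper only asserts, but the argument is otherwise identical.
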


\smallskip
\begin{proof}
(i): Since $T_1$ is single-valued and $\Phi^{-1}$ nonsingular, by Lemma \ref{lem:PMa} (i), for each $\bs \omega, \bs \omega' \in \bs \Omega \times \R^{nN}_{\geq 0}$
\begin{multline} \label{eq:cocoInQ} \textstyle
\|\Phi^{-1}T_1(\bs \omega)-\Phi^{-1}T_1(\bs \omega')\|^2_{\Phi}
= \|T_1(\bs \omega)-T_1(\bs \omega')\|^2_{\Phi^{-1}} \\
\textstyle
 \leq \|\Phi^{-1} \|\, \|T_1(\bs \omega)-T_1(\bs \omega') \|^2\\
\textstyle
\leq \frac{1}{\tau} \, \|T_1(\bs \omega)-T_1(\bs \omega') \|^2,
\end{multline}
where the last inequality follows by Lemma \ref{lem:PMa} (ii).
By \eqref{eq:cocoInQ} and the $\delta-$cocoercivity of $T_1$ (Lemma \ref{lem:RSM} (ii))
\begin{multline} \textstyle \label{eq:cocoInQ-2}
 \langle \Phi^{-1}T_1(\bs \omega) - \Phi^{-1}T_1(\bs \omega'),  \bs \omega - \bs \omega' \rangle_{\Phi}  = \\ 
 \textstyle  \langle T_1(\bs \omega) - T_1(\bs \omega'),  \bs \omega - \bs \omega' \rangle 
\ge  \delta \|T_1(\bs \omega)-T_1(\bs \omega') \|^2 \\
\textstyle
\ge  \delta  \tau \|\Phi^{-1} T_1(\bs \omega)-\Phi^{-1} T_1(\bs \omega') \|^2_{\Phi}.
\end{multline}
In other words, $\Phi^{-1} T_1$ is $\delta \tau-$cocoercive in the $\Phi-$induced norm. It follows from \cite[Prop. 4.33]{bauschke2017convex} that $(\Id-\Phi^{-1}T_1)$ is $\frac{1}{2\delta \tau}-$averaged in the $\Phi-$induced norm.

(ii):
$\Phi^{-1} T_2$ is maximally monotone in the $\Phi-$induced norm, since $T_2$ is maximally monotone by Lemma \ref{lem:RSM} (i). By \cite[Prop.~23.7]{bauschke2017convex}, the resolvent mapping $(\Id + \Phi^{-1}T_2)$ is $\frac{1}{2}-$averaged (or firmly-nonexpansive, see \cite[Remark~4.24]{bauschke2017convex}) in the $\Phi-$induced norm, since $\Phi^{-1}T_2$ is maximally monotone in the same norm.
\end{proof}

\smallskip
\begin{lemma}\label{lem:Raveraged}
Let Assumptions \ref{ass:ConvCompSet}, \ref{ass:COCO} hold and the step-sizes $\{\alpha_i,\beta_i\}_{i \in \mc I}$ satisfy Assumption \ref{ass:SS-choice1}. Then, the pFB operator $R=(\Id+\Phi^{-1} T_2)^{-1} \circ (\Id-\Phi^{-1} T_1)$ is 
$\nu-$averaged in the $\Phi-$induced norm (i.e., $\| \cdot \|_\Phi$), with $\nu := \frac{2\delta \tau}{4 \delta \tau-1} \in (\frac{1}{2},1)$.
{\hfill $\square$}
\end{lemma}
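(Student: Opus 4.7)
The plan is to derive the averagedness of $R$ by invoking a standard composition-of-averaged-operators identity applied to the two ingredients already characterized in Lemma~\ref{lem:T12AV}. Concretely, I would identify $A_1 := (\Id + \Phi^{-1} T_2)^{-1}$ and $A_2 := (\Id - \Phi^{-1} T_1)$, so that $R = A_1 \circ A_2$. Lemma~\ref{lem:T12AV}\,(ii) gives that $A_1$ is $\alpha_1$-averaged in the $\Phi$-induced norm with $\alpha_1 = \tfrac{1}{2}$, and Lemma~\ref{lem:T12AV}\,(i) gives that $A_2$ is $\alpha_2$-averaged in the same norm with $\alpha_2 = \tfrac{1}{2\delta\tau}$. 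Note that Assumption~\ref{ass:SS-choice1} imposes $\tau > \tfrac{1}{2\delta}$, so $\alpha_2 < 1$, which is the non-degeneracy condition needed to call $A_2$ averaged.

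The key step is the composition lemma (e.g.\ \cite[Prop.~4.46]{bauschke2017convex}): if $A_1$ is $\alpha_1$-averaged and $A_2$ is $\alpha_2$-averaged with $\alpha_1, \alpha_2 \in (0,1)$, then $A_1 \circ A_2$ is $\alpha$-averaged with
\begin{equation*}
\alpha \;=\; \frac{\alpha_1 + \alpha_2 - 2\alpha_1 \alpha_2}{1 - \alpha_1 \alpha_2}.
\end{equation*}
Plugging in $\alpha_1 = \tfrac{1}{2}$ and $\alpha_2 = \tfrac{1}{2\delta\tau}$ yields
\begin{equation*}
\alpha \;=\; \frac{\tfrac12 + \tfrac{1}{2\delta\tau} - \tfrac{1}{2\delta\tau}}{1 - \tfrac{1}{4\delta\tau}} \;=\; \frac{\tfrac12}{\tfrac{4\delta\tau - 1}{4\delta\tau}} \;=\; \frac{2\delta\tau}{4\delta\tau - 1} \;=\; \nu,
\end{equation*}
which is precisely the claimed averagedness constant. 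Since the composition lemma in \cite{bauschke2017convex} is stated for a generic Hilbert space inner product, I would just emphasize that we apply it in the Hilbert space $(\R^{(n+m)N}, \langle\cdot,\cdot\rangle_\Phi)$, which is legitimate because $\Phi \succ 0$ by Lemma~\ref{lem:PMa}\,(i), so $\|\cdot\|_\Phi$ is indeed a (genuine) norm coming from an inner product.

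Finally, to check $\nu \in (\tfrac12, 1)$: the condition $\nu > \tfrac12$ is equivalent to $4\delta\tau - 1 < 4\delta\tau$, which holds trivially, while $\nu < 1$ is equivalent to $2\delta\tau < 4\delta\tau - 1$, i.e.\ $\tau > \tfrac{1}{2\delta}$, which is exactly what Assumption~\ref{ass:SS-choice1} guarantees. There is no real obstacle here: once Lemma~\ref{lem:T12AV} is in place the proof is a one-line application of a textbook identity plus an arithmetic verification, so the proof write-up is essentially bookkeeping of the averagedness constant and a sanity check on the range of $\nu$.
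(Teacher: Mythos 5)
Your proof is correct and follows essentially the same route as the paper: both apply the composition-of-averaged-operators formula to $(\Id+\Phi^{-1}T_2)^{-1}$ ($\tfrac12$-averaged) and $(\Id-\Phi^{-1}T_1)$ ($\tfrac{1}{2\delta\tau}$-averaged) in the $\Phi$-induced norm, via Lemma~\ref{lem:T12AV}, and verify $\nu\in(\tfrac12,1)$ from $\tau>\tfrac{1}{2\delta}$. Your explicit remark that the result applies in the Hilbert space with inner product $\langle\cdot,\cdot\rangle_\Phi$ because $\Phi\succ 0$ is a welcome (if implicit in the paper) clarification.
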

\begin{proof}
By \cite[Proposition 4.4]{bauschke2017convex} , the mapping $R$ is $\left( \frac{2\delta \tau}{4 \delta \tau-1} \right)-$averaged with respect to $\| \cdot\|_{\Phi}$, since composition of $(\Id+\Phi^{-1} T_2)^{-1}$ and $(\Id-\Phi^{-1} T_1)$ which are $\frac{1}{2}-$ and $\frac{1}{2\delta \tau}-$averaged in $\|\cdot \|_{\Phi}$, respectively, by Lemma \ref{lem:T12AV}. Moreover, $ \frac{2\delta \tau}{4 \delta \tau-1} \in (\frac{1}{2},1)$, since $ \tau >\frac{1}{2\delta}$, by Assumption \ref{ass:SS-choice1}.
\end{proof}

\medskip
The fixed-point iteration \eqref{eq:R}, that corresponds to Algorithm 2 by the first part of this proof, is the Krasnosel'skii-Mann iteration on the mapping $R$, which is $\nu-$averaged, with $\nu \in (\frac{1}{2},1)$, by Lemma \ref{lem:Raveraged}. The convergence of \eqref{eq:R} to some $\bs \omega^*:=\col(\bs x^*,\bs \lambda^*) \in \fix(R)$ follows by \cite[Prop. 5.15]{bauschke2017convex}. To conclude, we note that $\bs \omega^* \in \fix(R)= \zer(\Phi^{-1} T_1+\Phi^{-1}T_2)$, by \cite[Prop. 25.1 (iv)]{bauschke2017convex}, and that $ \zer(\Phi^{-1}T_1+\Phi^{-1}T_2)= \zer(T)$, with $T$ as in \eqref{eq:T}, since $\Phi\succ 0$, by Lemma \ref{lem:PMa} (i), and $T_1+T_2=T$. Since the limit point $\bs \omega^* \in \zer(T) \neq \varnothing$, by Proposition \ref{pr:EOC} (i), then $\bs x^*$ is a v-GNE of the game in \eqref{eq:Game}, by Proposition \ref{pr:EOC} (ii), thus concluding the proof.
{\hfill $\blacksquare$}

\subsection{Proof of Lemma \ref{lem:ESTeRAV}}
\label{proof:lem:ESTeRAV}
We prove equation (i) by induction. At step zero, $\bar{\sigma}^0 =
\bar{x}^0$ holds if the estimates are initialized as $\sigma_i^0 = x_i^0$, for all $i \in \mc I$. At step $k$, we assume that $\bar{\sigma}^k = \bar{x}^k$. To conclude the proof, we show that relation (i) holds at step $k+1$:
\begin{align*}
\bar{\sigma}^{k+1} &= \textstyle
\frac{1}{N}(\1^\top\otimes I_n)((W(k) \otimes I_n) \bs \sigma^k + \bs x^{k+1}- \bs x^k), \\ 
&= \textstyle
\frac{1}{N}(\1^\top\otimes I_n) (W(k) \otimes I_n) \bs \sigma^k + \bar{x}^{k+1} -\bar{x}^k,\\
&= \bar \sigma^k + \bar{x}^{k+1} -\bar{x}^k = \bar{x}^{k+1}.
\end{align*}
The first equality follows from the updating rule of the $\sigma_i$'s in Algorithm 3, the second follows by definition of $\bar{x}^k$, i.e., $\bar{x}^k = \frac{1}{N}(\1^\top \otimes I_n) \bs x^k$, the third follows since the mixing matrix $W(k)$ is column stochastic, i.e., $\1^\top W(k) = \1^\top$, by Assumption \ref{ass:DSMM}, while the last equality follows from the induction step $k$, i.e., $\bar{\sigma}^k = \bar{x}^k$.
The proof of equations (ii) and (iii) are analogous.

\subsection{Proof of Lemma \ref{lem:SurrToReal}}
\label{proof:SurrToReal}
For easy of notation, this proof is developed for the scalar case, i.e., $n=m=1$. In this case, we can write  $\| \hat{\bs \sigma}^k - \1 \otimes \bar x^k \| = \| (W(k) \otimes I_n) \bs \sigma^k - \1 \otimes \bar x^k    \| = \| W(k) \bs \sigma^k - \bar x^k   \1 \|$.

\smallskip
(i): The update of the estimates $\sigma_i$'s in Algorithm 3 can be written in a compact form as
\begin{align} \label{eq:sigComp}
\bs \sigma^{k+1} = W(k) \bs \sigma^k + \bs x^{k+1}- \bs x^k.
\end{align}
By telescoping \eqref{eq:sigComp}, we obtain
\begin{align} \nonumber
\bs \sigma^{k+1} &= W(k)(W(k-1)\bs \sigma^{k-1} + \bs x^{k} - \bs x^{k-1})\\
\nonumber
&\quad + \bs x^{k+1} - \bs x^k\\[.2em]
\nonumber
&  = \Psi(k,k-1)\bs \sigma^{k-1} + \Psi(k,k)(\bs x^{k} - \bs x^{k-1}) \\
\nonumber
&\quad + \bs x^{k+1} - \bs x^k
\\[.2em]
\nonumber
&= \cdots \\[.2em]
\nonumber
&\textstyle = \Psi(k,0)\bs \sigma^{0} + \sum_{s=1}^k \Psi(k,s)(\bs x^{s} - \bs x^{s-1})\\
&\quad + \bs x^{k+1} - \bs x^k,
\label{eq:sigTelescop}
\end{align}
\blue{where the transition matrices $\Psi(\cdot,\cdot)$'s are defined in \eqref{eq:TransMatr}. 
By rearranging \eqref{eq:sigComp}, we can write $ W(k) \bs \sigma^k = \bs \sigma^{k+1} - \bs x^{k+1}+ \bs x^k $. Then, by exploiting the equivalence in \eqref{eq:sigTelescop}, we have}
\begin{align} \textstyle \label{eq:sigTelescop_W}
W(k) \bs \sigma^k= \Psi(k,0)\bs \sigma^{0} + \sum_{s=1}^k \Psi(k,s)(\bs x^{s} - \bs x^{s-1}).
\end{align}
Now, consider $\bar \sigma^k$, which may be written as follows:
\begin{equation*} \textstyle
\bar \sigma^k = \bar \sigma^{k-1} + (\bar \sigma^{k}-\bar \sigma^{k-1})=	\bar{\sigma}^0 + \sum_{s=1}^k(\bar \sigma^{s}-\bar \sigma^{s-1}).
\end{equation*}
By Lemma \ref{lem:ESTeRAV}, we have that $\bar{\sigma}^s = \bar{x}^s$  $\forall s \geq 0 $, which leads to
\begin{align}
\textstyle
\nonumber
\bar x^k = \bar \sigma^k &
\textstyle
= \bar{\sigma}^0  + \sum_{s=1}^k (\bar{x}^s-\bar{x}^{s-1})\\
&\textstyle
= \frac{1}{N} \1^\top  \bs \sigma^0 + \sum_{s=1}^k \frac{1}{N} \1^\top (\bs{x}^s-\bs{x}^{s-1}).
\label{eq:RewriteS0}
\end{align}
From equations \eqref{eq:sigTelescop_W} and  \eqref{eq:RewriteS0}, we have the following:
\begin{align} 
\nonumber
\textstyle
& \| W(k) \bs \sigma^k - \bar x^k   \1 \| \textstyle  \\[.3em]
\nonumber
&  \textstyle  \qquad = \| (\Psi(k,0)-\frac{1}{N}\1 \1^\top ) \bs \sigma^{0}\\
\nonumber
\textstyle
&\textstyle  \qquad \quad + \sum_{s=1}^k (\Psi(k,s)-\frac{1}{N}\1 \1^\top )(\bs{x}^s-\bs{x}^{s-1})\|\\
\nonumber
& \textstyle
 \qquad \overset{(\text{a})}{\leq} \| \Psi(k,0)-\frac{1}{N}\1 \1^\top \| \| \bs \sigma^{0} \|
 \\
 \nonumber
 & \textstyle \qquad \quad 
+ \sum_{s=1}^k \| \Psi(k,s)-\frac{1}{N}\1 \1^\top \| \| \bs{x}^s-\bs{x}^{s-1}\|\\
& \qquad  \overset{(\text{b})}{\leq} \textstyle
\theta \rho^k \| \bs \sigma^{0} \| + \sum_{s=1}^k \theta \rho^{k-s}\| \bs{x}^s-\bs{x}^{s-1} \|,
\label{eq:TelescopUB}
\end{align}
\blue{where (a) follows from the Cauchy–Schwarz inequality, while
(b) since $\|\Psi(k,s) -\frac{1}{N} \1 \1^\top \| \leq \theta \rho^{k-s}$ for all $k \geq s \geq 0$, by Lemma \ref{lem:Tsi}.}
Next, we find an upper bound for $\|\bs{x}^s-\bs{x}^{s-1} \|$ in \eqref{eq:TelescopUB}. The update of the decisions $x_i$'s can be written in a compact form as $\bs x^{k+1} = \bs x^{k} + \gamma^k(\tilde{\bs x}^{k} - \bs x^{k})$.
We note that $\tilde {x}_i^k, x_i^k \in  \Omega_i$, for all $k \ge 0$ since $\tilde x_i^k$ is obtained by projecting onto $\Omega_i$ and $x_i^k = (1-\gamma^k)x_i^{k-1} + \gamma^k \tilde x_i^k $ is a convex combination of elements of the convex set $\Omega_i$.
Since all the sets $ \Omega_i$'s are compact, by Assumption \ref{ass:ConvCompSet}, it follows that for some constant $B_{\Omega}$, we have
\begin{align}
\label{eq:UBx}
\|\bs{x}^s-\bs{x}^{s-1} \| &= \gamma^{s-1} \| \tilde{\bs{x}}^{s-1}-\bs{x}^{s-1}\|  \leq \gamma^{s-1} B_{\Omega}.
\end{align}
By combining \eqref{eq:UBx} and \eqref{eq:TelescopUB}, we obtain
\begin{align} \nonumber \textstyle
\| W(k) \bs \sigma^k - \bar x^k   \1 \| &\textstyle \leq
\theta \rho^k B_{\Omega} + \sum_{s=1}^k \theta \rho^{k-s} \gamma^{s-1} B_{\Omega},
\end{align}
where we exploited the initialization step of Algorithm 3, i.e., $\bs \sigma^0 = \bs x^0 \in \bs \Omega$, from which $ \| \bs \sigma^{0} \| \leq B_{\Omega}$.

\smallskip
(ii): The update of the estimates $z_i$'s in Algorithm 3 can be written in a compact form as
\begin{align} \label{eq:lamComp}
\bs z^{k+1} = W(k) \bs z^k + \bs \lambda^{k+1}- \bs \lambda^k.
\end{align}
By telescoping \eqref{eq:lamComp}, we obtain
\begin{align} 
\nonumber
\textstyle
& \| W(k) \bs z^k - \bar \lambda^k   \1 \| \textstyle  \\
& \qquad  \leq \textstyle
\theta \rho^k \| \bs z^{0} \| + \sum_{s=1}^k \theta \rho^{k-s}\| \bs{\lambda}^s-\bs{\lambda}^{s-1} \|,
\label{eq:TelescopUB_lam}
\end{align}
\blue{To upper bound $\|\bs{\lambda}^s-\bs{\lambda}^{s-1} \| $, we note that the dual update in Alg. 3 reads in compact form as $\bs \lambda^{s} = \bs \lambda^{s-1} + \gamma^s (\tilde {\bs \lambda}^{s-1}-\bs \lambda^{s-1})$ and that the dual sequence $(\bs{\lambda}^s)_{s \in \bb N}$ is positive and $B_D-$norm bounded, by Assumption \ref{ass:BDV}. Hence, we have $\|\bs{\lambda}^s-\bs{\lambda}^{s-1} \| \leq \gamma^{s-1} B_D
$, that substituted into \eqref{eq:TelescopUB_lam} gives}
\begin{align} 
\nonumber
\textstyle
& \| W(k) \bs z^k - \bar \lambda^k   \1 \| \textstyle \leq \textstyle
\theta \rho^k B_D + \sum_{s=1}^k \theta \rho^{k-s} \gamma^{s-1} B_D.
\end{align}

(iii): The update of the estimates $y_i$'s in Algorithm 3 can be written in a compact form as
\begin{multline}
\bs y^{k+1} = W(k) \bs y^k + C_{\text d}(  2\tilde{\bs x}^k - \bs x^k)\\
\label{eq:yComp}
- C_{\text d}(  2 \tilde{\bs x}^{k-1} - \bs x^{k-1}),
\end{multline}
By telescoping \eqref{eq:yComp} \blue{(as explained in \eqref{eq:sigTelescop})}, we obtain
\begin{multline}
\textstyle
\bs y^{k+1} = \Psi(k,0)\bs y^{0} + \sum_{s=1}^k \Psi(k,s) \\
\textstyle
\quad \cdot \big( C_{\text d}(2\tilde{\bs x}^{s-1}  - \bs x^{s-1}) -C_{\text d}(2\tilde{\bs x}^{s-2} - \bs x^{s-2})\big) \\
+ C_{\text d}(2\tilde{\bs x}^{k} - \bs x^{k}) - C_{\text d}(2\tilde{\bs x}^{k-1} - \bs x^{k-1}).
\label{eq:Telescop_D}
\end{multline}
Now, consider $\bar y^k$, which may be written as follows:
\begin{equation*} \textstyle
\bar y^k =	\bar{y}^0 + \sum_{s=1}^{k}\{\bar y^{s-1}-\bar y^{s-2}\} + \bar y^{k} - \bar{y}^{k-1}.
\end{equation*}
By Lemma \ref{lem:ESTeRAV}, we have that $\bar{y}^s = \bar{d}^s = \frac{1}{N} \1^\top  C_{\text d}(2  \tilde{\bs x}^s - \bs x^s  )- c $, for all $s \geq 0 $, which leads to
\begin{align} \nonumber
\textstyle
\bar d^k = \bar y^k 
 & =  \textstyle \frac{1}{N} \1^\top  \bs y^0 +  \sum_{s=1}^k \frac{1}{N}  \1^\top \\ 
& \quad \textstyle \nonumber
\cdot \big( C_{\text d}(2  \tilde{\bs x}^{s-1} - \bs x^{s-1}  )- C_{\text d}(2  \tilde{\bs x}^{s-2} - \bs x^{s-2} )\big)\\
& \quad \textstyle
+ \frac{1}{N} \1^\top \big( C_{\text d}(2  \tilde{\bs x}^{k} - \bs x^{k}  )- C_{\text d}(2  \tilde{\bs x}^{k-1} - \bs x^{k-1} ) \big).
\label{eq:RewriteD0}
\end{align}
From the relations \eqref{eq:Telescop_D} and  \eqref{eq:RewriteD0}, we have the following:
\begin{align} 
\nonumber
\textstyle
& \| \bs y^{k+1} - \bar d^k   \1 \| \textstyle  \\
\nonumber
&  \textstyle  \quad \overset{(\text{a})}{=} \| (\Psi(k,0)-\frac{1}{N}\1 \1^\top ) \bs y^{0} + \sum_{s=1}^k (\Psi(k,s)-\frac{1}{N}\1 \1^\top )\\
&  \textstyle \nonumber \quad \quad \cdot
\big( C_{\text d}(2  \tilde{\bs x}^{s-1} - \bs x^{s-1}  )- C_{\text d}(2  \tilde{\bs x}^{s-2} - \bs x^{s-2} )\big) \\
&\nonumber \textstyle  \quad \quad +
(I_N-\frac{1}{N} \1 \1^\top)\\
&  \textstyle \nonumber \quad \quad \cdot 
\big( C_{\text d}(2  \tilde{\bs x}^{k} - \bs x^{k}  )- C_{\text d}(2  \tilde{\bs x}^{k-1} - \bs x^{k-1} )\big)
\|\\
\nonumber
& \textstyle
 \quad \overset{(\text{b})}{\leq} \| \Psi(k,0)-\frac{1}{N}\1 \1^\top \| \| \bs y^{0} \|
 \\
 \nonumber
 & \textstyle \quad \quad 
+ \sum_{s=1}^k \| \Psi(k,s)-\frac{1}{N}\1 \1^\top \| \|C_{\text d} \| \\
& \nonumber \textstyle \quad \quad \cdot
\| (2  \tilde{\bs x}^{s-1} - \bs x^{s-1}  )- (2  \tilde{\bs x}^{s-2} - \bs x^{s-2} )\|\\
\nonumber
& \textstyle \quad \quad +
 \|C_{\text d} \| \| (2  \tilde{\bs x}^{k} - \bs x^{k}  )- (2  \tilde{\bs x}^{k-1} - \bs x^{k-1} )\|\\
\nonumber
& \quad  \overset{(\text{c})}{\leq} \textstyle
\theta \rho^k B_Y + \sum_{s=1}^k \theta \rho^{k-s}\\
& \nonumber \textstyle \quad \quad \cdot
 \|C_{\text d} \| \| (2  \tilde{\bs x}^{s-1} - \bs x^{s-1}  )- (2  \tilde{\bs x}^{s-2} - \bs x^{s-2} )\|\\
& \textstyle \quad \quad +
  \|C_{\text d} \| \| (2  \tilde{\bs x}^{k} - \bs x^{k}  )- (2  \tilde{\bs x}^{k-1} - \bs x^{k-1} )\|,
\label{eq:TelescopUB_2}
\end{align}
\blue{where the first equality, (a), follows by substituting \eqref{eq:Telescop_D} and  \eqref{eq:RewriteD0} to $ \bs y^{k+1} $ and $\bar d^k$, respectively, (b) follows from Cauchy--Schwartz inequality and (c) since $\|\Psi(k,s) -\frac{1}{N} \1 \1^\top \| \leq \theta \rho^{k-s}$ for all $k \geq s \geq 0$, by Lemma \ref{lem:Tsi}.}
Now we build an upper bound for $\| (2  \tilde{\bs x}^{s} - \bs x^{s}  )- (2  \tilde{\bs x}^{s-1} - \bs x^{s-1} )\|$ in \eqref{eq:TelescopUB_2}:
\begin{align} \nonumber
\| (2  \tilde{\bs x}^{s} - \bs x^{s}  ) &- (2  \tilde{\bs x}^{s-1} - \bs x^{s-1} )\| \\
\nonumber
&\overset{(\text{a})}{\leq} 2\|\tilde{\bs x}^{s}- \tilde{\bs x}^{s-1} \| + \|\bs x^{s}  -\bs x^{s-1}   \|\\
& \overset{(\text{b})}{\leq}
2\|\tilde{\bs x}^{s}- \tilde{\bs x}^{s-1} \| + \gamma^{s-1}B_\Omega,
\label{eq:tildeDiff}
\end{align}
\blue{where (a) follows from the triangular inequality and (b) follows from \eqref{eq:UBx}
}
Next, we build an upper bound for the term $\|\tilde{\bs x}^{s}- \tilde{\bs x}^{s-1} \|$ in the right hand side of \eqref{eq:tildeDiff}.
\begin{align} \textstyle \nonumber
& \|  \tilde{\bs x}^{s}- \tilde{\bs x}^{s-1} \| \\
& \nonumber
\overset{(\text{a})}{=} \| 
\proj_{\bs \Omega}\big( \bs x^{s} -  \alpha_{\text d} (  \bs F(\bs x^{s},\hat{\bs \sigma}^{s} ) + C_{\text d}^\top \hat{\bs z}^{s} \big)  \\
\nonumber
& \qquad - \proj_{\bs \Omega}\big( \bs x^{s-1} -  \alpha_{\text d} (  \bs F(\bs x^{s-1},\hat{\bs \sigma}^{{s-1}} ) + C_{\text d}^\top \hat{\bs z}^{{s-1}} \big)
\|\\
& \overset{(\text{b})}{\leq} \textstyle \nonumber
\| \bs x^{s} - \bs x^{s-1} -\alpha_{\text d} \\
\nonumber
& \quad \textstyle \cdot \big(
\bs{F}(\bs x^{s}, W(s)\bs \sigma^{s}) - 
\bs{F}(\bs x^{s-1}, W(s\!-\!1)\bs \sigma^{s-1}) \\
\nonumber
& \quad
+ {C}_\text{d}^\top W(s) \bs z^{s} - {C}_\text{d}^\top W(s\!-\!1) \bs z^{s-1} \big) \| \\
\nonumber
&  \overset{(\text{c})}{\leq}
\| \bs x^{s} - \bs x^{s-1} \|  \\
\nonumber
& \quad 
+ L_{\bs{F}} \| {\alpha}_\text{d} \|   \,\|
\begin{bmatrix}
\bs x^{s} - \bs x^{s-1} \\
\nonumber
 W(s)\bs \sigma^{s} - W(s-1)\bs \sigma^{s-1}
\end{bmatrix}
\| \\
\nonumber
& \quad
+ \| \alpha_\text{d} \| \|C_\text{d} \|
 \|W(s) \bs z^{s} - W(s-1) \bs z^{s-1} \| \\
 \nonumber
 &  \overset{(\text{d})}{\leq}
 (1+L_{\bs{F}} \| \alpha_{\text d} \|)  \|\bs x^{s}  -  \bs x^{s-1} \|\\
 \nonumber
 &\quad + L_{\bs{F}} \|\alpha_{\text d} \|  \|W(s)\bs \sigma^{s} - W(s\!-\!1)\bs \sigma^{s-1} \| \\
 &\quad + \| \alpha_{\text d} \| \|C_\text{d} \|
 \|W(s) \bs z^{s} - W(s\!-\!1) \bs z^{s-1} \|,
 \label{eq:W1W2}
\end{align}
\blue{
where (a) follows by exploiting the compact update of $\tilde{\bs x}^s$ in \eqref{eq:ALG3compactX_alg3}, (b) follows by the nonexpansiveness of the projection operator, (c) follows by exploiting, in sequence, the triangular inequality, the Lipschitz continuity of $\bs F$ (Assumption \ref{ass:LC-TF}), and the Cauchy-Schwartz inequality, finally (d) follows from the relation $\|
\left[
\begin{smallmatrix}
a \\ b
\end{smallmatrix}
\right]
\|
=
\sqrt{\|a\|^2 + \|b \|^2} \leq \|a \|+\| b\|$.
}
Now, we find an upper bound the last two terms in \eqref{eq:W1W2}.
\begin{align} \nonumber
& \|W(s)\bs \sigma^{s} - W(s\!-\!1)\bs \sigma^{s-1} \| \\
\nonumber
& \overset{(\text{a})}{=} \|W(s)\bs \sigma^{s} -  \bs \sigma^{s} + \bs x^{s} - \bs x^{s-1} \| \\
\nonumber
& \overset{(\text{b})}{\leq} \| W(s)\bs \sigma^{s} -  \bs \sigma^{s} \| + \| \bs x^{s} - \bs x^{s-1}  \|\\
\nonumber
& \overset{(\text{c})}{=} \| W(s)\bs \sigma^{s} -  \1 \bar x^{s}  - (  \bs \sigma^{s} -  \1 \bar x^{s}) \|\\
\nonumber
& \quad + \| \bs x^{s} - \bs x^{s-1}  \|\\
\nonumber
& \overset{(\text{d})}{\leq} \| W(s)\bs \sigma^{s} - \1 \bar x^{s} \| + \| \bs \sigma^{s} - \1 \bar x^{s}  \|\\
\nonumber
& \quad  + \gamma^{s-1} B_{\Omega}\\
\nonumber
& \textstyle \overset{(\text{e})}{\leq} \theta B_{\Omega} \rho^{s}  + \theta B_{\Omega} \sum_{\ell=1}^{s}  \rho^{s-\ell} \gamma^{\ell-1}\\
\nonumber
&  \textstyle  \quad + \theta B_{\Omega} \rho^{s-1}  + \theta B_{\Omega} \sum_{\ell=1}^{s-1}  \rho^{(s-1)-\ell} \gamma^{\ell-1} +  \gamma^{s-1} B_\Omega\\
\nonumber
& \quad + \gamma^{s-1} B_{\Omega}\\
& \textstyle \overset{(\text{f})}{\leq} 2 \left(\theta B_{\Omega} \rho^{s-1} \right) + 4 \left( \theta B_{\Omega} \rho^{-1} \sum_{\ell=1}^{s}  \rho^{s-\ell} \gamma^{\ell-1} \right),
\label{eq:WsigUB}
\end{align}
\blue{
where (a) follows since $W(s\!-\!1)\bs \sigma^{s-1}= \bs \sigma^{s}- \bs x^{s}+\bs x^{s-1}$ by \eqref{eq:sigComp}, (b) from the triangular inequality, (c)  by summing and subtracting $ \bar{x}^{s}\1 $ within the fist term, (d) by the triangular inequality and substituting to $\| \bs x^{s} - \bs x^{s-1}  \|$ the bound in \eqref{eq:UBx}, (e) by substituting to $\| W(s)\bs \sigma^{s} - \1 \bar x^{s} \|$ the upper bound derived in Lemma \ref{lem:SurrToReal} (i) and to $\| \bs \sigma^{s} - \1 \bar x^{s} \|$ a bound similarly derived, (f) follows by noticing that
$\rho^{s} < \rho^{s-1}$, since $0 < \rho < 1$ by Assumption \ref{ass:DSMM}. 
}
Similarly, for the last addend in \eqref{eq:W1W2}, we can derive the following bound:
\begin{multline}
\|W(s)\bs z^{s} - W(s\!-\!1)\bs z^{s-1} \|  \\
\textstyle
\leq 2 \theta B_{D} \rho^{s-1}  + 4 \theta B_{D}\rho^{-1} \sum_{\ell=1}^{s}  \rho^{s-\ell} \gamma^{\ell-1}.
 \label{eq:WzUB}
\end{multline}
Finally, by combining \eqref{eq:W1W2} with \eqref{eq:WsigUB} and \eqref{eq:WzUB}, we obtain an upper bound for $\|\tilde{\bs x}^{s}- \tilde{\bs x}^{s-1} \|$, i.e.,
\begin{align} \nonumber
& \| \tilde{\bs x}^{s}- \tilde{\bs x}^{s-1} \| \\
&
\leq 
 \nonumber \textstyle
\underbrace{\|  \alpha_{\text d} \| 2 \theta (L_{\bs{F}} B_{\Omega} + \|C_{\text d}\|B_D)}_{:=\epsilon_1} \rho^{s-1}  \\
& \quad \textstyle \nonumber + \underbrace{  4 \theta \rho^{-1} \| \alpha_{\text d} \| (L_{\bs{F}} B_{\Omega} + \|C_{\text d}\|B_D)}_{:=\epsilon_2} \sum_{\ell=1}^{s}  \rho^{s-\ell} \gamma^{\ell-1}\\
& \nonumber \quad +
\underbrace{(B_\Omega +    \| \alpha_{\text d} \|  L_{\bs F} B_\Omega)}_{:=\epsilon_3} \gamma^{s-1}\\
& \leq 
\textstyle
\epsilon_1 \rho^{s-1}  +(\epsilon_2 + \epsilon_3) \sum_{\ell=1}^{s}  \rho^{s-\ell} \gamma^{\ell-1}.
\label{finUBTild}
\end{align}
Now, by substituting \eqref{finUBTild} into \eqref{eq:tildeDiff}, we obtain
\begin{align} \textstyle \nonumber
&\| (2  \tilde{\bs x}^{s} - \bs x^{s}  ) - (2  \tilde{\bs x}^{s-1} - \bs x^{s-1} )\| 
 \\
 \nonumber
&\textstyle \leq 2 \epsilon_1 \rho^{s-2} +2(\epsilon_2 + \epsilon_3) \sum_{\ell=1}^{s}  \rho^{s-\ell} \gamma^{\ell-1} + \gamma^{s-1} B_\Omega\\
\nonumber
& \leq \textstyle  \underbrace{2 \epsilon_1}_{:=\delta_1} \rho^{s-1} +\underbrace{(2 \epsilon_2 + 2 \epsilon_3 + B_\Omega)}_{:=\delta_2} \sum_{\ell=1}^{s}  \rho^{s-\ell} \gamma^{\ell-1} \\
& =: \phi^{s},
\label{eq:phiSeq}
\end{align}
where $(\phi^s)_{s \in \bb N}$ is the scalar  (vanishing) sequence in \eqref{eq:auxSeq}, with $\rho$ is as in \eqref{eq:rho} and $(\gamma^k)_{k \in \bb N}$ as in Assumption \ref{ass:van-ss}.
Finally, by combining \eqref{eq:phiSeq} and \eqref{eq:TelescopUB_2}, we obtain the upper bound in Lemma \ref{lem:SurrToReal} (iii).
{\hfill $\blacksquare$}

\subsection{Proof of Lemma \ref{lem:BoundErr}}
\label{proof:lem_BoundErr}

From $\|
\left[
\begin{smallmatrix}
a \\ b
\end{smallmatrix}
\right]
\|
=
\sqrt{\|a\|^2 + \|b \|^2} \leq \|a \|+\| b\|$, it follows that 
\begin{align} \nonumber
\| e^k \| & = \| \col(\tilde{\bs x}^k, \tilde{\bs \lambda}^k) - \col(\tilde{\bs x}^k_{\text{A2}}, \tilde{\bs \lambda}^k_{\text{A2}}) \| \\
& \leq \|\tilde{\bs x}^k - \tilde{\bs x}^k_{\text{A2}}\| + \|\tilde{\bs \lambda}^k- \tilde{\bs \lambda}^k_{\text{A2}}\|.
\label{eq:plbounderr}
\end{align}

Next, we upper bound $\|\tilde{\bs x}^k - \tilde{\bs x}^k_{\text{A2}}\|$, where $\tilde{\bs x}^k$ and $\tilde{\bs x}^k_{\text{A2}}$ are defined in \eqref{eq:ALG3compactX_alg3} and \eqref{eq:CFalg2_x}, respectively.
\begin{align} \nonumber
& \|\tilde{\bs x}^k - \tilde{\bs x}^k_{\text{A2}}\|\\[.3em]
\nonumber
& \nonumber
= \| 
\proj_{\bs \Omega} \big( \bs x^{k} -  \alpha_{\text d} (  \bs F(\bs x^{k},\hat{\bs \sigma}^{k} ) + C_{\text d}^\top \hat{\bs z}^{k} \big)  \\
\nonumber
&   \qquad 
- \proj_{\bs \Omega} \big( \bs x^{k} -  \alpha_{\text d} ( \bs F( \bs x^{k},\bar x^{k} \1 ) + C_{\text d}^\top \bar \lambda^{k} \big)
\| \\
\nonumber
& \overset{(\text{a})}{\leq} \|\alpha_{\text d} \| \, 
\| \bs{F}(\bs x^k, \hat{\bs \sigma}^k) - \bs{F}(\bs x^k, \bar{\bs x}^k)
+ C_d^\top (\hat{\bs z}^k - \hat{\bs \lambda}^k) \| \\
\nonumber
& \overset{(\text{b})}{\leq}  L_{\bs{F}}  \| \alpha_{\text d} \| \|(W(k)\otimes I_n) \bs \sigma^k -  \1 \otimes \bar{x}^k \| \\
& \quad  + \| \alpha_{\text d} \| \|C_d \| \|(W(k)\otimes I_m) \bs z^k -  \1 \otimes \bar{\lambda}^k \|,
\label{eq:bXtilde}
\end{align}
\blue{ where the first inequality (a) follows by the nonexpansivity of the projection operator, and (b) follows by the triangular inequality and the Lipschitz continuity of $\bs{F}$ (Assumption \ref{ass:LC-TF}).}

Now, consider $\|\tilde{\bs \lambda}^k- \tilde{\bs \lambda}^k_{\text{A2}}\|$, where $\tilde{\bs \lambda}^k$ and $\tilde{\bs \lambda}^k_{\text{A2}}$ are defined in \eqref{eq:ALG3compact3_lam} and \eqref{eq:CFalg2_lam}, respectively. By exploiting the nonexpansiveness of the projection operator, we have
\begin{align} \nonumber
\|\tilde{\bs \lambda}^k - \tilde{\bs \lambda}^k_{\text{A2}}\|
\nonumber
& \leq 
\| \beta_{\text d} \|  \|(W(k)\otimes I_m) \bs z^k -  \1 \otimes \bar{\lambda}^k \| \\
& \quad + \| \beta_{\text d} \| \| \bs y^{k+1}-  \1 \otimes \bar{d}^k \|  .
\label{eq:bLambdaTilde}
\end{align}

Finally, by combining \eqref{eq:bLambdaTilde} and \eqref{eq:bXtilde} with \eqref{eq:plbounderr} we obtain the upper bound in Lemma \ref{lem:BoundErr}.
{\hfill $\blacksquare$}

\subsection{Proof of Lemma \ref{lem:summ-Err}}
\label{proof:lem:summ-Err}
By substituting the bounds on the estimation errors of Lemma \ref{lem:SurrToReal} into
the error bound in Lemma \ref{lem:BoundErr}, we obtain
\begin{align} \textstyle \nonumber
\gamma^k \| e^k \| & \leq 
 a_1 \underbrace{\gamma^k \rho^k}_{\text{Term 1}}  + a_2 \underbrace{\gamma^k \sum_{s=1}^k \rho^{k-s} \gamma^{s-1}}_{\text{Term 2}} \\
&  \quad + a_3 \underbrace{\gamma^k \phi^k}_{\text{Term 3}} + a_4 \underbrace{\gamma^k \sum_{s=1}^k \rho^{k-s} \phi^{s-1}}_{\text{Term} 4} ,
\label{eq:gamEk}
\end{align}
where $a_1$, $a_2$, $a_3$ and $a_4$ are positive constants defined as $a_1 := \theta B_\Omega (\|\alpha_{\text d} \| L_{\bs{F}} + \| \beta_{\text d} \|) + \theta B_D (\| \alpha_{\text d}\| \|C_d \| + \| \beta_{\text d}\|)$,
$a_2 := \theta B_{\Omega} \| \alpha_{\text d}\| L_{\bs{F}}  + \theta B_D (\| \alpha_{\text d}\| \|C_d \| + \| \beta_{\text d}\|)$, $a_3 := \|\beta_{\text d}\| \|C_d \|$ and $a_4 := \|\beta_{\text d} \|$.
Now, we show that each term on the right-hand side of \eqref{eq:gamEk} is summable, hence also the sequence $(\gamma^k \| e^k \|)_{k \in \bb N}$ is such, i.e., $\sum_{k=0}^{\infty} \gamma^k \| e^k \| < \infty$.

\smallskip
\textrm{Term 1}: To establish the convergence of $\sum_{k=0}^\infty \gamma^k \rho^k $, we note that $\gamma^k \leq \gamma^0$, for all $ k \in \bb N$, by Assumption \ref{ass:van-ss}, implying that $\sum_{k=0}^\infty \gamma^k \rho^k \leq \gamma^0 \sum_{k=0}^\infty \rho^k < \infty$, since $0 < \rho < 1$ by Lemma \ref{lem:Tsi}.

\smallskip
Term 2: Since $\gamma^k \le \gamma^{s-1}$, for all $k \ge s\!-\!1$ (Assumption \ref{ass:van-ss}), the following relations hold for the second term in the right-hand side of \eqref{eq:gamEk}:
\begin{align*} 
\sum_{k=0}^\infty  \gamma^k \left( \sum_{s = 1}^k \rho^{k-s } \gamma^{s-1} \right) & = \sum_{k=0}^\infty  \sum_{s = 1}^k \rho^{k-s } \gamma^k \gamma^{s-1} \\
\textstyle
&  \leq \sum_{k=0}^\infty \sum_{s = 1}^k \rho^{k-s } (\gamma^{s-1})^2.
\end{align*}
It follows by Lemma \ref{lem:ram} (b) that $\sum_{k=0}^\infty \sum_{s = 1}^k \rho^{k-s } (\gamma^{s-1})^2 < \infty$, since $\sum_{k=0}^\infty (\gamma^k)^2 < \infty$, $ \gamma^k \geq 0$ for all $k$ (Assumption \ref{ass:van-ss}) and $0 <\rho<1$.

\smallskip
Term 3: By exploting the definition of the sequence $(\phi^k)_{k \in \bb N}$ in Lemma \ref{lem:SurrToReal}, we can write
\begin{align*}
\sum_{k=0}^\infty \gamma^k \phi^k &= 
\sum_{k=0}^\infty \gamma^k \left( \delta_1 \rho^{k-1} + \delta_2 \sum_{\ell=1}^k \rho^{k-\ell} \gamma^{\ell - 1} \right) \\
& = \delta_1  \sum_{k=0}^\infty \gamma^k  \rho^{k-1}  + \delta_2  \sum_{k=0}^\infty \gamma^k \sum_{\ell=1}^k \rho^{k-\ell} \gamma^{\ell - 1}\\
& \leq \delta_1 \gamma^0 \sum_{k=0}^\infty  \rho^{k-1}  + \delta_2  \sum_{k=0}^\infty \sum_{\ell=1}^k \rho^{k-\ell} (\gamma^{\ell - 1})^2
\end{align*}
By exploiting the same technical reasoning in (i) and (ii), we can show that each term on the right-hand side of the previous inequality globally converges. Therefore, we conclude that $\sum_{k=0}^\infty \gamma^k \phi^k < \infty$.
\smallskip

Term 4: Since $\gamma^k \le \gamma^s$, for all $k \ge s$ (Assumption \ref{ass:van-ss}), the following hold for the last term in the right-hand side of \eqref{eq:gamEk}:
\begin{align*} 
\sum_{k=0}^\infty \gamma^k \left( \sum_{s = 1}^k \rho^{k-s } \phi^{s-1} \right) & = \sum_{k=0}^\infty \sum_{s = 1}^k \rho^{k-s } \gamma^k \phi^{s-1} \\
\textstyle
&  \leq  \sum_{k=0}^\infty \sum_{s = 1}^k \rho^{k-s } (\gamma^{s-1} \phi^{s-1}).
\end{align*}
It follows by Lemma \ref{lem:ram} (b) that $\sum_{k=0}^\infty \sum_{s = 1}^k \rho^{k-s } \gamma^{s-1} \phi^{s-1} < \infty$, since $\sum_{k=0}^\infty \gamma^{k} \phi^{k} < \infty$ by (iii), and $0 <\rho<1$.

\smallskip
\noindent
To conclude, since all the terms in the right-hand side of \eqref{eq:gamEk} are summable, then we have $\sum_{k=0}^\infty \gamma^k \| e^k \| < \infty$.
{\hfill $\blacksquare$}

\balance
\bibliographystyle{IEEEtran}
\bibliography{IEEEfull,library}

\end{document}